\documentclass[a4paper]{amsart}
\usepackage{url}
\usepackage{amsmath,amssymb,amsfonts,amsthm,bbm}
\usepackage[shortlabels]{enumitem}
\usepackage{todonotes}
\usepackage{algorithmic}
\usepackage[pdftex]{hyperref}
\usepackage[T1]{fontenc}

\usepackage{graphicx}

\usepackage{color}%MSeiss

\renewcommand{\AA}{\mathbbm{A}}
\newcommand{\av}{\vec{a}}

\newcommand{\bv}{\vec{b}}
\newcommand{\cv}{\vec{c}}
\newcommand{\dv}{\vec{d}}
\newcommand{\C}{\mathcal{C}}
\newcommand{\CC}{\mathbbm{C}}
\newcommand{\D}{\mathcal{D}}

\newcommand{\E}{\mathcal{E}}
\newcommand{\ev}{\vec{e}}
\newcommand{\F}{\mathcal{F}}
\newcommand{\fv}{\vec{f}}

\renewcommand{\H}{\mathcal{H}}
\newcommand{\I}{\mathcal{I}}
\newcommand{\id}[1]{\mathrm{id}_{#1}}
\DeclareMathOperator{\init}{init}
\DeclareMathOperator{\im}{im}
\newcommand{\J}{\mathcal{J}}
\newcommand{\Jc}[1]{\J_{#1}}
\newcommand{\Jcp}[2]{\J_{#1}^{(#2)}}

\newcommand{\Jhc}[1]{\hat{\J}_{#1}}
\newcommand{\je}[1]{J_{#1}{\pi}}
\newcommand{\K}{\mathcal{K}}
\DeclareMathOperator{\ld}{ld}
\newcommand{\M}{\mathcal{M}}
\newcommand{\N}{\mathcal{N}}
\newcommand{\Nc}[2]{\N_{#2}[\Jc{#1}]}
\newcommand{\NN}{\mathbbm{N}}
\newcommand{\Nn}{\NN_0^n}
\renewcommand{\O}{\mathcal{O}}
\DeclareMathOperator{\ord}{ord}
\renewcommand{\P}{\mathcal{P}}
\DeclareMathOperator{\pp}{pp}
\newcommand{\R}{\mathcal{R}}
\newcommand{\Rc}[1]{\R_{#1}}

\DeclareMathOperator{\spt}{sep}
\DeclareMathOperator{\Sol}{Sol}

\newcommand{\U}{\mathcal{U}}
\newcommand{\uv}{\vec{u}}
\newcommand{\vv}{\vec{V}}
\newcommand{\JJ}{\vec{J}}
\newcommand{\V}{\mathcal{V}}
\renewcommand{\vec}[1]{\mathbf{#1}}

\newcommand{\X}{\mathcal{X}}
\newcommand{\xv}{\vec{x}}

\newtheorem{theorem}{Theorem}[section]
\newtheorem{proposition}[theorem]{Proposition}
\newtheorem{lemma}[theorem]{Lemma}
\newtheorem{corollary}[theorem]{Corollary}
\theoremstyle{definition}
\newtheorem{definition}[theorem]{Definition}
\newtheorem{example}[theorem]{Example}
\theoremstyle{remark}
\newtheorem{remark}[theorem]{Remark}
\newtheorem{algorithm}[theorem]{Algorithm}

\begin{document}

\title{Singularities of Algebraic Differential Equations}
\author[M. Lange-Hegermann]{Markus Lange-Hegermann}
\address{Technische Hochschule Ostwestfalen Lippe - University of
  Applied Sciences and Arts, 32657 Lemgo, Germany} 
\email{markus.lange.hegermann@th-owl.de}
\author[D. Robertz]{Daniel Robertz}
\address{School of Engineering, Computing and Mathematics,
  University of Plymouth, 2-5 Kirkby Place, Drake Circus, Plymouth PL4 8AA, 
  United Kingdom} 
\email{daniel.robertz@plymouth.ac.uk}
\author[W.M. Seiler]{Werner M. Seiler}
\address{Institut f\"ur Mathematik, Universit\"at Kassel, 34109
  Kassel, Germany}
\email{seiler@mathematik.uni-kassel.de}
\author[M. Sei\ss]{Matthias Sei\ss}
\address{Institut f\"ur Mathematik, Universit\"at Kassel, 34109
  Kassel, Germany}
\email{mseiss@mathematik.uni-kassel.de}
\thanks{The work of the last two authors was partially supported by the
bilateral project ANR-17-CE40-0036 and DFG-391322026 SYMBIONT}

\subjclass[2020]{Primary 34A09, 35A21; Secondary 12H05, 13P10, 34M35, 57R45, 68W30}

\keywords{algebraic differential equation, algebraic singularity,
  geometric singularity, regularity decomposition, Thomas
  decomposition, Vessiot distribution, differential ideal}
  
\begin{abstract}
  There exists a well established differential topological theory of
  singularities of ordinary differential equations.  It has mainly studied
  scalar equations of low order.  We propose an extension of the key
  concepts to arbitrary systems of ordinary or partial differential
  equations.  Furthermore, we show how a combination of this geometric
  theory with (differential) algebraic tools allows us to make parts of the
  theory algorithmic.  Our three main results are firstly a proof that even
  in the case of partial differential equations regular points are generic.
  Secondly, we present an algorithm for the effective detection of all
  singularities at a given order or, more precisely, for the determination
  of a regularity decomposition.  Finally, we give a rigorous definition of
  a regular differential equation, a notoriously difficult notion
  ubiquitous in the geometric theory of differential equations, and show
  that our algorithm extracts from each prime component a regular
  differential equation.  Our main tools are on the one hand the algebraic
  resp.\ differential Thomas decomposition and on the other hand the
  Vessiot theory of differential equations.
\end{abstract}

\maketitle

\section{Introduction}

Many different forms of singular behaviour appear in the context of
differential equations and many different views have been developed for
them.  Most of them are related to singularities of individual solutions of
a given differential equation like blow-ups or shocks, i.\,e.\ either a
solution component or some derivative of it becomes infinite.  By contrast,
we will be concerned with singularities of the differential equation
itself.  Using the geometric theory of differential equations
\cite{sau:jet,wms:invol}, (systems of) differential equations are
identified with subsets of suitable jet bundles and singularities are
special points on these subsets.

Within differential topology, singularities of smooth maps between
manifolds \cite{agv:sing1,gg:stable} have been much studied.  The
\emph{geometric singularities} of differential equations, which is our main
topic here, may be viewed as a special case (overviews over some basic
results can be found in \cite{via:geoode} or \cite{aor:poincare}).  The
main emphasis in the literature has been on the classification of
singularities (see e.\,g.\ \cite{diis:gensing}) and on the construction of
local normal forms for them.  Of course, such questions can be reasonably
treated only in sufficiently small dimensions and hence most works consider
only scalar ordinary differential equations of first or second order.  With
similar techniques, singularities of solutions of partial differential
equations have been studied e.\,g.\ in \cite{ag:whitney,vvl:singsol}, but
as already mentioned this represents a different problem.

By contrast, we are concerned with the effective treatment of general
systems of differential equations, i.\,e.\ also of under- or overdetermined
systems of ordinary or partial differential equations.  For this purpose,
we extend the needed concepts from differential topology to systems which
are not of finite type and we combine them with (differential) algebraic
algorithms to make them effective.  Such a combination of geometric and
algebraic approaches to singularities appeared already in the work of
Hubert \cite{eh:degen} on scalar first-order ordinary differential
equations.  However, we cover much more general situations than she did; in
particular, we admit systems, equations of arbitrary order and partial
differential equations.

We concentrate in this work on the definition and the algorithmic detection
of singularities of general differential systems.  The analysis of the
local solution behaviour around a singularity represents a much harder
problem that probably cannot be solved at the same level of generality or
effectivity.  The algebraic techniques employed by us require that we work
over the complex numbers and that we restrict to differential equations
with polynomial nonlinearities.  From the point of view of applications,
the latter restriction is not very serious, as most differential systems
arising in applied sciences are polynomial.

Studying fully nonlinear or implicit systems is not at all straightforward
and we need to address several challenges.  For systems of differential
equations, the corresponding subsets of jet bundles are no longer
hypersurfaces leading to a much more complicated relation between the given
differential system and the surfaces defined by it.  As a further
complication, general systems of differential equations may hide
integrability conditions, which must be exhibited explicitly before
statements about the existence and uniqueness of solutions can be made.
These facts make case distinctions (which are related to the appearance of
singularities) unavoidable.  Furthermore, in the case of partial
differential equations the completion may require to move to higher-order
jet bundles, so that a priori it is not even clear at what order any
further analysis should be performed.

Our approach proceeds in two steps: a differential one and an algebraic
one.  In the first step, we use the \emph{differential Thomas
  decomposition} \cite{th:ds, th:sr} (see \cite{bglr:thomasalg,
  bl:thomasimpl, vpg:decomp, glhr:tdds, dr:habil} for modern treatments) to
split the input system into a finite set of so-called \emph{simple}
differential systems.  Besides the splitting, the differential step also
takes care of the just mentioned problem of hidden integrability
conditions, as it includes a completion procedure.  Each of the arising
simple differential systems is then analysed separately.  This
decomposition also addresses singular integrals, which are automatically
isolated into separate simple systems, whereas the general integral
corresponds to other systems.  However, we do not claim to detect whether a
system corresponds to singular integrals, a problem which seems hard and is
closely connected to the so-called Ritt problem \cite[\textsection
IV.9]{kol:diffalg}.  An alternative to the Thomas decomposition is the
Rosenfeld-Gr\"obner algorithm \cite{blop:09}; the splittings it performs,
however, do not in general result in decompositions of the solution set
into pairwise disjoint subsets.  An elimination method for differential
algebra based on splittings analogous to Thomas' ones was developed by
Seidenberg \cite{seidenberg:elim}.

For the algebraic step, we must first choose a suitably high order in which
we want to analyse the simple differential system.  We associate with the
differential system a polynomial radical ideal in the coordinate ring of
the jet bundle of the chosen order and introduce this way algebraic jet
sets as a geometric model of the differential system
(Definition~\ref{def:ade}).  Over such sets, we study their Vessiot cones
which are fundamental for defining geometric singularities.  Using the
\emph{algebraic Thomas decomposition}, we partition algebraic jet sets with
respect to the behaviour of the Vessiot cones and show that such a
decomposition is equivalent to the identification of all geometric
singularities.  For finding algebraic singularities, we augment this
procedure with a suitable version of the Jacobian criterion from algebraic
geometry.

In the algebraic step, we must study more general situations than usually
considered in the differential topological approach to singularities.
Hence, we extend this approach in several directions.  We provide a more
general definition of geometric singularities that can also handle partial
differential equations (Definition~\ref{def:regsing}).  This requires a
considerably more involved definition taking into account a whole
neighbourhood of the studied point, whereas the classical definitions use
pointwise criteria.  In the case of systems, one can no longer expect that
singularities are isolated points, as it is traditionally done at least for
irregular singularities.  Therefore, we introduce the novel notion of a
\emph{regularity decomposition} of an algebraic jet set
(Definition~\ref{def:regdecomp}) as a partitioning into subsets on which
the relevant geometric structures (the Vessiot and symbol cones) show a
uniform behaviour.

Our first two main results concern these generalisations.
Theorem~\ref{thm:regdense} proves that the regular points form a Zariski
open and dense subset and thus justifies calling the other points singular.
In the situations traditionally considered in differential topology or
analysis, i.\,e.\ for differential equations of finite type, this statement
is fairly trivial.  As we also include equations which are not of finite
type, we must prove the existence of a smooth regular involutive
distribution of the right dimension on some neighbourhood of any regular
point which requires the application of advanced results from the geometric
theory of differential equations.  Our second main result concerns the
existence of regularity decompositions for arbitrary differential systems.
We provide here a constructive proof by providing an explicit algorithm for
the effective construction of such decompositions
(Algorithms~\ref{alg:simpleregularitydecomposition} and
\ref{alg:regularitydecomposition}) and proving its correctness
(Theorem~\ref{thm:algregdecomp}).

Our third and final main result concerns an old problem in the geometric
theory of differential equations.  There one usually considers only
\emph{regular differential equations}.  However, in many cases not even a
precise definition of this term is given and an effective test for
regularity is still unknown to the best of our knowledge, as it involves
considering not only one order, but all orders.  Hence, we first
provide a rigorous definition of this notion within our framework
(Definition~\ref{def:regde}) and then Theorem~\ref{thm:regde} asserts that
our algorithm for the construction of a regularity decomposition
automatically identifies in each irreducible component a Zariski dense
subset that is a regular differential equation.

This article is structured as follows.  In Sections~\ref{sec:cag} and
\ref{sec:cones}, we combine differential algebraic concepts with the
geometric theory of differential equations, leading to algebraic jet sets.
In Section~\ref{sec:singgde} we extend the classical definition of
singularities to arbitrary systems of differential equations, including
partial differential equations, and show that regular points are dense.
The subsequent Section~\ref{sec:regularitydecomposition} introduces our
concept of a regularity decomposition of a differential system and presents
an algorithm to compute this decomposition.  Then, Section~\ref{sec:regde}
looks at regular behaviour in prolongations and where it appears in our
decomposition.  Section~\ref{sec:exmp} treats some examples in detail.
Finally, some conclusions are given in Section~\ref{sec:conclusions}.

\section{Connecting Algebra and Geometry}
\label{sec:cag}

In this section, we lay the groundwork to formalise and effectively prove
the theorems of the later sections, by adapting and combining the geometric
theory of differential equations and methods from (differential) algebra.
For the convenience of the reader, we briefly summarise some basic concepts
of the geometric theory in Appendix~\ref{sec:geojet} and (differential)
algebra in Appendices~\ref{sec:algThomas_appendix} and \ref{sec:diffideal}.

This combination of methods represents a non-trivial task, as the
philosophies behind the used geometric and algebraic approaches are very
different.  In differential algebra, one always considers all orders
simultaneously by studying differential ideals. This implies that one has
to deal with infinitely many variables.  Such an approach is particularly
adapted to tackle completion questions, i.\,e.\ the construction of
hidden integrability conditions, for which it is unclear how geometric
approaches could be extended in the presence of singularities.\footnote{A
  fundamental problem arises already in the geometric definition of a
  prolonged equation, if the given equation is not a manifold but only a
  variety.  Thus basic notions like formal integrability or involution are
  highly non-trivial to generalise to equations admitting singularities and
  to our knowledge nobody has done this so far.}  By contrast, in the
geometric theory one works typically in a jet bundle of fixed order which
allows to define singularities as points with special properties, whereas
the Kolchin topology in differential algebra employs a rather generic
notion of points not suitable to describe singularities.

As our algebraic tools require that the underlying field is algebraically
closed, we consider throughout complex differential equations, i.\,e.\ all
variables are assumed to be complex-valued.  While the starting point of
the geometric theory is an arbitrary fibred manifold $\pi:\E\rightarrow\X$,
we consider exclusively trivial bundles with total space
$\E=\CC^{n}\times\CC^{m}$, base space $\X=\CC^{n}$ and $\pi$ the projection
on the first factor.  As all our work is of a local nature, this
restriction is not serious.  But it allows us to identify the total spaces
of the jet bundles $\je{\ell}$ with affine spaces $\AA_{\CC}^{d}$ of
suitable dimensions $d$ and thus apply standard concepts from algebraic
geometry to these spaces.  We use two topologies on $\je{\ell}$, namely the
Zariski topology and the standard topology induced by the Euclidean metric.
To avoid confusions, we will always write explicitly Zariski respectively
metric open or closed.

\begin{definition}\label{def:ade}
  An \emph{algebraic jet set} of order $\ell$ is a locally Zariski closed
  subset $\Jc{\ell}\subseteq\je{\ell}$ of a jet bundle of order $\ell$
  (i.\,e.\ the difference of two varieties in $\je{\ell}$).  It is an
  \emph{algebraic differential equation} of order $\ell$, if in addition
  the metric closure of $\pi^{\ell}(\Jc{\ell})$ is the whole base
  $\CC^{n}$.  An algebraic jet set or an algebraic differential equation is
  called \emph{irreducible}, if it is an irreducible locally Zariski closed
  subset.
\end{definition}

Compared with the classical geometric Definition~\ref{def:gde} of a
differential equation, varieties are used here instead of manifolds which
is simultaneously a generalisation and a restriction.  On one side, we
permit that the differential equation $\Jc{\ell}$ contains singular points
in the sense of algebraic geometry.  On the other side, we consider
exclusively differential equations which can be globally described as the
solution set of an algebraic system on $\je{\ell}$ with polynomials
$p_{i},q_{j}\in\D_{\ell}$ (see Appendices~\ref{sec:algThomas_appendix} and
\ref{sec:diffideal} for notations and definitions).

Definition~\ref{def:gde} furthermore requires that the restriction of the
canonical projection $\pi^{\ell}:\je{\ell}\rightarrow\X$ to the set
$\Jc{\ell}$ is a surjective submersion.  We are relaxing this requirement
in two directions: surjectivity is replaced by a closure condition for the
image and we do not impose a maximal rank condition.  The second relaxation
is crucial for the definition of geometric singularities.  Surjectivity of
the restricted projection represents a geometric way of saying that the
independent variables are indeed independent, as otherwise our differential
equation could imply relations between them.  However, this idea is also
captured by our condition on the metric closure of its image and for an
equation like $xu'=1$ surjectivity represents too strong a condition.  We
use the metric closure here instead of the Zariski one, as for the analysis
of the local solution behaviour around singularities (which we will not do
in this work) it is important that exceptional points may be considered as
the limit of a sequence of points in $\pi^{\ell}(\Jc{\ell})$.

In applications, the typical starting point is a differential system of the
form
$S = \{\, p_1 = 0, \, \ldots, \, p_s = 0, \, q_1 \neq 0, \, \ldots, \, q_t
\neq 0 \,\}$ as introduced in \eqref{eq:differentialsystem} in
Appendix~\ref{sec:diffideal} rather than an algebraic differential equation
as defined above.  Thus we start on the differential algebraic side and
discuss now how we can obtain geometric objects (and algebraic descriptions
of them).  It turns out that this process involves a number of subtleties
requiring a careful discussion.

We associate with such a differential system $S$ the \emph{differential}
ideal
\begin{equation*}
  \hat{\I}_{\mathrm{diff}}(S):=\langle p_1,\ldots,p_s\rangle_\Delta\subseteq\D
\end{equation*}
generated by the equations in $S$.  It induces for any order $\ell\in\NN_0$
the \emph{algebraic} ideal
\begin{equation*}
  \hat{\I}_{\ell}(S):=\hat{\I}_{\mathrm{diff}}(S)\cap\D_{\ell}\subseteq\D_\ell
\end{equation*}
as the corresponding finite-dimensional truncation.  Note that this ideal
automatically contains all hidden integrability conditions up to order
$\ell$.  The inequations in the differential system $S$ are also used to
define for any order $\ell\in\NN_0$ an \emph{algebraic}
ideal,\footnote{Note that it is pointless to introduce a
  \emph{differential} ideal defined by the inequations, as differentiating
  an inequation does not lead to a condition that has to be satisfied by
  any holomorphic or formal solution of the differential system $S$.}
however, in a slightly different manner:
\begin{equation*}
  \K_{\ell}(S):=\langle \hat{Q}_\ell\rangle_{\D_{\ell}}\qquad
  \mbox{with}\quad 
  \hat{Q}_\ell=\prod_{\substack{j=1\\ \ord{(q_{j})}\leq \ell}}^t q_j\,.
\end{equation*}
These ideals lead then to the algebraic jet sets
\begin{equation}\label{eq:Rhat}
  \Jhc{\ell}(S):=\Sol{\bigl(\hat{\I}_{\ell}(S)\bigr)}\setminus
                 \Sol{\bigl(\K_{\ell}(S)\bigr)}\subseteq\je{\ell}
\end{equation}
consisting of all points of $\je{\ell}$ satisfying both the equations and
the inequations in $S$ interpreted as algebraic equations in $\je{\ell}$.
Since their definition is based on the differential ideal
$\hat{\I}_{\mathrm{diff}}(S)$, these sets satisfy for any $k>0$ the
inclusions
$\pi^{\ell+k}_{\ell}\bigl(\Jhc{\ell+k}(S)\bigr)\subseteq \Jhc{\ell}(S)$.
In fact, we always have
$\pi^{\ell+k}_{\ell}\Bigl(\Sol{\bigl(\hat{\I}_{\ell+k}(S)\bigr)}\Bigr)=
\Sol{\bigl(\hat{\I}_{\ell}(S)\bigr)}$, but the inequations may lead to a
strict inclusion of the above jet sets \cite{lh:diffcountpoly}.

\begin{remark}\label{rem:l}
  While it is possible to define the ideals $\hat{\I}_{\ell}(S)$ and the
  algebraic jet sets $\Jhc{\ell}(S)$ for any order $\ell\in\NN_0$, these
  ideals and sets are really meaningful only if no equation $p_i$ in the
  underlying differential system is of an order greater than
  $\ell$. Assuming that the system $S$ is solvable and the sets
  $\Jhc{\ell}(S)$ are algebraic differential equations, their solution sets
  are otherwise not comparable, as all equations in $S$ of order greater
  than $\ell$ are ignored in the construction of $\Jhc{\ell}(S)$.  In
  particular, for different values of $\ell$ the corresponding equations
  $\Jhc{\ell}(S)$ may have different solution sets. Note that the orders of
  the inequations in $S$ are irrelevant here, as they should be more
  considered as conditions on allowed initial data. From now on, we
  always assume that $\ell$ is sufficiently large.
\end{remark}

While this construction of the algebraic jet sets $\Jhc{\ell}(S)$ appears
very natural, it faces a number of serious challenges making it inadequate
for our purposes:
\begin{enumerate}[(i),itemsep=3pt,parsep=0pt,topsep=3pt]
\item There may exist differential polynomials vanishing on every solution
  in $\Sol{(S)}$, but not contained in the differential ideal
  $\hat{\I}_{\mathrm{diff}}(S)$.
\item It is not so easy to study the algebraic jet sets $\Jhc{\ell}(S)$, as
  e.\,g.\ the ideals $\hat{\I}_{\ell}(S)$ are generally not radical---this
  is a consequence of (i)---and thus not the vanishing ideals of the
  underlying variety.  In particular, it is not immediately obvious whether
  the algebraic jet sets are non-empty. Furthermore, the algebraic jet sets
  $\Jhc{\ell}(S)$ are not necessarily algebraic differential equations, as
  it is not guaranteed that their projection
  $\pi^\ell\bigl(\Jhc{\ell}(S)\bigr)$ satisfies the closure condition of
  Definition~\ref{def:ade}.
\item The effective determination of bases for the algebraic ideals
  $\hat{\I}_{\ell}(S)$ is non-trivial, because of the possible existence of
  hidden integrability conditions.
\item The algebraic jet sets $\Jhc{\ell}(S)$ may be too small, as
  interpreting differential inequations as algebraic ones leads to a change
  in their semantics eliminating many ``interesting'' points.  Assume for
  simplicity that the system~$S$ contains the inequation $u_x\neq0$.  It
  entails that the $x$-derivative of any solution of $S$ can never be the
  zero \emph{function}.  Nevertheless, it is well possible that the
  $x$-derivative of a solution possesses zeros and thus the corresponding
  jets of this solution have a vanishing $u_x$-\emph{coordinate}.  However,
  no point on a set $\Jhc{\ell}(S)$ with $\ell>0$ can have a vanishing
  $u_x$-coordinate \cite{lh:diffcountpoly}.
\end{enumerate}

Challenge (i) requires a differential Nullstellensatz for differential
systems, i.\,e.\ an extension of Theorem~\ref{thm:dnss} that also includes
inequations.  \cite[Lemma 2.2.62]{dr:habil} asserts that the vanishing
ideal of $\Sol{(S)}$ is given by the differential ideal
\begin{equation}\label{eq:dnss}
  \I_{\mathrm{diff}}(S):=
  \sqrt{\hat{\I}_{\mathrm{diff}}(S) : \hat{Q}^\infty}\subseteq\D\qquad
  \mbox{with}\quad \hat{Q}=\prod_{j=1}^tq_j\,.
\end{equation}
Hence as first step we must replace the differential ideal
$\hat{\I}_{\mathrm{diff}}(S)$ by this ideal. However, using directly the
above definition of $\I_{\mathrm{diff}}(S)$ makes its explicit
determination rather expensive because of the required radical computation
(so that Challenge~(iii) becomes even more pronounced).

Our next step towards overcoming the mentioned difficulties consists of
restricting to \emph{simple} differential systems.  For any differential
system $S$, a differential Thomas decomposition provides us with simple
differential systems $S_1,\dots,S_k$ such that $\Sol(S)$ is the disjoint
union of the sets $\Sol(S_i)$.  Hence after such a decomposition we may
analyse instead of the original system $S$ one by one the simple systems
$S_1,\dots,S_k$. Recall, however, that such a decomposition is not unique.

So we assume from now on that $S$ is a simple differential system.  For
simple systems, \cite[Prop.~2.2.72]{dr:habil} entails that the ideal
$\I_{\mathrm{diff}}(S)$ defined in \eqref{eq:dnss} may alternatively be
constructed via a simple saturation without an explicit radical
computation:
\begin{equation}\label{eq:dnsssimp}
  \I_{\mathrm{diff}}(S)=\hat{\I}_{\mathrm{diff}}(S):Q^\infty\qquad
  \mbox{with}\quad Q=\prod_{i=1}^s\left(\init{(p_i)}\cdot\spt{(p_i)}\right)\,.
\end{equation}
Note that now we do not saturate with respect to the inequations in $S$ but
with respect to the product of the initials and separants of all the
equations in the differential system $S$.\footnote{Given an arbitrary
  differential system $S$, let $S_{1},\ldots,S_{k}$ be the simple systems
  of any differential Thomas decomposition of it.  Then
  \cite[Prop.~2.2.72]{dr:habil} yields the ideal decomposition
  \begin{displaymath}
    \I_{\mathrm{diff}}(S)=
    \bigcap_{i=1}^k\hat{\I}_{\mathrm{diff}}(S_{i}):Q_i^\infty
  \end{displaymath}
  where $Q_i$ is the product of the initials and separants of the equations
  in $S_i$. This intersection is in general not minimal, but no effective
  way is known to decide whether or not an ideal in this intersection is
  superfluous, which is again the so-called Ritt problem \cite[\textsection
  IV.9]{kol:diffalg}.}  As before, we use the differential ideal
$\I_{\mathrm{diff}}(S)$ to introduce for any sufficiently large order
$\ell$ (see Remark~\ref{rem:l}) the algebraic ideal
\begin{equation}
  \I_\ell(S):=\I_{\mathrm{diff}}(S)\cap\D_\ell\subseteq\D_\ell\,.
\end{equation}
Since the differential ideal $\I_{\mathrm{diff}}(S)$ is radical, the same
is true for all the finite truncations $\I_\ell(S)$ which greatly
simplifies the study of their varieties. Our steps so far suggest to
consider instead of the sets $\Jhc{\ell}(S)$ the algebraic jet sets
\begin{equation}\label{eq:R}
  \Jc{\ell}(S):=\Sol{\bigl(\I_{\ell}(S)\bigr)}\setminus
                \Sol{\bigl(\K_{\ell}(S)\bigr)}\subseteq\je{\ell}\,.
\end{equation}

\begin{lemma}
  Given a simple differential system $S$, these algebraic jet sets satisfy
  $\pi^{k+\ell}_{\ell}\bigl(\Jc{\ell+k}(S)\bigr)=\Jc{\ell}(S)$ for all
  prolongation orders $k>0$.
\end{lemma}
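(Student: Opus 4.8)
The plan is to establish the two inclusions separately, using the fact that the underlying differential ideal is fixed while only the truncation order varies. First I would prove the easy inclusion $\pi^{k+\ell}_{\ell}\bigl(\Jc{\ell+k}(S)\bigr)\subseteq\Jc{\ell}(S)$. For a point $\rho\in\Jc{\ell+k}(S)$, write $\sigma=\pi^{k+\ell}_{\ell}(\rho)$. Since $\I_{\ell}(S)=\I_{\mathrm{diff}}(S)\cap\D_{\ell}\subseteq\I_{\ell+k}(S)=\I_{\mathrm{diff}}(S)\cap\D_{\ell+k}$, every polynomial vanishing at $\rho$ that lies in $\D_\ell$ already vanishes at $\sigma$ (it depends only on the coordinates up to order $\ell$), so $\sigma\in\Sol(\I_\ell(S))$. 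For the inequation part, note that $\hat{Q}_\ell$ divides $\hat{Q}_{\ell+k}$, or more precisely that every factor of $\hat{Q}_\ell$ is a factor of $\hat{Q}_{\ell+k}$ and lies in $\D_\ell$; since $\hat{Q}_{\ell+k}(\rho)\neq0$, each such factor is nonzero at $\rho$ hence at $\sigma$, giving $\hat{Q}_\ell(\sigma)\neq0$, i.e.\ $\sigma\notin\Sol(\K_\ell(S))$. This half does not use simplicity at all.

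The harder inclusion $\Jc{\ell}(S)\subseteq\pi^{k+\ell}_{\ell}\bigl(\Jc{\ell+k}(S)\bigr)$ is where simplicity enters and is the main obstacle. The key input is the identity \eqref{eq:dnsssimp}, $\I_{\mathrm{diff}}(S)=\hat{\I}_{\mathrm{diff}}(S):Q^\infty$, together with the structural properties of simple differential systems (triangularity, the passive/formally integrable form guaranteed by the differential Thomas decomposition). I would argue that for a simple system the generators of $\I_{\ell+k}(S)$ can be obtained from those of $\I_{\ell}(S)$ together with the prolongations of the equations $p_i$ of order between $\ell+1$ and $\ell+k$, and that the leading-derivative structure lets one solve these prolonged equations successively for the top-order coordinates: given $\sigma\in\Jc{\ell}(S)$, one extends it coordinate-by-coordinate in increasing order, at each step setting the principal derivatives equal to the values forced by the (passive) equations and choosing the parametric derivatives freely (say, zero). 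The nonvanishing of the relevant initials and separants along $\sigma$—guaranteed because $\sigma\notin\Sol(\K_\ell(S))$ and, more importantly, because the saturation in \eqref{eq:dnsssimp} has removed exactly the locus where initials or separants vanish—ensures the extension is well-defined and again satisfies all equations of $\I_{\mathrm{diff}}(S)$ truncated at order $\ell+k$. One must also check the constructed extension still avoids $\Sol(\K_{\ell+k}(S))$; since the new inequation factors in $\hat{Q}_{\ell+k}$ have order strictly above $\ell$, their leading derivatives are among the freely chosen parametric coordinates (or can be made nonzero by a generic choice), which one exploits here.

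I expect the delicate point to be making the phrase ``the leading-derivative structure lets one solve the prolonged equations successively'' fully rigorous: one needs that the simple system is $\Delta$-complete so that \emph{every} integrability condition up to order $\ell+k$ is already a consequence of the listed equations and their formal derivatives, with no new lower-order relations appearing upon prolongation, and that the ranking makes the resulting triangular system one whose principal part can be uniquely resolved wherever the separants do not vanish. This is precisely the content encapsulated in the cited results \cite[Prop.~2.2.72]{dr:habil} and the definition of a simple differential system, so I would invoke those rather than reprove them. A clean alternative packaging: show directly that $\I_{\ell}(S)$ together with the prolongations of the $p_i$ up to order $\ell+k$ generates an ideal whose saturation by $Q$ equals $\I_{\ell+k}(S)$, reduce to the level of $\hat{\I}$ via \eqref{eq:dnsssimp}, and then apply the classical fact that for a triangular (characteristic-set-like) system the projection of its solution set is surjective onto the solution set of its truncation once one inverts the initials and separants. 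Either route reduces the claim to properties of simple systems already available in the literature, so the write-up stays short.
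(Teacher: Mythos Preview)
Your easy inclusion is correct and, as you note, does not use simplicity. The gap is in the hard inclusion: your assertion that the initials and separants of the $p_i$ are nonvanishing at an arbitrary $\sigma\in\Jc{\ell}(S)$ is not justified, and is in fact generally false. Neither of your two reasons works. First, $\Sol(\K_\ell(S))$ is the vanishing locus of the \emph{inequations} $q_j$ listed in $S$; the initials and separants of the $p_i$ need not appear among them (simplicity only requires that they do not vanish on $\Sol(S)$, which is a strictly smaller set than $\Jc{\ell}(S)$). Second, the saturation in \eqref{eq:dnsssimp} removes whole components on which $Q$ vanishes identically, but the passage to $\Sol(\I_\ell(S))$ is a Zariski closure, and Remark~\ref{rem:sat} explicitly observes that this closure ``restores many of the removed points''---in particular points where separants vanish. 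Such points are precisely the singularities the whole paper is set up to detect. So the step ``solve the quasilinear prolonged equation for its leader'' can fail at exactly the points where you need it.

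The paper sidesteps this entirely by not attempting a pointwise triangular extension. It simply quotes the fact, stated just above the lemma, that truncations of a fixed differential ideal always satisfy $\pi^{\ell+k}_\ell\bigl(\Sol(\I_{\ell+k}(S))\bigr)=\Sol(\I_\ell(S))$---with no separant hypothesis. Simplicity is then used only for the inequation side: by Definition~\ref{def:diffsimple}(iii) no leader of a $q_j$ is a derivative of a leader of an equation, and all leaders are pairwise distinct, so the extra inequation factors in $\hat{Q}_{\ell+k}$ have leaders that are parametric in the fibre of $\Sol(\I_{\ell+k}(S))$ over $\sigma$ and can be avoided by a suitable choice of lift. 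Your final paragraph actually contains this second idea, but your route to producing a lift in $\Sol(\I_{\ell+k}(S))$ rests on the unfounded separant claim rather than on the differential-ideal fact the paper invokes.
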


\begin{proof}
  As already mentioned above, the fact that the algebraic ideals
  $\I_{\ell}(S)$ stem from a differential ideal entails that
  $\pi^{k+\ell}_{\ell}\Bigl(\Sol{\bigl(\I_{\ell+k}(S)\bigr)}\Bigr)=
  \Sol{\bigl(\I_{\ell}(S)\bigr)}$.  Since we are now dealing with a simple
  differential system, no leader of an inequation is a derivative of a
  leader of an equation and the leaders of all equations and inequations
  are pairwise different. Hence we also have
  $\pi^{k+\ell}_{\ell}\Bigl(\Sol{\bigl(\K_{\ell+k}(S)\bigr)}\Bigr)=
  \Sol{\bigl(\K_{\ell}(S)\bigr)}$.
\end{proof}

Note that this result resembles the definition of formal integrability in
the geometric theory of differential equations
\cite[Def.~2.3.15]{wms:invol}.  However, many regularity assumptions are
made in the geometric theory and given a fibred submanifold
$\J_{\ell}\subseteq\je{\ell}$ its prolongation
$\J_{\ell+k}\subseteq\je{\ell+k}$ is defined via an intrinsic geometric
process.  Formal integrability is then a special property of some
submanifolds $\J_{\ell}$ encoding the absence of hidden integrability
conditions.  In our approach, it is an automatic consequence of the use of
a differential ideal and the simplicity of the defining differential
system.

\begin{remark}\label{rem:sat}
  From a geometric point of view, saturations as they appear in
  \eqref{eq:dnss} and \eqref{eq:dnsssimp}, respectively, have the following
  meaning: $\Sol(I:J^\infty)$ is the Zariski closure of the set
  $\Sol(I)\setminus\Sol(J)$. Thus, since the same ideal
  $\I_{\mathrm{diff}}(S)$ appears in \eqref{eq:dnss} and
  \eqref{eq:dnsssimp}, the variety $\Sol\bigl(\I_{\ell}(S)\bigr)$ is the
  Zariski closure of the set obtained by removing from
  $\Sol\bigl(\hat{\I}_{\ell}(S)\bigr)$ either all points at which a
  separant or an initial of an equation in the system $S$ vanishes or
  $\Sol(\K_\ell)$. In both cases, the Zariski closure restores many of
  the removed points. This is important for us, as most of the
  singularities we are interested in are actually such points.

  However, if a whole irreducible component of
  $\Sol\bigl(\hat{\I}_{\ell}(S)\bigr)$ consists only of such removed
  points, then it remains removed. Indeed, there are two possibilities for
  such a component. Either it does not define an algebraic differential
  equation on its own. Then it trivially cannot have any solutions and
  there is no point in looking for singularities. Or if it is an algebraic
  differential equation, then we analyse it elsewhere. Indeed, recall
  that we obtained a simple system only by computing a differential Thomas
  decomposition of our original system and the removed component
  corresponds to some other simple system arising in this decomposition.

  By \cite[Thm.~1.94]{lh:phd}, the ideal $\I_{\ell}(S)$ is furthermore
  equidimensional in the sense that all of its associated primes possess
  the same dimension which excludes in particular the existence of embedded
  prime components. This represents a further simplification entailed by
  the restriction to simple systems.
\end{remark}

\begin{remark}
  It follows from \cite[Cor.~1.96]{lh:phd} that the set of equations in
  any simple differential system forms a regular chain. Hence the ideals
  $\I(S)$ and $\I_\ell(S)$ are (differentially resp.\ algebraically)
  \emph{characterisable}, i.\,e.\ ideals defined by characteristic sets
  (cf.~\cite{eh:tri1,eh:tri2} for a survey of the properties of such
  ideals and \cite{lh:diffdimpoly} for an application).
\end{remark}

Even after this replacement, Challenge (iv) remains open and indicates that
we should enlarge the sets $\Jc{\ell}(S)$.  However, for a general
algebraic differential equation $\Jc{\ell}$ we face another challenge.  If
we consider the subset of $\Jc{\ell}$ obtained as the union of the images
of all prolongations $j_\ell\sigma$ of classical solutions of the equation,
then this subset may cover only a small part of $\Jc{\ell}$ (this happens in
particular, if hidden integrability conditions exist).  As one
of the main aspects of singularities is an analysis of the local solution
behaviour in their neighbourhood, we only want situations where this subset
lies dense in the considered differential algebraic equation.  This
motivates the following notion.

\begin{definition}\label{def:locint}
  The algebraic differential equation $\Jc{\ell}\subset\je{\ell}$ is
  \emph{locally integrable}, if $\Jc{\ell}$ contains a Zariski open and
  dense subset $\Rc{\ell}\subseteq\Jc{\ell}$ such that for every point
  $\rho\in\Rc{\ell}$ at least one classical solution $\sigma$ exists with
  $\rho\in\im{j_\ell\sigma}$.
\end{definition}

In general, it is difficult to decide whether a given algebraic
differential equation $\Jc{\ell}\in\je{\ell}$ is locally integrable, as
this obviously requires an existence theory for solutions. In particular,
such a decision cannot be made by a purely geometric analysis of
$\Jc{\ell}$, but requires the considerations of higher-order equations, too
(large parts of \cite{wms:invol} are concerned with this question in the
regular case). However, the situation is different under our assumption of
a simple differential system, as for such systems the local integrability
is essentially part of their definition.  More precisely, we obtain the
following result which already indicates how the above defined algebraic
jet sets $\Jc{\ell}(S)$ can be enlarged without losing this property.

\begin{proposition}\label{prop:locint}
  Let $S$ be a simple differential system with respect to a Riquier ranking
  and consider for an arbitrary
  order $\ell\in\NN$ the above defined algebraic jet set
  $\Jc{\ell}(S)$. Then its Zariski closure $\overline{\Jc{\ell}(S)}$ is a
  locally integrable algebraic differential equation.
\end{proposition}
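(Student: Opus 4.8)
The plan is to establish the two defining properties of a locally integrable algebraic differential equation separately: first that $\overline{\Jc{\ell}(S)}$ is an algebraic differential equation in the sense of Definition~\ref{def:ade}, and second that it contains a Zariski dense open subset through each point of which a classical solution passes. The key observation is that, because $S$ is simple, the differential Thomas decomposition machinery provides an existence theory: for simple systems with respect to a Riquier ranking, the formal power series solutions are parametrised by consistent initial data (the values of the principal derivatives of the parametric derivatives at a chosen base point), and such formal solutions converge to holomorphic solutions. This is precisely the content that makes simplicity so strong a hypothesis, and it is where I would invoke the Riquier existence theorem (or its modern formulations in the references on the differential Thomas decomposition, e.g.\ \cite{th:ds,glhr:tdds,dr:habil}).

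First I would treat the differential equation property. We need the metric closure of $\pi^\ell(\overline{\Jc{\ell}(S)})$ to be all of $\CC^n$. Since $S$ is a (consistent, by the conventions attached to simplicity) simple system, it has holomorphic solutions, and any such solution's $\ell$-jet has image over a full open subset of the base; hence $\pi^\ell(\Jc{\ell}(S))$ already contains a metrically open set, and a fortiori its Zariski closure projects to something metrically dense in $\CC^n$. One subtlety to handle carefully is that $\Jc{\ell}(S)$ could in principle be empty, but simplicity of $S$ rules this out — a simple system is consistent and $\I_{\mathrm{diff}}(S)$ is a proper ideal, so $\Sol(\I_\ell(S))$ is nonempty and, by the remark on equidimensionality and the structure of characterisable ideals, it is not entirely swallowed by $\Sol(\K_\ell(S))$ (the inequation leaders are disjoint from and not derivatives of the equation leaders).

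Next I would address local integrability proper. Set $\Jhc{\ell}=\overline{\Jc{\ell}(S)}$. I claim the subset $\Rc{\ell}$ of points lying on the $\ell$-jet of some holomorphic solution is Zariski dense in $\Jhc{\ell}$, and that it contains a Zariski open subset. For density: each irreducible component of $\Jhc{\ell}$ corresponds (via the characteristic set of $S$ truncated to order $\ell$) to a choice of parametric derivatives, and the points of $\Jc{\ell}(S)$ that satisfy the inequations in $S$ as honest function-inequations at the base point — i.e.\ those that arise as genuine initial data for the Riquier existence theorem — form a constructible set whose Zariski closure is all of $\Jhc{\ell}$; this is essentially Remark~\ref{rem:sat}, which says $\Sol(\I_\ell(S))$ is the Zariski closure of what remains after removing the vanishing loci of initials/separants or of $\K_\ell$. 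Through each such point the Riquier theorem produces a convergent power series solution $\sigma$ with the prescribed $\ell$-jet, so that point lies in $\im j_\ell\sigma$. For openness of (a subset of) $\Rc{\ell}$: the complement of the locus where some initial, separant, or inequation polynomial of $S$ vanishes is Zariski open, dense (by equidimensionality and the fact that none of these polynomials vanishes identically on any component), and contained in $\Rc{\ell}$.

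The main obstacle I expect is the careful bookkeeping around Challenge~(iv): a point of $\Jc{\ell}(S)$ may have, say, a vanishing $u_x$-coordinate even though the inequation $u_x\neq 0$ in $S$ forbids the derivative as a function from vanishing identically — so not literally every point of $\Jc{\ell}(S)$ (let alone its closure) can be realised as the $\ell$-jet of a solution. The resolution is exactly that we only need a \emph{Zariski open dense} subset $\Rc{\ell}$, and the ``good'' points (those at which all inequation polynomials are nonzero, hence form valid initial data) are open and dense; showing that the Riquier construction applies at precisely these points, and that its output is a classical (holomorphic, not merely formal) solution with the correct $\ell$-jet, is the crux. I would lean on the Cauchy–Kovalevskaya/Riquier convergence statement available for passive orthonomic (simple, Riquier-ordered) systems, and on the jet-prolongation lemma proved just above to ensure the $\ell$-jet of the constructed solution indeed lies in $\Jc{\ell}(S)$ rather than only in some higher or lower truncation.
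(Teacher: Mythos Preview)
Your proposal is correct and follows essentially the same route as the paper: identify the Zariski open and dense subset where no initial or separant of an equation in $S$ vanishes (and the point is smooth), then invoke the Riquier existence theorem to produce a convergent holomorphic solution through each such point. The paper's proof is terser---it directly defines $\Rc{\ell}$ as this ``good'' locus (rather than first defining it as the set of points on prolonged solutions and then showing it contains this locus) and cites \cite[Cor.~11]{glhr:tdds} for the convergence step---but the substance is the same, and your more explicit treatment of the closure condition and of Challenge~(iv) only makes the argument clearer.
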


\begin{proof}
  Obviously, $\Jc{\ell}(S)$ is Zariski dense in $\overline{\Jc{\ell}(S)}$
  and it suffices to prove that $\Jc{\ell}(S)$ is a locally integrable
  algebraic differential equation. The proof of the local integrability
  essentially boils down to an extension of Remark~\ref{rem:powerseries}
  where the construction of formal power series solutions is discussed.  We
  consider the Zariski open subset $\Rc{\ell}\subseteq\Jc{\ell}(S)$
  consisting of all smooth points at which no separant or initial of an
  equation in $S$ vanishes. By the considerations in Remark~\ref{rem:sat},
  $\Rc{\ell}$ is even Zariski dense in $\Jc{\ell}$. As remarked in
  \cite[Cor.~11]{glhr:tdds}, one can now straightforwardly adapt the proof
  of Riquier's Theorem~\ref{thm:riq} and conclude that the formal power
  series constructed in Remark~\ref{rem:powerseries} converges to a
  holomorphic solution $\sigma$ defined on some open subset of $\CC^n$.
\end{proof}

We are thus lead to consider the Zariski closure $\overline{\Jc{\ell}(S)}$
instead of $\Jc{\ell}(S)$.  Since it is a Zariski closed set in $\je{\ell}$
and thus a variety, we are obviously interested in its vanishing ideal.
Since $\I_\ell(S)$ is a radical ideal and we are working over an
algebraically closed field, it is a classical result in algebraic geometry
that it is given by the quotient ideal $\I_\ell(S):\hat{Q}_\ell$ (cf.\
e.\,g.\ \cite[Chapt.~4, Sect.~4, Thm.~7]{clo:iva}). The following lemma
shows that in our case this quotient simply means to ignore the inequations
in the system.

\begin{lemma}\label{lem:SolIl}
  For any order $\ell\in\NN$ we have
  $\overline{\Jc{\ell}(S)}=\Sol\bigl(\I_{\ell}(S)\bigr)$.
\end{lemma}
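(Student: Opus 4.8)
The plan is to reduce the asserted equality of sets to an equality of ideals and then to verify that ideal identity by a saturation argument. First, since $\K_{\ell}(S)=\langle\hat{Q}_\ell\rangle_{\D_{\ell}}$, one has $\Jc{\ell}(S)=\Sol(\I_\ell(S))\setminus\Sol(\hat{Q}_\ell)$. Because $\I_\ell(S)$ is radical and the ground field is algebraically closed, the classical description of the vanishing ideal of a difference of varieties recalled in Remark~\ref{rem:sat} (cf.\ \cite[Chapt.~4, Sect.~4, Thm.~7]{clo:iva}) applies and gives $\overline{\Jc{\ell}(S)}=\Sol\bigl(\I_\ell(S):\hat{Q}_\ell^\infty\bigr)$. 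Thus it suffices to prove $\I_\ell(S):\hat{Q}_\ell^\infty=\I_\ell(S)$, i.e.\ that $\hat{Q}_\ell$ is a nonzerodivisor modulo $\I_\ell(S)$ --- equivalently, that $\hat{Q}_\ell$ vanishes on no irreducible component of $\Sol(\I_\ell(S))$.

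To establish this I would pass to the differential ideal $\I_{\mathrm{diff}}(S)$. The key observation is that $\I_{\mathrm{diff}}(S):\hat{Q}^\infty=\I_{\mathrm{diff}}(S)$: by \eqref{eq:dnss}, $\I_{\mathrm{diff}}(S)$ is the radical of $\hat{\I}_{\mathrm{diff}}(S):\hat{Q}^\infty$, an ideal that is already saturated with respect to $\hat{Q}$, and one checks that the radical of a $\hat{Q}$-saturated ideal is again $\hat{Q}$-saturated (if $f\hat{Q}^n$ lies in the radical, then $f^m\hat{Q}^{nm}$ lies in the ideal for some $m$, hence $f^m$ does, hence $f$ lies in the radical). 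Now given $f\in\D_\ell$ with $f\hat{Q}_\ell^n\in\I_\ell(S)$, I would use that $\I_\ell(S)=\I_{\mathrm{diff}}(S)\cap\D_\ell$ and that $\hat{Q}_\ell$ divides $\hat{Q}$ in $\D$: multiplying by $(\hat{Q}/\hat{Q}_\ell)^n$ yields $f\hat{Q}^n\in\I_{\mathrm{diff}}(S)$, hence $f\in\I_{\mathrm{diff}}(S):\hat{Q}^\infty=\I_{\mathrm{diff}}(S)$, and therefore $f\in\I_{\mathrm{diff}}(S)\cap\D_\ell=\I_\ell(S)$. Together with the trivial reverse inclusion this gives the desired ideal identity, and combining it with the first step yields $\overline{\Jc{\ell}(S)}=\Sol(\I_\ell(S))$.

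I expect the main obstacle to be the passage from the full differential ideal down to the truncation $\D_\ell$: one must be careful that the cofactor $\hat{Q}/\hat{Q}_\ell=\prod_{\ord(q_j)>\ell}q_j$ need not lie in $\D_\ell$, so the multiplication step has to be carried out in the differential ring $\D$ and only afterwards intersected back down to $\D_\ell$. Conceptually, the statement encodes the phenomenon emphasised before the lemma, namely that on each component of $\Sol(\I_\ell(S))$ the inequations of the simple system are only generically active: a point may well have a vanishing $\hat{Q}_\ell$-coordinate, but $\hat{Q}_\ell$ cannot vanish identically as a function on the solutions of $S$, so no irreducible component of $\Sol(\I_\ell(S))$ is contained in $\Sol(\K_\ell(S))$.
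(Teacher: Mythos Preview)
Your proof is correct and follows essentially the same route as the paper. Both arguments reduce to showing that $\I_\ell(S)$ is already saturated with respect to $\hat{Q}_\ell$, and both verify this by multiplying by the cofactor $\hat{Q}/\hat{Q}_\ell$ to upgrade a $\hat{Q}_\ell$-relation to a $\hat{Q}$-relation and then invoking that $\I_{\mathrm{diff}}(S)$ is $\hat{Q}$-saturated; the only cosmetic difference is that you first isolate the statement $\I_{\mathrm{diff}}(S):\hat{Q}^\infty=\I_{\mathrm{diff}}(S)$ as a separate observation, whereas the paper unfolds the radical definition and performs the same manipulation inline.
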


\begin{proof}
  Our assertion is equivalent to the following equality:
  \begin{displaymath}
    \biggl( \sqrt{ \hat{\I}_{\mathrm{diff}}(S): \hat{Q}^\infty } \cap
    \D_\ell \biggr):\hat{Q}_{\ell} =
    \sqrt{ \hat{\I}_{\mathrm{diff}}(S): \hat{Q}^\infty } \cap \D_\ell\,.
  \end{displaymath}
  The inclusion ``$\supseteq$'' is clear. For the reverse inclusion, we
  first note that, since $\hat{Q}_{\ell}$ divides $\hat{Q}$, we have
  $\hat{Q} = \hat{Q}_{\ell} \tilde{Q}$ for some $\tilde{Q}\in\D$. Let
  $P \in \D_\ell$ be such that
  $(P \hat{Q}_{\ell})^k \in \hat{\I}_{\mathrm{diff}}(S): \hat{Q}^\infty$
  for some positive integer $k$.  Then there exists an exponent $r\in\NN_0$
  such that
  $P^k\hat{Q}_{\ell}^k\hat{Q}^r\in\hat{\I}_{\mathrm{diff}}(S)$. Multiplication
  by $\tilde{Q}^k$ yields that
  $P^k\hat{Q}^{r+k}\in\hat{\I}_{\mathrm{diff}}(S)$. Hence
  $P^k\in\hat{\I}_{\mathrm{diff}}(S): \hat{Q}^\infty$ and thus $P$ lies in
  the radical.
\end{proof}

By definition, the equations in a simple differential system define a
passive system.  This observation allows us to resolve Challenge (ii).
Passivity implies consistency making it impossible that an equation $p_i$
depends only on the independent variables $x^j$. Hence for each algebraic
jet set $\Sol\bigl(\I_{\ell}(S)\bigr)$ it is clear that its image under the
canonical projection $\pi^\ell$ satisfies the closure condition of
Definition~\ref{def:ade} and thus that it is an algebraic differential
equation.  Furthermore, a passive system cannot contain a constant implying
via Hilbert's Nullstellensatz that all these sets are non-empty.

\begin{remark}\label{rmrk:genalgideal}
  The passivity of the equations also allows us to solve the remaining
  Challenge (iii): the explicit construction of generators for the
  algebraic ideals $\I_\ell(S)$ which we now use instead of
  $\hat{\I}_\ell(S)$.  The definition of passivity is based on the notion
  of (non-)multiplicative variables \cite{vpg:decomp,dr:habil}.  Consider
  now for any $\ell$ the following set
  \begin{multline}\label{eq:Bl}
    B_{\leq\ell}:=
    \Bigl\{\,\delta^\mu p_i\mid 1\leq i\leq s,\ |\mu|+\ord{(p_i)}\leq\ell,\\
    \mu_j=0
    \mbox{\ if\ }j\mbox{\ not Janet multiplicative for\ }p_i\,\Bigr\}
  \end{multline}
  obtained by differentiating each equation in $S$ with respect to its
  multiplicative variables until the order $\ell$ is reached. It provides
  us with an explicit generating set of the ideal $\hat{\I}_\ell(S)$.

  We define an algebraic system $S_{\leq\ell}$ by taking the elements of
  $B_{\leq\ell}$ as the equations and keeping all inequations of $S$ with
  order less than or equal to $\ell$. Since $S$ is assumed to be a simple
  differential system, it is easy to see that $S_{\leq\ell}$ is a simple
  algebraic system (both the initial and the separant of a derivative
  $\delta_kp_i$ are simply the separant of $p_i$). In
  \cite[Lemma~1.93]{lh:phd}, it is shown that
  $\I_{\mathrm{alg}}(S_{\leq\ell})=\I_\ell(S)$, where the ideal
  $\I_{\mathrm{alg}}(S_{\leq\ell})$ is defined in
  Equation~\eqref{eq:Ialg}. Recall from \eqref{eq:Ialg} that the
  determination of $\I_{\mathrm{alg}}(S_{\leq\ell})$ requires a
  saturation. Thus an explicit basis of $\I_\ell(S)$ is obtained by
  saturating the ideal generated by $B_{\leq\ell}$ by the product of the
  initials of the elements of $B_{\leq\ell}$.  This operation can be done
  effectively using Gr\"obner bases. It follows from Remark~\ref{rem:sat}
  and the definition \eqref{eq:Ialg} of $\I_{\mathrm{alg}}$ that
  $\overline{\Sol{(S_{\leq\ell})}}=\overline{\Jc{\ell}(S)}$.
\end{remark}

\begin{example}\label{ex:sat}
  To demonstrate in particular the effect of the saturation in the
  definition of the ideal $\I_{\mathrm{diff}}(S)$, we consider the
  following differential system consisting of two partial differential
  equations for an unknown function $u(x,y)$:
  \begin{equation}\label{eq:satex}
    p_1:= uu_x-yu-y^2 \,,\qquad  p_2:= yu_y-u\,.
  \end{equation}
  Adding the inequation $\spt{(p_1)}=u\not=0$ yields the only simple
  differential system~$S$ appearing in a differential Thomas decomposition
  of the system \eqref{eq:satex}. If we start with the differential ideal
  $\hat{\I}_{\mathrm{diff}}(S)=\langle p_1,p_2\rangle_{\Delta}$, then the
  algebraic ideal $\hat{\I}_{1}(S)=\hat{\I}_{\mathrm{diff}}(S)\cap\D_1$ has
  the prime decomposition
  $\hat{\I}_{1}(S)=\langle p_2,p_3\rangle\cap\langle u,y\rangle$ where
  \begin{equation}\label{eq:satp3}
    p_3:=u_yu_x-u-y
  \end{equation}
  and hence also $\hat{\I}_{\mathrm{diff}}(S)$ cannot be a prime. The
  saturation by $Q:=yu$ used in the definition \eqref{eq:dnsssimp} of
  $\I_{\mathrm{diff}}(S)$ removes the prime component $\langle u,y\rangle$
  of $\hat{\I}_{1}(S)$, more precisely
  $\I_{\mathrm{diff}}(S)=\langle p_2,p_3\rangle_{\Delta}$ and thus
  $\I_{1}=\langle p_2,p_3\rangle\subset\D_1$ (note that $p_1=yp_3-u_xp_2$).
  Indeed, if we compare for any order $\ell>0$ the algebraic jet sets
  $\Sol{(\I_\ell(S))}\subset \Sol{(\hat{\I}_\ell(S))}\subset \je{\ell}$,
  then we see that at all removed points the separants of the equations
  \eqref{eq:satex} vanish.

  In this particular case, the generators of the removed prime component do
  not define a consistent differential system, as one of them is the
  independent variable~$y$. Hence we are not losing any solutions by its
  removal. In other examples, we may remove components defining consistent
  systems. However, in such cases the properties of the differential Thomas
  decomposition ensure that the corresponding solutions appears in some
  other simple differential system.
\end{example}

\begin{remark}\label{rem:fintype}
  Riquier's Theorem~\ref{thm:riq} asserts that a certain initial value
  problem adapted to the choice of leaders in the equations of the system
  possesses a unique holomorphic solution (the explicit construction of the
  corresponding initial conditions is explained in more modern terms in
  \cite{sz:atls}; see also \cite[Sect.~9.3]{wms:invol}). If the system
  $S_{\leq\ell}$ is of finite type, then the coordinates of the considered
  point $\rho\in\Jc{\ell}$ provide all required initial data and in this
  case the holomorphic solution $\sigma$ such that
  $\rho\in\im{j_\ell\sigma}$ is uniquely determined. Otherwise, the
  coordinates of the considered point $\rho\in\Jc{\ell}$ provide only
  values for a finite subset of the infinitely many arbitrary Taylor
  coefficients of the series constructed in
  Remark~\ref{rem:powerseries}. Hence, in this case infinitely many
  different holomorphic solutions $\sigma$ exist such that
  $\rho\in\im{j_\ell\sigma}$, all of which possess the same Taylor
  expansion up to order $\ell$.
\end{remark}

\section{Vessiot Cones and Generalised Solutions}
\label{sec:cones}

In Appendix~\ref{sec:geojet}, we recall some basic concepts of Vessiot's
approach to a solution theory for differential equations.  Again some
adaptions are required, as we are now using a more general notion of
differential equations.  Furthermore, it turns out useful for the study
of singularities to introduce a more general concept of solutions than the
classical solutions of Definition~\ref{def:sol}.

The Vessiot space $\V_{\rho}[\Jc{\ell}]$ (cf.\ Definition~\ref{def:vessp})
at a point $\rho$ on a differential equation $\Jc{\ell}$ consists of the
tangential part of the contact distribution at~$\rho$.  As an algebraic jet
set $\Jc{\ell}$ is a locally Zariski closed subset which may contain
non-smooth points, the question arises how this definition should be
extended.  One could continue to apply it without changes using the tangent
space $T_{\rho}\Jc{\ell}$ in the sense of algebraic geometry.  Then one
would still obtain linear spaces; however, their dimension would be too
high.  We prefer therefore another extension.  Given a classical solution
$\sigma$ of $\Jc{\ell}$ such that $\rho\in\im{j_{\ell}\sigma}$, it follows
from a well-known characterisation of the \emph{tangent cone} as limit of
secants (see e.\,g.\ \cite[\textsection9.7, Thm.~6]{clo:iva}) that actually
$T_{\rho}(\im{j_{\ell}\sigma})\subseteq C_{\rho}\Jc{\ell}$ where
$C_{\rho}\Jc{\ell}$ denotes the tangent cone of $\Jc{\ell}$ at $\rho$.
This observation motivates the following extension of Vessiot spaces.

\begin{definition}\label{def:vessc}
  The \emph{Vessiot cone} $\V_{\rho}[\Jc{\ell}]$ of the algebraic jet set
  $\Jc{\ell}\subseteq\je{\ell}$ at a point $\rho\in\Jc{\ell}$ is the set
  $\V_{\rho}[\Jc{\ell}]=T_{\rho}\Jc{\ell}\cap\C_{\ell}|_{\rho}$.
\end{definition}

We continue to denote the family of all Vessiot cones by $\V[\Jc{\ell}]$.
At smooth points, the tangent cone and the tangent space coincide and
therefore we still speak of Vessiot spaces at such a point.  The following
elementary result recalls how Vessiot spaces can be easily computed at
smooth points.  As a consequence of it, we still often call $\V[\Jc{\ell}]$
the \emph{Vessiot distribution} of $\Jc{\ell}$ in the sequel, although,
strictly speaking, this terminology is not correct, as a cone is generally
not a linear space, but only a union of one-dimensional linear spaces.

\begin{proposition}\label{prop:vessdist}
  Let $\Jc{\ell}$ be an irreducible algebraic jet set.  Then the family of
  Vessiot cones $\V[\Jc{\ell}]$ define on a Zariski open and dense subset
  $\O_{V}\subseteq\Jc{\ell}$ a smooth regular distribution.
\end{proposition}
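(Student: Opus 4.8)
The plan is to exhibit the promised open dense subset $\O_V$ as the locus where three desirable properties hold simultaneously: smoothness of $\Jc{\ell}$ as a variety, constancy of the dimension of the Vessiot cone, and coincidence of the Vessiot cone with an honest linear subspace of the contact distribution. Since $\Jc{\ell}$ is irreducible, its smooth locus $\O_{\mathrm{sm}}$ is Zariski open and dense by the standard result in algebraic geometry, and on $\O_{\mathrm{sm}}$ the tangent cone $C_\rho\Jc{\ell}$ coincides with the tangent space $T_\rho\Jc{\ell}$, so there the Vessiot cone is genuinely the linear space $\V_\rho[\Jc{\ell}]=T_\rho\Jc{\ell}\cap\C_\ell|_\rho$. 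The remaining task is to control the dimension of this intersection and to arrange that it varies smoothly.

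The key computational observation is that on $\O_{\mathrm{sm}}$ the Vessiot space is the kernel of a linear map whose entries are rational functions of the coordinates of $\rho$. Concretely, picking local generators $p_1,\dots,p_r$ of the vanishing ideal near a smooth point, the tangent space $T_\rho\Jc{\ell}$ is the common kernel of the differentials $d p_i|_\rho$, and intersecting with the contact distribution $\C_\ell|_\rho$ (whose defining one-forms have polynomial, indeed affine-linear, coefficients) amounts to adjoining finitely many more linear conditions. Thus $\V_\rho[\Jc{\ell}]$ is the kernel of a matrix $M(\rho)$ depending polynomially (on $\O_{\mathrm{sm}}$, rationally once we have cleared denominators coming from a choice of local frame) on $\rho$. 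The function $\rho\mapsto\operatorname{rank} M(\rho)$ is lower semicontinuous in the Zariski topology, hence attains its generic value $r_0$ on a Zariski open dense subset; intersecting this with $\O_{\mathrm{sm}}$ gives $\O_V$. On $\O_V$ the map $M$ has locally constant rank, so by the constant-rank theorem (or simply by the fact that a matrix of locally constant rank with smooth entries has a smoothly varying kernel, via a Schur-complement / cofactor argument) the assignment $\rho\mapsto\V_\rho[\Jc{\ell}]$ is a smooth subbundle of $T\Jc{\ell}|_{\O_V}$, i.e.\ a smooth regular distribution of rank $\dim\je{\ell}-r_0$ restricted appropriately; regularity here means exactly constant dimension, which we have arranged.

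The main obstacle is the bookkeeping at the interface between the algebraic and the differential-geometric pictures: one must be careful that "smooth point of the variety $\Jc{\ell}$'' (algebro-geometric smoothness) is what is needed for the tangent cone to equal the tangent space and for $\Jc{\ell}$ to be a manifold near $\rho$ in the metric topology, so that the notion of a smooth distribution makes sense, and one must check that the contact forms restrict to this submanifold without introducing spurious singular behaviour. This is handled by noting that the contact codistribution on $\je{\ell}$ is everywhere of constant rank with globally defined polynomial generators, so intersecting with it only adds a fixed finite family of linear forms and cannot worsen the rank stratification; the single genericity statement "rank of $M$ is maximal'' then absorbs all the relevant degeneracies. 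One should also remark that $\O_V$ is nonempty precisely because $\Jc{\ell}$ is irreducible, so the generic rank is well defined on the whole of $\Jc{\ell}$ rather than componentwise.
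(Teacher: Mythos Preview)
Your proposal is correct and follows essentially the same route as the paper: restrict to the smooth locus (Zariski open and dense by irreducibility), observe that there the Vessiot cone is the kernel of a linear system with polynomial entries, and use semicontinuity of the rank to pass to a further open dense subset where the dimension is constant and the kernel varies smoothly. The only cosmetic difference is that the paper parametrises vectors in $\C_\ell$ directly via the contact fields $C_i^{(\ell)}$ and $C^\mu_\alpha$ (leading to the explicit system $D(\rho)\av+M_\ell(\rho)\bv=0$), whereas you take the dual viewpoint and adjoin the contact one-forms to the differentials $dp_i$; these are equivalent formulations of the same linear-algebraic computation.
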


\begin{proof}
  The subset of all smooth points of $\Jc{\ell}$ is Zariski open and dense
  and defines a connected complex manifold \cite[Sect.~0.2]{gh:pag}.  At
  any point $\rho$ of this manifold, the tangent space $T_{\rho}\Jc{\ell}$
  and thus the Vessiot space $\V_{\rho}[\Jc{\ell}]$ can be computed using
  linear algebra.  As a locally Zariski closed set, the algebraic jet set
  $\Jc{\ell}$ is a Zariski open subset of the zero set of some polynomial
  functions $\Phi^{\tau}:\je{\ell}\rightarrow\CC$.  Since, by definition,
  the Vessiot spaces are contained in the contact distribution, we make for
  any vector $\vv\in\V_{\rho}[\Jc{\ell}]$ the ansatz
  \begin{align}\label{eq:ansvess}
    \vv=\sum_{i}a^{i}C_{i}^{(\ell)}|_{\rho}+
    \sum_{|\mu|=\ell}\sum_{\alpha}b^{\alpha}_{\mu}C^{\mu}_{\alpha}|_{\rho}
  \end{align}
  with yet to be determined coefficients $a^{i}$, $b^{\alpha}_{\mu}\in\CC$.
  At a smooth point $\rho$, such a vector is tangential to $\Jc{\ell}$, if
  and only if it satisfies $d\Phi^{\tau}|_{\rho}(\vv)=0$ for all~$\tau$.
  Hence, we obtain a homogeneous linear system for the coefficient vectors
  $\av$, $\bv$,
  \begin{align}\label{eq:vessdist}
    D(\rho)\av+M_{\ell}(\rho)\bv=0\,,
  \end{align}
  where the entries of the matrices $D$, $M_{\ell}$ are given
  by\footnote{The columns of the matrix $M_{\ell}$ are labelled by $\tau$
    and the rows by the pairs $(\mu,\alpha)$.}
  \begin{align}\label{eq:DM}
    D^{\tau}_{i}(\rho)=C_{i}^{(\ell)}(\Phi^{\tau})(\rho)\,,\qquad
    (M_{\ell})^{\tau\mu}_{\alpha}(\rho)=C^{\mu}_{\alpha}(\Phi^{\tau})(\rho)\,.
  \end{align}
  In general, the behaviour of (\ref{eq:vessdist}) varies over $\Jc{\ell}$;
  e.\,g.\ the dimension of $\V_{\rho}[\Jc{\ell}]$ may jump.  However,
  considered as functions of $\rho$, the solutions of \eqref{eq:vessdist}
  are smooth outside of a Zariski closed set and---by potentially enlarging
  this set---we may even assume that the dimension remains constant, being
  an upper semicontinuous function.  Thus on a Zariski open and dense set
  we obtain a smooth regular distribution.
\end{proof}

In an analogous way, we extend the notion of a symbol space to that of a
symbol cone.  Again it is straightforward to show that on a Zariski open
subset of $\Jc{\ell}$ the symbol spaces $\Nc{\ell}{\rho}$ define a smooth
regular distribution $\Nc{\ell}{}$.  At a smooth point $\rho\in\Jc{\ell}$,
the symbol space $\Nc{\ell}{\rho}$ consists of those solutions of
\eqref{eq:vessdist} for which all coefficients $\av$ vanish.  Hence, at
smooth points we can always decompose the Vessiot space as a direct sum of
linear subspaces,
\begin{align}\label{eq:Vdecomp}
\V_{\rho}[\Jc{\ell}]=\Nc{\ell}{\rho}\oplus\H_{\rho}\,,
\end{align}
with some $\pi^\ell$-transversal complement $\H_{\rho}$ which is not uniquely
determined.

\begin{remark}\label{rem:vessprol}
  If one computes for a differential equation $\Jc{\ell}$
  order by order a formal power series solution around some expansion
  point, then one obtains for the Taylor coefficients of order $\ell+1$ an
  inhomogeneous linear system with a matrix and right hand side depending
  on the lower order coefficients (see \cite[Sect.~2.3]{wms:invol} for more
  details).  One can show that the linear system (\ref{eq:vessdist}) is a
  homogenised form of this linear system \cite[Rem.~9.5.6]{wms:invol}.  Let
  us assume that it is possible to solve (\ref{eq:vessdist}) in such a way
  that the coefficients $\av$ remain undetermined (this is actually what we
  expect to happen generically).  Then we can relate the solutions of
  (\ref{eq:vessdist}) with the derivatives $u^{\alpha}_{\nu}$ of order
  $\ell+1$ of the power series solution.  Indeed, in this case we must find
  for each value $1\leq i\leq n$ a solution $\bar\av$, $\bar\bv$ such that
  $\bar a^{j}=\delta^{j}_{i}$ and
  $\bar b^{\alpha}_{\mu}=u^{\alpha}_{\mu+1_{i}}$.  Conversely, one can see
  that if no such solution exists for (\ref{eq:vessdist}) at some point
  $\rho\in\Jc{\ell}$, then no \emph{smooth} solution $\sigma$ with
  $\rho\in\im{j_\ell\sigma}$ can exist, as at least one derivative of order
  $\ell+1$ becomes infinite.
\end{remark}

In the decomposition \eqref{eq:Vdecomp}, we can choose at any smooth point
$\rho\in\Jc{\ell}$ an arbitrary complement $\H_\rho$.  
A solution $\sigma$ with
$\rho\in\im{j_ \ell\sigma}$ can exist only, if the complement $\H_\rho$ is
$n$-dimensional (cf.\ Proposition~\ref{prop:sol}).  This raises the question
whether it is possible to correlate the choices in the neighbourhood of a
point in such a way that the chosen complements form an involutive
distribution.  If this is possible at all, then for most systems, there are
actually infinitely many ways to do this (parametrised by the symbol).
Only for a special class of differential equations---comprising in
particular most ordinary differential equations---only a unique possibility
exists.

\begin{definition}\label{def:fintype}
  An algebraic differential equation $\Jc{\ell}$ is \emph{of finite type},
  if it contains a Zariski open and dense subset
  $\F_\ell\subseteq\Jc{\ell}$ such that at all points $\rho\in\F_\ell$ the
  symbol cone $\Nc{\ell}{\rho}$ vanishes.
\end{definition}

In the literature, one can find many alternative names for equations of
finite type.  In the theory of linear systems, the term \emph{holonomic}
system is very popular.  Another common terminology, in particular for
partial differential equations, is \emph{maximally overdetermined} system.
From a geometric point of view, (regular first-order) equations of finite
type correspond to connections over the fibration $\pi$ (see
\cite[Remark~2.3.6]{wms:invol}).

For the analysis of singularities, it turns out to be convenient to
introduce more general kinds of solutions directly as geometric objects
without reference to a section.  The following definition simply relaxes
some of the conditions on the subdistribution $\H$ in the second part of
Proposition~\ref{prop:sol}.  Note that such a generalised solution lives in
the jet bundle $\je{\ell}$ and not in the total space $\E$ of the fibration
$\pi$ like a section, but it can be projected to $\E=\je{0}$.

\begin{definition}\label{def:gensol}
  Let $\Jc{\ell}\subseteq\je{\ell}$ be an algebraic differential equation
  in $n$ independent variables.  A \emph{generalised solution} of
  $\Jc{\ell}$ is an $n$-dimensional submanifold $\N\subseteq\Jc{\ell}$ such
  that $T\N\subseteq\V[\Jc{\ell}]|_{\N}$.  A \emph{geometric solution} of
  $\Jc{\ell}$ is the projection $\pi^{\ell}_{0}(\N)\subseteq\E$ of a
  generalised solution $\N$.
\end{definition}

If the section $\sigma:\X\rightarrow\E$ defines a solution of $\Jc{\ell}$,
then $\im{j_{\ell}\sigma}$ is automatically a generalised solution with
$\im{\sigma}$ as the corresponding geometric solution; this follows
immediately from the definition of the Vessiot distribution. However, if
the differential equation $\Jc{\ell}$ has geometric singularities as
defined below, then not every geometric solution is the image of a section
$\sigma:\X\rightarrow\E$ (in fact, generally it is not even a manifold).

\section{Singularities of General Differential Equations}
\label{sec:singgde}

In classical analysis, one usually studies singularities like a blow-up or
a shock.  Thus the singular behaviour refers to an individual solution and
consists of either the solution itself or some derivative of it becoming
infinite at some finite point $\xv\in\X$. By contrast, we study
singularities of the differential system $S$ itself: we define
singularities as points $\rho\in\Jc{\ell}$ for some sufficiently high
order~$\ell$ such that generalised solutions in the sense of
Definition~\ref{def:gensol} in the neighbourhood show a ``special''
behaviour.  If $\Jc{\ell}$ is a differential equation of finite type, then
we expect that on any sufficiently small neighbourhood of a regular point
$\rho\in\Jc{\ell}$ a unique foliation of the neighbourhood by generalised
solutions exists and that all generalised solutions are the image of
prolonged classical solutions.  If the equation is not of finite type, then
around regular points still such foliations exist, but they are no longer
unique. In fact, infinitely many foliations exist.

\begin{definition}\label{def:regsing}
  Let $\Jc{\ell}\subseteq\je{\ell}$ be a locally integrable algebraic
  differential equation in $n$ independent variables. A non-smooth point
  $\rho\in\Jc{\ell}$ is called an \emph{algebraic singularity} of
  $\Jc{\ell}$. A smooth point $\rho\in\Jc{\ell}$ is called
  \begin{enumerate}	
  \item[(i)] \emph{regular}, if a metric open neighbourhood
    $\rho\in\U\subseteq\Jc{\ell}$ exists such that the Vessiot distribution
    $\V[\Jc{\ell}]$ is regular on $\U$ and can be decomposed as
    $\V[\Jc{\ell}]=\Nc{\ell}{}\oplus\H$ with an $n$-dimensional,
    transversal, involutive, smooth distribution $\H\subseteq T\U$;
  \item[(ii)] \emph{regular singular}, if a metric open neighbourhood
    $\rho\in\U\subseteq\Jc{\ell}$ exists such that the Vessiot distribution
    $\V[\Jc{\ell}]$ is regular on $\U$ but at the point $\rho$ no
    $n$-dimensional complement to the symbol $\Nc{\ell}{\rho}$ exists,
    i.\,e.\ we have $\dim{\V_{\rho}[\Jc{\ell}]}-\dim{\Nc{\ell}{\rho}}<n$;
  \item[(iii)] \emph{irregular singular}, if the Vessiot spaces do not form
    a regular distribution on any metric open neighbourhood
    $\rho\in\U\subseteq\Jc{\ell}$; i.\,e.\ any such neighbourhood contains
    at least one point $\bar\rho$ such that
    $\dim{\V_{\bar\rho}[\Jc{\ell}]}<\dim{\V_{\rho}[\Jc{\ell}]}$.
  \end{enumerate}
  An irregular singularity $\rho\in\Jc{\ell}$ is called \emph{purely
    irregular}, if an $n$-dimensional complement to the symbol space
  $\Nc{\ell}{\rho}$ exists, i.\,e.\
  $\dim{\V_{\rho}[\Jc{\ell}]}-\dim{\Nc{\ell}{\rho}}=n$.
\end{definition}

The notion of a purely irregular singularity is new and becomes necessary
only for equations not of finite type.  At generic singularities such a
distinction is not necessary: generically the dimension of the symbol space
$\Nc{\ell}{\rho}$ jumps at a singularity only by one and in this case any
irregular singularity is automatically purely irregular. At points where no
$n$-dimensional transversal complement to the symbol space exists, not even
a formal power series solution can exist. Hence pure irregularity is
important for any kind of solution theory around singularities.

\begin{example}
  As a concrete example where all different types of points appearing in
  the above definition occur, we consider the following second-order system
  of semilinear partial differential equations for one unknown function $u$
  in two independent variables $x$, $y$:
  \[
    \begin{aligned}
        x^2u_{xx} + xu_x + (x-1)^2u &= 0\,,\\
        (1-y^2)u_{yy} + 2yu_y + 2u &= 0\,.
    \end{aligned}
  \]
  If we consider the algebraic differential equation
  $\mathcal{J}_2\subset J_2\pi$ defined by it, then one must distinguish
  seven different cases in the analysis of the linear system defining the
  Vessiot spaces:
  \begin{enumerate}
  \item \emph{Regular points} on $\mathcal{J}_2$ are characterised by the
    conditions $x\neq0$ and $y^2-1\neq0$. They have a three-dimensional
    Vessiot space.
  \item Points where $x=0$, $y^2-1\neq0$ and either $u_x\neq0$ or
    $u_y\neq0$ are \emph{regular singular}. They also possess a
    three-dimensional Vessiot space. As the coefficients $a_1$ and $a_2$ in
    (\ref{eq:ansvess}) must satisfy the equation $2u_xa_1+u_ya_2=0$, only a
    one-dimensional transversal complement exists.
  \item Basically the same holds for points where $y^2-1=0$, $x\neq0$ and
    either $yu_x+u_{xy}\neq0$ or $u\neq0$: they are \emph{regular singular}
    and have a three-dimensional Vessiot space with a one-dimensional
    transversal complement defined by the equation
    $(yu_x+u_{xy})a_1-2ua_2=0$.
  \item Points where $x=0$, $y^2-1=0$ and either $u_x\neq0$ or
    $yu_{xy}+u_x\neq0$ are \emph{irregular singularities} which are not
    purely irregular: the Vessiot space is four-dimensional with a
    one-dimensional transversal complement defined by the condition
    $a_1=0$.
  \item Points where $x=0$, $u_x=0$, $u_y=0$ and $y^2-1\neq0$ are
    \emph{purely irregular singular} and possess a four-dimensional Vessiot
    space defined by the equation $(y^2-1)b_{02}-2yu_{xy}a_1=0$.
  \item The same behaviour is shown by points with $y^2-1=0$, $u=0$,
    $u_y=0$, $x\neq0$, but with the Vessiot space defined by the equation
    $x^2b_{20}+(x^2-xy-2x-1)u_xa_1=0$.
  \item Finally, the points where $x=0$, $y^2-1=0$, $u_xy=0$ and $u=0$ are
    also \emph{purely irregular singular} but now with a five-dimensional
    Vessiot space.
  \end{enumerate}
  Note that the cases 2, 3 and 4 do not correspond to an algebraic jet set
  but the union of two such sets, because of the disjunctions in their
  defining conditions.  Hence, if one applies the algorithm we present in
  the next section to this example, then one obtains actually $10=7+3$
  cases.
\end{example}

% \begin{remark}\label{rem:algsing}
%   So far, hardly anything is known about the local solution behaviour
%   around algebraic singularities. In this work, we will restrict to simply
%   distinguishing them from the geometric singularities.  For this purpose
%   we will essentially employ the classical Jacobian criterion.
% \end{remark}

\begin{remark}\label{rem:singfintype}
  For differential equations of finite type (and thus in particular for all
  not underdetermined ordinary differential equations),
  Definition~\ref{def:regsing} can be considerably simplified, as it is no
  longer necessary to consider neighbourhoods. For a passive equation of
  finite type, it is a priori clear that the expected dimension of the
  Vessiot space at a regular point is $n$. Thus singularities can be
  recognised by a simple comparison with this value (see \cite{wms:aims}
  for such a definition of regular and irregular singularities). Our more
  complicated approach via neighbourhoods is the prize to be paid for the
  fact that Definition~\ref{def:regsing} is to our knowledge the first
  attempt to provide a systematic taxonomy of the singularities of
  arbitrary systems of partial differential equations. We do not claim that
  our definition provides already a complete taxonomy, however it appears
  very natural from the point of view of the geometric theory of
  differential equations, as it takes all fundamental geometric objects
  (Vessiot and symbol spaces) into account.
\end{remark}

\begin{remark}\label{rem:regular}
  If we ignore the requirement that in the neighbourhood of a regular point
  the complement $\H$ must be involutive, then the three cases in
  Definition~\ref{def:regsing} correspond to the analysis of the linear
  system \eqref{eq:vessdist}. A necessary condition for a point
  $\rho\in\Jc{\ell}$ to be regular is that the symbol matrix $M_\ell(\rho)$
  has its maximal possible rank and that this rank coincides with the
  maximal possible rank of the augmented matrix
  $\bigl(D(\rho)\mid M_\ell(\rho)\bigr)$.  At a regular singular point, the
  augmented matrix has still the maximal possible rank, but the rank of the
  symbol matrix has dropped.  At an irregular singular point even the rank
  of the augmented matrix has dropped.

  In the case of ordinary differential equations, any complement $\H$ can
  only be one-di\-men\-sio\-nal and thus is trivially involutive wherever it
  defines a regular distribution.  Hence for ordinary differential
  equations Definition~\ref{def:regsing} provides a complete taxonomy of
  all points on $\Jc{\ell}$.  For partial differential equations, it is in
  general difficult to prove the involutivity of $\H$ around points where
  the above mentioned necessary condition for a regular point is satisfied.

  We always study algebraic jet sets coming from simple
  differential systems produced by a differential Thomas decomposition.
  Here it is possible to prove for generic points that they are
  regular.  For the other points satisfying the above necessary condition,
  two possibilities arise. If they are regular (which we are not able to
  prove), then there are (prolonged) solutions going through them. By the
  properties of the differential Thomas decomposition, they must belong to
  the solution set of another simple differential system arising in the
  decomposition. Hence one can argue that they are irrelevant in the
  analysis of the given simple differential system.  If they are not
  regular, then they fall outside the taxonomy of Definition~\ref{def:regsing}.
  It is unclear whether this case is actually possible;
  at least we do not know of any concrete example where such points
  appear. They could be related to novel kinds of singular behaviour that
  only exist in partial differential equations, but they also could simply
  be accidentially introduced by taking the Zariski closure.
\end{remark}

\begin{remark}\label{rem:geosing}
  Regular and irregular singularities may be considered as \emph{geometric
    singularities} in the sense that they represent the critical points of
  the restriction of the canonical projection
  $\pi^{\ell}_{\ell-1}:\je{\ell}\rightarrow\je{\ell-1}$ to the considered
  subset $\Jc{\ell}$, i.\,e.\ of the map
  $\hat{\pi}^\ell_{\ell-1} :
  \Jc{\ell}\rightarrow\pi^\ell_{\ell-1}(\Jc{\ell}) \subseteq \je{\ell-1}$.
  In other words, they are the points $\rho$ where the tangent map
  $T_{\rho}\hat{\pi}^\ell_{\ell-1}$ is not surjective.  Indeed, at smooth
  points the symbol spaces are the kernels of the restricted projection
  $\hat{\pi}^\ell_{\ell-1}$. Hence, one may say that geometric
  singularities are those points where the dimension of the symbol space
  jumps. This is the classical approach to define singularities of implicit
  ordinary differential equations, as one can find it e.\,g.\ in
  \cite{via:geoode}.
\end{remark}

Definition~\ref{def:regsing} is really meaningful only, if we can show that
the regular points form a Zariski dense subset and thus really represent
the ``regular'' behaviour.  The main problem in proving this fact consists
in establishing the existence of a smooth distribution $\H$ possessing all
the required properties.  As this is much easier for systems of finite
type, we treat this case separately.

\begin{proposition}\label{prop:regdensefintype}
  Let $S$ be a simple differential system, with respect to a Riquier
  ranking, comprising no equation of an
  order greater than $\ell\in\NN$ for which the associated algebraic
  differential equation $\Jc{\ell}(S)$ defined in (\ref{eq:R}) is of finite
  type.  Then the regular points in its Zariski closure
  $\overline{\Jc{\ell}(S)}$ contain a Zariski open and dense subset.
\end{proposition}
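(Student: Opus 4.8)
The plan is to exploit that for an equation of finite type the required complement $\H$ is forced: at a smooth point the Vessiot space decomposes as $\V_\rho[\Jc{\ell}(S)]=\Nc{\ell}{\rho}\oplus\H_\rho$, and on the dense subset $\F_\ell$ where the symbol cone vanishes we have $\V_\rho[\Jc{\ell}(S)]=\H_\rho$ with no choice left. So the only genuine tasks are (a) to produce a Zariski open and dense subset of $\overline{\Jc{\ell}(S)}$ consisting of smooth points on which the Vessiot distribution is regular of the expected dimension $n$, and (b) to verify that this distribution is involutive there. Step (a) is cheap: by Lemma~\ref{lem:SolIl} we have $\overline{\Jc{\ell}(S)}=\Sol(\I_\ell(S))$, which by Remark~\ref{rem:sat} is equidimensional; intersecting the smooth locus (Zariski open dense, giving a connected complex manifold by \cite[Sect.~0.2]{gh:pag}) with $\F_\ell$ and with the set $\O_V$ from Proposition~\ref{prop:vessdist} on which $\V[\Jc{\ell}(S)]$ is a smooth regular distribution, and further with the Zariski open dense set $\Rc{\ell}$ from Proposition~\ref{prop:locint} on which no separant or initial of an equation in $S$ vanishes, we obtain a Zariski open and dense $\U_0\subseteq\overline{\Jc{\ell}(S)}$ on which $\V[\Jc{\ell}(S)]$ is smooth, regular, and (since the symbol vanishes on $\F_\ell$) of dimension exactly $n$ and transversal to $\pi^\ell$.

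The heart of the argument is involutivity of $\V[\Jc{\ell}(S)]=\H$ on a suitable dense subset of $\U_0$. Here I would invoke the geometric theory of differential equations: the algebraic jet set $\Jc{\ell}(S)$ comes from a \emph{simple} differential system with respect to a Riquier ranking, hence from a passive system, and passivity is precisely the differential-algebraic incarnation of formal integrability / involutivity. Concretely, on $\Rc{\ell}$ the equation $\Jc{\ell}(S)$ restricted to a neighbourhood is a smooth fibred submanifold of $\je{\ell}$ whose prolongations are again smooth (no separants or initials vanish, so the implicit function theorem applies uniformly), and Remark~\ref{rem:vessprol} identifies the linear system \eqref{eq:vessdist} as the homogenised prolongation system; since the system is passive, this prolongation system is solvable with $\av$ free, so for each $i$ the vector $\bar\av=1_i$, $\bar b^\alpha_\mu=u^\alpha_{\mu+1_i}$ lies in $\H_\rho$ and depends smoothly on $\rho$. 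These $n$ vector fields span $\H$; their Lie brackets again satisfy the contact conditions and, by formal integrability of the passive system (equivalently, by the standard fact that the Vessiot distribution of an involutive/formally integrable equation is involutive, see \cite[Sect.~9.5]{wms:invol}), lie again in $\H$. This gives involutivity on $\Rc{\ell}\cap\U_0$, which is Zariski open and dense, and thus every point of this set is regular in the sense of Definition~\ref{def:regsing}(i).

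The main obstacle is the step just described: one must be careful that the geometric notions of prolongation and involution, which in \cite{wms:invol} are developed for smooth fibred submanifolds, genuinely apply to the locally closed set $\Jc{\ell}(S)$ restricted to the smooth locus where no initials or separants vanish — this is exactly the locus where $\Jc{\ell}(S)$ \emph{does} behave like a nice manifold, so the transfer is legitimate, but it requires pinning down that the simple system $S_{\leq\ell}$ (Remark~\ref{rmrk:genalgideal}), after removing the vanishing locus of the initials and separants, yields locally the same manifold whose passivity is guaranteed by construction of the differential Thomas decomposition. Once that identification is made, involutivity of $\H$ is a consequence of passivity via \cite[Cor.~11]{glhr:tdds} and the Vessiot-theoretic results recalled in Appendix~\ref{sec:geojet}, and the proof concludes by noting that the finitely many Zariski closed conditions excluded above still leave a Zariski open and dense subset of the irreducible-componentwise equidimensional variety $\overline{\Jc{\ell}(S)}$.
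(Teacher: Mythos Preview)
Your outline is essentially correct, but you are taking the heavy route that the paper reserves for the general Theorem~\ref{thm:regdense}, and in doing so you miss the elementary shortcut that is the whole point of isolating the finite-type case. The paper's argument does not touch passivity, symbol modules, or the Vessiot-connection machinery at all: it simply observes that by Proposition~\ref{prop:locint} every point of the dense open set $\Rc{\ell}$ lies on the prolongation of a classical solution, and that by Remark~\ref{rem:fintype} this solution is \emph{unique} for an equation of finite type. Uniqueness forces the images of prolonged solutions to be pairwise disjoint near any such point, so they foliate a neighbourhood; the tangent spaces to the leaves are then exactly the Vessiot spaces, and Frobenius converts integrability into involutivity. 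That is the whole proof.

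Your route via formal integrability works, but one step deserves a warning. You write ``the Vessiot distribution of an involutive/formally integrable equation is involutive'' and cite \cite[Sect.~9.5]{wms:invol}. As stated this is false: for equations not of finite type the Vessiot distribution contains the symbol and is almost never involutive; what \cite[Thm.~9.6.11]{wms:invol} gives is only the existence of an involutive \emph{complement} $\H$ to the symbol. Your argument is rescued precisely by the finite-type hypothesis ($\Nc{\ell}{}=0$ forces $\H=\V$), but you should make this explicit rather than appeal to a general principle that does not hold. Once that is fixed, your argument is a specialisation of the proof of Theorem~\ref{thm:regdense} (where the vanishing symbol is trivially involutive, so the regularity/Janet-basis considerations there collapse); it is correct but considerably less transparent than the foliation-by-unique-solutions argument the paper gives.
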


\begin{proof}
  By Proposition~\ref{prop:locint}, $\overline{\Jc{\ell}(S)}$ is a locally
  integrable algebraic differential equation. In the proof of that
  proposition it was shown that every point $\rho$ in a Zariski open and
  dense subset $\Rc{\ell}\subseteq\Jc{\ell}(S)$ lies in the image of a
  prolonged classical solution~$\sigma$. In Remark~\ref{rem:fintype}, it
  was discussed that for an equation of finite type this solution~$\sigma$
  is uniquely determined by~$\rho$. This implies in particular that for
  different such solutions the images of their prolongations cannot
  intersect in a sufficiently small neighbourhood of~$\rho$. Hence these
  images define a foliation of such a neigbourhood with $n$-dimensional
  leaves and the tangent spaces of the points on the leaves are just the
  Vessiot spaces there. This observation implies that the Vessiot
  distribution restricted to this neighbourhood is integrable and hence by
  the Frobenius Theorem involutive. Therefore all smooth points
  $\rho\in\Rc{\ell}$ are regular in the sense of
  Definition~\ref{def:regsing}.
\end{proof}

Note that this proof also tells us precisely the local solution behaviour
near a regular point: the neighbourhood of the point is foliated by
$n$-dimensional transversal leaves which are generalised solutions
projecting on geometric solutions which are the images of classical
solutions. The generalisation to arbitrary systems requires the use of the
Vessiot theory of differential equations introduced originally in
\cite{ves:int}.  A modern presentation relating it to the geometric theory
of differential equations can be found in \cite{df:diss} (see also
\cite{wms:vessconn2} or \cite[Sects.~9.5/6]{wms:invol}).  These references
are concerned with the existence of flat Vessiot connections. The
horizontal bundle of such a connection is nothing but a smooth distribution
$\H$ with all the properties required in the definition of a regular point.

\begin{theorem}\label{thm:regdense}
  Let $S$ be a simple differential system, with respect to a Riquier
  ranking, comprising no equation of an order greater than $\ell\in\NN$ and
  $\Jc{\ell}(S)$ the associated algebraic differential equation defined in
  (\ref{eq:R}). Then the regular points in the Zariski closure
  $\overline{\Jc{\ell}(S)}$ contain a Zariski open and dense subset.
\end{theorem}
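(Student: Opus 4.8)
The plan is to reduce the general case to the finite-type case of Proposition~\ref{prop:regdensefintype} by a prolongation argument. The key observation is that although $\Jc{\ell}(S)$ itself need not be of finite type, the symbol of a simple differential system stabilises after finitely many prolongations, so that for some prolongation order $k$ the algebraic differential equation $\Jc{\ell+k}(S)$ has an \emph{involutive} symbol. Once the symbol is involutive, the Vessiot theory of \cite{df:diss} (see also \cite[Sects.~9.5/6]{wms:invol}) guarantees the existence of flat Vessiot connections on a suitable open subset, i.\,e.\ of a smooth distribution $\H$ of the right dimension that is transversal and involutive and complements the symbol $\Nc{\ell+k}{}$ inside $\V[\Jc{\ell+k}(S)]$. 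So the first step is to invoke the prolongation/involution theory to pass to a sufficiently high order $\ell+k$ where the symbol is involutive and $\overline{\Jc{\ell+k}(S)}$ is (by Proposition~\ref{prop:locint}) still a locally integrable algebraic differential equation.

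The second step is to identify, on $\Jc{\ell+k}(S)$, a Zariski open and dense subset of \emph{regular} points. Take the Zariski open dense set $\Rc{\ell+k}\subseteq\Jc{\ell+k}(S)$ of smooth points at which no separant or initial of an equation of $S$ vanishes (as in the proof of Proposition~\ref{prop:locint}); on this set the Vessiot distribution is smooth and regular of constant rank by Proposition~\ref{prop:vessdist}, and the symbol distribution $\Nc{\ell+k}{}$ is likewise smooth and regular after possibly shrinking to a smaller Zariski open dense subset. Because the symbol at order $\ell+k$ is involutive, the Cartan–Vessiot existence results apply near each such point and furnish the required involutive complement $\H$ on a metric neighbourhood; hence every point of this shrunk Zariski open dense subset is regular in the sense of Definition~\ref{def:regsing}(i). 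This shows the regular points of $\overline{\Jc{\ell+k}(S)}$ contain a Zariski open dense subset.

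The final step is to push the statement back down from order $\ell+k$ to order $\ell$. Here I would use the projection $\pi^{\ell+k}_{\ell}$, which by the Lemma preceding Remark~\ref{rem:sat} satisfies $\pi^{\ell+k}_{\ell}\bigl(\Jc{\ell+k}(S)\bigr)=\Jc{\ell}(S)$. One checks that regularity is compatible with this projection: a point $\rho\in\Jc{\ell}(S)$ lying under a regular point $\hat\rho\in\Jc{\ell+k}(S)$ is itself smooth, and the involutive complement $\H$ at $\hat\rho$ projects to an involutive transversal complement of the symbol at $\rho$ (the contact structure and Vessiot distribution are natural under $\pi^{\ell+k}_{\ell}$). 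Since the image of a Zariski open dense subset under the dominant morphism $\pi^{\ell+k}_{\ell}$ contains a Zariski open dense subset of the target, the regular points of $\overline{\Jc{\ell}(S)}$ contain a Zariski open dense subset, as claimed.

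I expect the main obstacle to be the first step: making precise that the symbol of a simple differential system becomes involutive after finitely many prolongations and that the Vessiot connection existence theory (stated in the regular, manifold setting) applies to the smooth locus $\Rc{\ell+k}$ of our algebraic jet set. One must verify that passivity of $S$ with respect to a Riquier ranking indeed yields, after the completion already performed by the Thomas decomposition, an involutive symbol at the smooth generic points, and that no extra integrability conditions can reappear. The compatibility of $\H$ with the projection in the last step is a routine but slightly technical check involving the explicit formulas \eqref{eq:ansvess}–\eqref{eq:DM} for the Vessiot distribution and the structure of the contact system on $\je{\ell+k}$ versus $\je{\ell}$.
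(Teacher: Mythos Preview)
Your overall strategy---establish involutivity of the symbol, then invoke the Vessiot connection existence theorem from \cite{wms:vessconn2,wms:invol}---is the right one, but the detour through a prolongation to order $\ell+k$ followed by a projection back to order $\ell$ is unnecessary and the projection step is not as routine as you suggest. The paper avoids both issues by showing that the symbol is \emph{already} involutive at order $\ell$ on the open dense set $\Rc{\ell}$: since the equations of a simple differential system form a Janet basis, their principal parts define at each $\rho\in\Rc{\ell}$ a polynomial Janet basis of the principal symbol module $\M[\rho]$ with generators of degree at most $\ell$; the induced free resolution \cite[Thm.~5.4.12, Rem.~5.4.13]{wms:invol} bounds the Castelnuovo--Mumford regularity of $\M[\rho]$ by $\ell$, and this is precisely the order at which the symbol becomes involutive \cite[Rem.~6.1.23]{wms:invol}. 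With formal integrability (from local integrability, Proposition~\ref{prop:locint}) and an involutive symbol in hand at order $\ell$, \cite[Thm.~3]{wms:vessconn2} gives the required involutive transversal complement $\H$ on a neighbourhood of each point of $\Rc{\ell}$ directly, with no prolongation needed.

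Regarding your projection step: the claim that ``the involutive complement $\H$ at $\hat\rho$ projects to an involutive transversal complement of the symbol at $\rho$'' hides a genuine difficulty. The complement $\H$ is not unique for systems not of finite type, and there is no reason a given choice of $\H$ on a neighbourhood $\hat\U\subset\Jc{\ell+k}$ should be constant along the fibres of $\pi^{\ell+k}_{\ell}|_{\Jc{\ell+k}}$, so the pushforward need not be a well-defined distribution on a neighbourhood of $\rho$ in $\Jc{\ell}$. One could try to bypass this by projecting the foliation (integral manifolds $\im j_{\ell+k}\sigma$ project to $\im j_{\ell}\sigma$), but then you must argue that the projected leaves still form a foliation of a full neighbourhood in $\Jc{\ell}$ rather than overlapping or covering only a thin subset; this is exactly where the non-finite-type case is delicate. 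Finally, a small point: your opening sentence speaks of reducing to the \emph{finite-type} case of Proposition~\ref{prop:regdensefintype}, but the rest of your argument (correctly) aims only at an \emph{involutive} symbol, which is a much weaker condition; prolongation never makes a non-finite-type system finite type.
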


\begin{proof}
  As in the proof of Proposition~\ref{prop:regdensefintype}, we consider
  again the Zariski open and dense subset $\Rc{\ell}\subseteq\Jc{\ell}(S)$.
  As a smooth point, any point $p\in\Rc{\ell}$ lies on exactly one
  irreducible component of $\Jc{\ell}(S)$.  The intersection of $\Rc{\ell}$
  with this irreducible component is a manifold which, by the proof of
  Proposition~\ref{prop:locint}, defines a formally integrable differential
  equation in the sense of the geometric theory, since local integrability
  trivially entails formal integrability.  The equations in a simple system
  form by definition a (differential) Janet basis and it is easy to see
  that consequently their principal parts (introduced in
  Appendix~\ref{sec:geojet}) define at any point $\rho\in\Rc{\ell}$ a
  (polynomial) Janet basis of the principal symbol module $\M[\rho]$.  The
  maximal degree of a generator in this basis is at most $\ell$. By
  \cite[Thm.~5.4.12, Rem.~5.4.13]{wms:invol}, this Janet basis induces a
  free resolution of $\M[\rho]$ and the form of this resolution implies
  that the Castelnuovo-Mumford regularity of $\M[\rho]$ is at most $\ell$.
  By \cite[Rem.~6.1.23]{wms:invol}, this implies that the symbol
  $\Nc{\ell}{\rho}$ is involutive at any point $\rho\in\Rc{\ell}$, as the
  order at which a symbol becomes involutive is determined by the
  regularity of the principal symbol module.  Hence the manifold defines
  even an involutive differential equation in the sense of the geometric
  theory.  Now \cite[Thm.~3]{wms:vessconn2} (or equivalently
  \cite[Thm.~9.6.11]{wms:invol}) asserts the existence of a smooth
  distribution $\H$ with the required properties in a neighbourhood of $p$,
  so that $p$ is indeed a regular point.
\end{proof}

It should be emphasised that the distribution $\H$ appearing at the end of
the proof is never unique for a system which is not of finite type. Again,
each particular choice of a distribution $\H$ induces a foliation of a
neighbourhood of the regular point with $n$-dimensional transversal leaves
which are the images of generalised solutions coming from classical
solutions. However, for a system not of finite type there always exist
infinitely many such choices and hence infinitely many different
foliations. Nevertheless, we may still say that regular points are
characterised by the existence of at least one such foliation.

\begin{example}\label{exmp:clairaut}
  It should be noted that the notions introduced in
  Definition~\ref{def:regsing} are relative in the sense that they
  obviously depend on the choice of the differential equation
  $\Jc{\ell}$. In some situations one may have more than one option and
  then obtains different results for certain points. As a simple concrete
  example, we may consider the Clairaut equation $u=xu'+f(u')$; the
  corresponding algebraic jet set is shown in Figure~\ref{fig:clair} in
  blue. It represents a classical instance of a differential equations with
  a singular integral. Its general solution is given by the straight lines
  $u(x)=cx+f(c)$ with a parameter $c$ (some lines are shown in green in the
  figure). Their envelope is the singular integral given parametrically by
  $x(\tau)=-f'(\tau)$, $u(\tau)=-\tau f'(\tau)+f(\tau)$ (shown in red). The
  singular integral is the sole solution of the overdetermined system
  $u=xu'+f(u')$ and $f'(u')+x=0$ (the separant of the first equation). If
  we choose as $\Jc{1}$ the whole blue surface, then all points on the
  singular integral are irregular singularities, as the Vessiot spaces are
  two-dimensional there. If we choose instead only the curve defined by the
  prolonged singular integral (which represents an algebraic differential
  equation in its own right\footnote{With the notation from below, the
    Clairaut equation can be decomposed into two primary components: the
    general solution and the singular integral. This decomposition can be
    seen when prolonging to order two.}), then all points on it are
  regular, as now for the overdetermined system the Vessiot space is always
  one-dimensional and coincides with the tangent space of the curve.  This
  effect is captured in Definition~\ref{def:regsing} by the use of a metric
  open neighbourhood of the considered point. Depending on the choice of
  $\Jc{\ell}$, the dimension of the neighbourhood as a smooth manifold may
  vary and the neighbourhood decides what is considered as regular and what
  as singular.

\begin{figure}[htb]
   \centering
   \includegraphics[height=5cm]{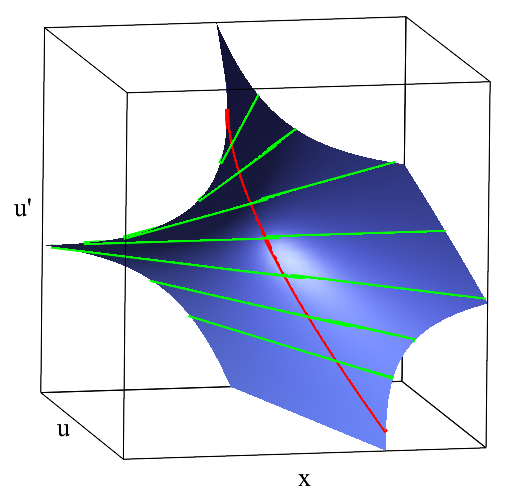}\quad
   \includegraphics[height=5cm]{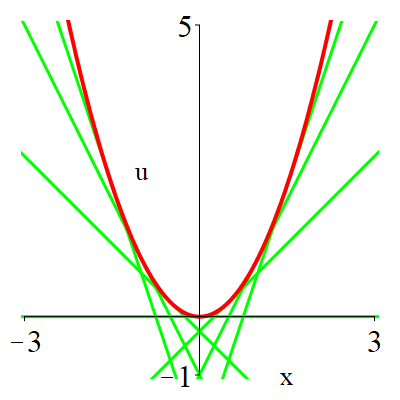}
   \caption{Clairaut equation for $f(s)=-\tfrac{1}{4}s^2$ with the singular
     integral in red. Left: generalised solutions in $J_{1}\pi$. Right:
     solution graphs in $x$-$u$ plane.\label{fig:clair}}
\end{figure}

\end{example}

\section{Regularity Decomposition of a Differential System}
\label{sec:regularitydecomposition}

The geometric theory of differential equations considers usually
exclusively ``regular'' equations, although it is not so easy to provide a
rigorous definition of what this regularity should be and even harder to
verify effectively whether or not a given equation is regular.  Very often,
one only finds generic statements that all assertions are valid outside of
some (unspecified) hypersurface (see e.\,g.\ \cite{bm:sdi} and references
therein).  We now define first a rigorous notion of a regular algebraic jet
set.  For such jet sets, we can extend the pointwise decomposition
(\ref{eq:Vdecomp}) to a global one: $\V[\Jc{\ell}]= \Nc{\ell}{}\oplus\H$
with some smooth vector bundle $\H$.  We study the generalisation to a
regular differential equation in the sense of the geometric theory of
differential equations in the following Section~\ref{sec:regde}.

\begin{definition}\label{def:regjetset}
  An algebraic jet set $\Jc{\ell}\subseteq\je{\ell}$ is
  \emph{regular}, if
  \begin{enumerate}[label=\textup{(\roman*)},
                    itemsep=0pt,parsep=0pt,topsep=0pt]
  \item it consists only of smooth points, i.\,e.\ $\Jc{\ell}$ is a
    smooth manifold,
  \item its Vessiot distribution $\V[\Jc{\ell}]$ defines a smooth
    vector bundle over $\Jc{\ell}$ and
  \item its symbol $\Nc{\ell}{}$ defines a smooth vector bundle over
    $\Jc{\ell}$. 
  \end{enumerate}
\end{definition}

Let $S$ be a differential system.  As discussed in Section~\ref{sec:cag},
as a first step we compute a differential Thomas decomposition of $S$ into
simple differential systems each of which we then treat
separately. Thus we assume from now on that $S$ is already a simple
differential system. We choose a sufficiently high order $\ell$ and
consider the associated algebraic jet set
$\overline{\Jc{\ell}(S)}\subset\je{\ell}$.  In general, it might be a
reducible variety. As any point contained in the intersection of two
irreducible components of $\overline{\Jc{\ell}(S)}$ is automatically an
algebraic singularity, we prefer to study each irreducible component
separately. We then want to express each irreducible component as a
disjoint union of \emph{regular} algebraic jet sets.

\begin{definition}\label{def:regdecomp}
  Let $S\subset\D$ be a simple differential system and
  $\overline{\Jc{\ell}(S)}\subset\je{\ell}$ the associated algebraic jet
  set in a sufficiently high order~$\ell$. Let furthermore
  $\overline{\Jc{\ell}(S)}=\Jc{\ell,1}\cup\cdots\cup\Jc{\ell,t}$ be its
  decomposition into irreducible varieties.  A \emph{regularity
    decomposition} of the variety $\Jc{\ell,k}$ represents it as a disjoint
  union of finitely many regular algebraic jet sets
  $\Jcp{\ell,k}{1},\dots,\Jcp{\ell,k}{r}$, the \emph{regularity components}
  of $\Jc{\ell,k}$, and of the set
  $\mathrm{ASing}\bigl(\overline{\Jc{\ell}(S)}\bigr)$ of algebraic
  singularities.
\end{definition}

If we classify the points on the irreducible variety $\Jc{\ell,k}$
according to Definition~\ref{def:regsing}, then if one point on a
regularity component $\Jcp{\ell,k}{i}$ is a regular (irregular)
singularity, then all other points on this component are regular
(irregular) singularities, too.  Indeed, Definition~\ref{def:regjetset}
implies that at all points on a regular algebraic jet set the symbol and
the Vessiot space, respectively, have the same dimension. The situation is
more involved for regular points (of partial differential equations) as
discussed in Remark~\ref{rem:regular}.  However, as a consequence of
Remark~\ref{rem:sat} and Theorem~\ref{thm:regdense}, we may conclude that
the regular points contain on each prime component $\J_{\ell,k}$ a Zariski
open and dense subset.  If a point lies in the intersection of several
irreducible components, then we classify it separately with respect to each
of these components. It is well possible that one obtains here different
results (see Examples~\ref{exmp:clairaut} or \ref{ex:sphereplane} for
concrete instances).

We now prove the existence of regularity decompositions by providing an
algorithm for their construction. For at least one component of the
obtained decomposition (which contains a Zariski open and dense subset), we
prove that it consists of regular points. As discussed in
Remark~\ref{rem:regular}, we cannot exclude the possibility that in some
other regularity components the Vessiot and the symbol space have at all
points the dimension expected for a regular point, but we are unable to
prove the involutivity of the complement $\H$. We say that such points are
of \emph{unknown type}.

The first step of our algorithm consists of determining a generating set
$\{ p_1, \ldots, p_s \}$ of the algebraic ideal $\I_\ell(S)$ according to
Remark~\ref{rmrk:genalgideal}. As second step, we determine the minimal
prime decomposition $\I_\ell(S)=\bigcap_{k=1}^t\I_{\ell,k}$ of the ideal
$\I_\ell(S)$ which is radical by definition.  According to
Lemma~\ref{lem:SolIl}, $\overline{\Jc{\ell}(S)}=\Sol{(\I_\ell(S))}$ and, by
construction, $\Sol{(\I_\ell(S))}=\bigcup_{k=1}^t\Sol{(\I_{\ell,k})}$.  We
then determine for each irreducible component $\Sol{(\I_{\ell,k})}$
separately a regularity decomposition.

For the determination of these regularity decompositions, we exploit that
our taxonomy of regular and singular points (Definition~\ref{def:regsing})
is mainly based on the properties of the linear system \eqref{eq:vessdist}
determining the Vessiot distribution.  If
$\{ p_{k,1}, \ldots, p_{k,s_k} \}$ is a generating set of the prime
component $\I_{\ell,k}$, then we use these polynomials for setting up the
linear system \eqref{eq:vessdist}, as it simply encodes a condition of
tangency to the irreducible component $\Jc{\ell,k}=\Sol{(\I_{\ell,k})}$.

In addition, we set up a second linear system for the detection of the
algebraic singularities defined by
\begin{equation}\label{eq:jacobian}
  \JJ(p_{k,r}):=
  \left( \sum_{|\mu| = \ell} \sum_{\alpha} c_{\mu}^{\alpha}
    \partial_{u^{\alpha}_{\mu}}  + \sum_j d^j \partial_{x^j} \right) p_{k,r}=0, 
  \quad r=1,\dots ,s_k.
\end{equation}
The left hand side is obtained by multiplying the Jacobian matrix of
$p_{k,1},\dots, p_{k,s_k}$ by the vector of auxiliary indeterminates
$c_{\mu}^{\alpha}$ and $d^j$. The equations in the combined linear system
may be considered as elements of the extended polynomial ring
$\mathcal{D}_{\ell}^{{ \rm\, ex}}=\mathcal{D}_{\ell}[\av,\bv,\cv,\dv]$
where we have adjoined the auxiliary indeterminates $\av$, $\bv$ of the
ansatz \eqref{eq:ansvess} and $\cv$, $\dv$ of
\eqref{eq:jacobian}. Furthermore, we consider this combined linear system
only at points on $\Jc{\ell,k}$ and thus add the equations
$p_{k,1}, \ldots, p_{k,s_k}\in
\mathcal{D}_{\ell}\subset\mathcal{D}_{\ell}^{{\rm\,ex}}$. We compute an
algebraic Thomas decomposition of the combined system in
$\mathcal{D}_{\ell}^{{\rm\,ex}}$ using an ordering satisfying the following
conditions: (i) $\dv>\cv>\bv>\av>\uv>\xv$, (ii)~restricted to the variables
$\uv$ the ordering corresponds to an orderly ranking (cf.\
Subsection~\ref{sec:diffideal}) and (iii) the variables $c^\alpha_\mu$ and
$b^\alpha_\mu$, respectively, are ordered among themselves in the same way
as the derivatives $u^\alpha_\mu$.

Let $S_{k,i}^{{\rm ex}}$ be one of the resulting simple algebraic
systems. If $S_{k,i}^{{\rm ex}}$ has less than
$\mathrm{codim} \Sol{(\I_{\ell,k})}$ equations with leader among the
auxiliary indeterminates $\cv$, $\dv$, we remove all equations with leader
among $\av$, $\bv$, $\cv$, $\dv$ and obtain the simple system $S_{k,i}$
over $\mathcal{D}_{\ell}$ which contributes $\Sol{(S_{k,i})}$ to
$\mathrm{ASing}\bigl(\overline{\Jc{\ell}(S)}\bigr)$. Otherwise, again
removing all equations with leader among $\av$, $\bv$, $\cv$, $\dv$, we
obtain a simple algebraic system $S_{k,i}$ in $\mathcal{D}_{\ell}$ which
contributes the regularity component $\Jcp{\ell,k}{i}=\Sol{(S_{k,i})}$. In
a more formal language, we arrive at
Algorithm~\ref{alg:simpleregularitydecomposition}.

\begin{algorithm}[Regularity Decomposition of a Simple Differential System] \quad
\label{alg:simpleregularitydecomposition}
\begin{algorithmic}[1]
\setlength{\baselineskip}{1.2\baselineskip}
\INPUT a simple differential system $S$ over $K\{ U \}$
and a sufficiently high order $\ell \in \NN$
\OUTPUT a regularity decomposition for each prime component
$\mathcal{I}_{\ell,k}(S)$ of the ideal $\mathcal{I}_\ell(S) \subset \mathcal{D}_{\ell}$
\ALGORITHM
\STATE compute a generating set $\{ p_1, \ldots, p_s \}$
of the radical ideal $\mathcal{I}_{\ell}(S)$ according to Remark~\ref{rmrk:genalgideal}
\STATE compute a prime decomposition $\mathcal{I}_{\ell}(S) = \mathcal{I}_{\ell,1}(S) \cap \ldots \cap \mathcal{I}_{\ell,t}(S)$ of $\mathcal{I}_{\ell}(S)$ and a generating set $\{ p_{k,1}, \ldots, p_{k,s_k} \}$ for each prime component $\mathcal{I}_{\ell,k}(S)$
\FOR{$k \in \{ 1, \ldots, t \}$}
\STATE compute an algebraic Thomas decomposition  $S_{k,1}^{\rm ex}$, \ldots, $S_{k,r_k}^{{\rm ex}}$ with respect to a total order $\dv >\cv >\bv > \av > \uv > \xv$ satisfying the above mentioned conditions of the
algebraic system
\begin{equation}\label{eq:determVessiot}
\left\{
\begin{array}{rcll}
\JJ(p_{k,j}) & = & 0\,, \\
\vv(p_{k,j}) & = & 0\,, \\ 
p_{k,j} & = & 0\,, 
\end{array}\right\}\qquad j = 1, \ldots, s_k
\end{equation}
defined over $\mathcal{D}_{\ell}^{{\rm \, ex}}$, where
\[
\vv = \sum_i a^i \, C_i^{(\ell)} + \sum_{|\mu| = \ell} \sum_{\alpha} b_{\mu}^{\alpha} \, C_{\alpha}^{\mu} \quad \mathrm{and} \quad 
\JJ= \sum_{\mu} \sum_{\alpha} c_{\mu}^{\alpha} \partial_{u^{\alpha}_{\mu}}  + \sum_i d^i \partial_{x^i}
\]
\ENDFOR
\STATE \textbf{return}
the systems $S_{k,i}$ consisting of those equations $p = 0$ and inequations $q \neq 0$ in $S_{k,i}^{{\rm ex}}$ with $p \in \mathcal{D}_{\ell}$ and $q \in \mathcal{D}_{\ell}$
\end{algorithmic}
\end{algorithm}

The remainder of this section is dedicated to explaining this algorithm and
proving its correctness.

\begin{remark}\label{rem:algothomas}
  Algorithm~\ref{alg:simpleregularitydecomposition} is in principle not yet
  completely specified, as we say nothing about how the algebraic Thomas
  decomposition in Step~4 is computed.  In fact, the correctness of the
  algorithm depends on whether this Thomas decomposition has been
  computed in a ``good'' way (this is made precise in
  Proposition~\ref{prop:disjointprojection}).  As any reasonable
  implementation automatically satisfies this condition, we have not
  mentioned it in the algorithm.  It also should be noted that the output
  of our algorithm depends not only on details of the implementation of the
  Thomas decomposition, but strongly on the used ranking.  In a system with
  several independent variables $\xv$ or several differential unknowns
  $\uv$, very different results can be obtained for different orderings
  inside each of the blocks $\xv$ and $\uv$, respectively.  In particular,
  the obtained regularity decomposition is often overly complicated,
  i.\,e.\ consist of too many different components, as in Step~4 we implicitly
  compute a Thomas decomposition of the variety $\Sol{\bigl(\I_{\ell,k}(S)\bigr)}$
  which might entail many case distinctions that are not necessary for our purposes.
  In a post processing step these unnecessary case distinctions could be conflated
  by comparing for
  all cases with smooth points the linear part corresponding to the system
  $\vv(p_{k,j})=0$ and combining all cases where equivalent equations have
  been obtained.
\end{remark}

In Remark~\ref{rmrk:genalgideal} it was already mentioned that the ideal
$\I_\ell(S)$ can be generated by a simple algebraic system.  Now
\cite[Thm.~1.94]{lh:phd} entails that this ideal is equidimensional in the
strong sense that all its associated primes have the same
dimension.\footnote{Some authors call such ideals unmixed dimensional and
  speak of equidimensional ideals already when all minimal primes have the
  same dimensions.}  In particular, no embedded primes can exist.  Because
of the assumptions that $S$ is a simple differential system and that $\ell$
is sufficiently large, the prime decomposition of $\I_\ell(S)$ computed in
the second step of our algorithm induces also a prime decomposition of the
differential ideal $\I_{\mathrm{diff}}(S)$, as we show below.  Note,
however, that even if the prime decomposition of $\I_\ell(S)$ is minimal,
there is no guarantee that also the differential prime decomposition is
minimal.  Here, we encounter again the well-known Ritt problem \cite[\textsection IV.9]{kol:diffalg} in
differential algebra: no algorithm is known to decide whether one
differential prime ideal is contained in another one.

For the next proof, it is important to discuss the relationship between the
notion of a simple differential system as defined in
Definition~\ref{def:diffsimple} and the notion of a regular differential
system often used in differential algebra -- see e.\,g.\
\cite[Def.~4.7]{eh:tri2}.  The following lemma and its proof entail that we
may always assume without loss of generality that a simple differential
system is also regular, as the only difference between these two notions is
the extent to which autoreduction has been performed.
\cite[Def.~4.7]{eh:tri2} uses partial reductions, i.\,e.\ only reductions
using derived equations are performed but no purely algebraic reductions.
However, it is always assumed that the whole differential polynomial is
reduced.  By contrast, the conditions imposed in
Definitions~\ref{def:algsimple} and \ref{def:diffsimple} require only head
reductions, but algebraic reductions are also performed.
From a theoretical point of view, it is irrelevant whether or not tail reductions are
performed.  From a computational point of view, they are often expensive
and thus it is better to omit them.

\begin{lemma}\label{lem:regsys}
  Let $S$ be a simple differential system as in
  \eqref{eq:differentialsystem}.  Then $S$ is equivalent to a regular
  differential system in the sense that some tail (pseudo) reductions turn
  $S$ into a regular system with the same leaders and the same saturated
  multiplicatively closed set generated by the initials and separants.
\end{lemma}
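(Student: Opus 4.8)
\medskip\noindent\textbf{Proof plan.}\
The plan is to turn $S$ into the required regular differential system by tail pseudo-reducing its equations against one another, and then to verify the various assertions. Write $S=\{\,p_1=0,\ldots,p_s=0,\,q_1\neq0,\ldots,q_t\neq0\,\}$, with the equations indexed so that $p_j$ has a lower leader than $p_i$ whenever $j<i$, and recall that $\ld(p_j)\neq\ld(p_i)$ for $j\neq i$. Since $S$ is simple, the $p_i$ are head-reduced; what a regular differential system in the sense of \cite[Def.~4.7]{eh:tri2} demands in addition is that each $p_i$ be also (partially) reduced in its tail. I would therefore process the equations in the order $i=1,\ldots,s$ and replace at step~$i$ the polynomial $p_i$ by the polynomial $\tilde p_i$ obtained from it by pseudo-reducing its tail modulo the already-processed $\tilde p_1,\ldots,\tilde p_{i-1}$ and their derivatives, using only proper derivatives of the leaders $\ld(\tilde p_j)$ (i.\,e.\ partial reductions). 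Every term of the tail has rank strictly below $\ld(p_i)$, so this leaves the leading term of $p_i$ in its leader untouched, and there are a product $h_i$ of initials and separants of $\tilde p_1,\ldots,\tilde p_{i-1}$ and polynomials $g_{ik}$, all of rank strictly below $\ld(p_i)$, with
\[
  h_i\,p_i \;=\; \tilde p_i + \sum_k g_{ik}\,\delta^{\mu_k}\tilde p_{j_k},\qquad j_k<i .
\]

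First I would read off the behaviour of leaders, initials and separants from this identity. As the subtracted terms all have leaders strictly below $\ld(p_i)$, one gets $\ld(\tilde p_i)=\ld(p_i)$ and $\init(\tilde p_i)=h_i\,\init(p_i)$; and since $\ld(p_i)$ occurs in neither $h_i$, the $g_{ik}$, nor the $\delta^{\mu_k}\tilde p_{j_k}$, differentiating the identity with respect to $\ld(p_i)$ gives $\spt(\tilde p_i)=h_i\,\spt(p_i)$. Thus each new initial and separant equals the corresponding old one times a product of \emph{earlier} initials and separants; an easy induction on $i$ then shows that the initials and separants of the new system $\tilde S$ generate the same saturated multiplicatively closed set $\Sigma$ as those of $S$. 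Together with the observation that the leaders were never changed, this already establishes the statements about the leaders and about the saturated multiplicatively closed set.

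Next I would check that $S$ and $\tilde S$ are equivalent, where as inequation set for $\tilde S$ I take the $q_j$ together with the non-vanishing conditions on the $\init(\tilde p_i)$ and $\spt(\tilde p_i)$. Read in both directions, the displayed identity shows inductively that $\tilde p_i\in\langle p_1,\ldots,p_i\rangle_\Delta$ and that $h_i\,p_i\in\langle\tilde p_1,\ldots,\tilde p_i\rangle_\Delta$; since the $h_i$ lie in $\Sigma$, a routine chase of saturations based on \eqref{eq:dnss} and \eqref{eq:dnsssimp} yields $\I_{\mathrm{diff}}(\tilde S)=\I_{\mathrm{diff}}(S)$. Combined with the fact that the new inequation set cuts out inside $\{Q\neq0\}$ ($Q$ as in \eqref{eq:dnsssimp}) the same open set as the one defining $\Sol(S)$---a simple system being solved only where its initials and separants do not vanish---this gives $\Sol(\tilde S)=\Sol(S)$. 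It remains to see that $\tilde S$ is really regular: its equations form an autoreduced triangular set contained in the differential ideal $\I_{\mathrm{diff}}(S)=\I_{\mathrm{diff}}(\tilde S)$, which is characterisable since the equations of a simple system form a regular chain (\cite[Cor.~1.96]{lh:phd}); by construction they share the leaders of its characteristic set $\{p_1,\ldots,p_s\}$, so $\{\tilde p_1,\ldots,\tilde p_s\}$ is itself a characteristic set of $\I_{\mathrm{diff}}(S)$, hence partially reduced and coherent, and with the inequations just described it is a regular differential system in the sense of \cite[Def.~4.7]{eh:tri2}.

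The step I expect to be the main obstacle is not the reduction itself---that is the classical autoreduction of a triangular set---but the bookkeeping in the second and third paragraphs: verifying that tail pseudo-reduction alters the initials and separants only by multiplication with products of \emph{strictly lower} initials and separants (so that $\Sigma$, and with it $\I_{\mathrm{diff}}(S)$ and $\Sol(S)$, is genuinely preserved) and that coherence/passivity descends to the autoreduced chain. Both ultimately rest on the characterisability of $\I_{\mathrm{diff}}(S)$ guaranteed by the simplicity of $S$.
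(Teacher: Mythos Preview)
Your approach is essentially the same as the paper's: perform tail (pseudo) reductions to turn the equations into a partially autoreduced triangular set, and track how initials and separants change. Your bookkeeping in the first two paragraphs is more explicit than the paper's (which simply asserts that tail reduction ``might change their initials and separants, by multiplying them by the initial or separants of the reducing polynomial''), and is correct.

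Two small points where the paper is tighter. First, for coherence you take a detour through characterisability and then claim ``$\{\tilde p_1,\ldots,\tilde p_s\}$ is itself a characteristic set \ldots\ hence \ldots\ coherent''. Being a characteristic set does not by itself yield coherence; the paper instead invokes property~(ii) of Definition~\ref{def:diffsimple} (passivity for the Janet division) directly, which immediately gives that all $\Delta$-polynomials reduce to zero modulo the chain---this survives tail reduction since the ideal and the set of leaders are unchanged. You could simply replace your final coherence argument by this observation. Second, you do not address that the inequations $q_j$ must themselves be partially reduced with respect to the new chain; the paper handles this via properties~(i) and~(iii) of Definition~\ref{def:diffsimple} (no leader of an inequation is a derivative of a leader of an equation), which again survives tail reduction of the $q_j$. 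With these two adjustments your proof is complete and matches the paper's.
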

  
\begin{proof}
  From $S$ we collect the left hand sides of the equations and inequations,
  respectively, in the two sets $P$ and $Q$.  The first two
  properties in Definition~\ref{def:diffsimple} entail that modulo tail
  reduction $P$ is a differential triangular set.  The second property also
  ensures that all $\Delta$-polynomials that can be formed with elements of
  $P$ reduce to zero modulo $P$.  The first and third property imply that
  modulo tail reduction each inequation is partially reduced with respect
  to $P$.  Finally, denote by $Q^{\infty}$ the smallest subset of
  $\mathcal{D}$ that contains $1$ and $Q$ and has the property that
  $q,\tilde{q}\in Q^{\infty}$ is equivalent to $q\tilde{q}\in Q^{\infty}$,
  i.\,e.\ the saturated multiplicatively closed set generated by $Q$.  Note
  that tail (pseudo) reduction amongst elements of $P$ might change their
  initials and separants, by multiplying them by the initial or separants
  of the reducing polynomial.  Finally, any separant of an equation in $P$
  lies in $Q^{\infty}$, up to reduction by $P$.  Thus all conditions in the
  definition \cite[Def.~4.7]{eh:tri2} of a regular differential system are
  satisfied.
\end{proof}

\begin{proposition}\label{prop:diffprimedecomp}
  Assume that $S$ does not contain any equation or inequation of order
  greater than $\ell$ and denote by $q$ the product of all separants of the
  equations in $S$.  Then the differential ideals
  $\langle p_{k,1},\dots, p_{k,s_k}\rangle_\Delta:q^\infty$ (in the
  notation of Algorithm~\ref{alg:simpleregularitydecomposition}) for
  $1\leq k\leq t$ represent all essential prime components of the
  differential ideal~$\I_{\mathrm{diff}}(S)$.
\end{proposition}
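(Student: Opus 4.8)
The plan is to show that the prime decomposition $\I_{\ell}(S)=\bigcap_{k=1}^t\I_{\ell,k}$ of the algebraic truncation "lifts" to a differential prime decomposition of $\I_{\mathrm{diff}}(S)$, where $\I_{\ell,k}=\langle p_{k,1},\dots,p_{k,s_k}\rangle$. The natural candidate for the $k$-th differential prime component is $\P_k:=\langle p_{k,1},\dots,p_{k,s_k}\rangle_\Delta:q^\infty$, and I must prove three things: (a) each $\P_k$ is a differential prime ideal; (b) $\I_{\mathrm{diff}}(S)=\bigcap_{k=1}^t\P_k$; and (c) this intersection is irredundant, so that each $\P_k$ is an essential prime component. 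Throughout I will use freely that $S$ is simple with respect to a Riquier ranking, that $\ell$ is sufficiently large (Remark~\ref{rem:l}), that $\I_{\mathrm{diff}}(S)=\hat{\I}_{\mathrm{diff}}(S):Q^\infty$ is radical with $\I_\ell(S)=\I_{\mathrm{diff}}(S)\cap\D_\ell$ radical and equidimensional (Remark~\ref{rem:sat}), and that by Lemma~\ref{lem:regsys} the equations in $S$ form (up to tail reduction) a regular/characteristic differential system, so that $\I_{\mathrm{diff}}(S)=\langle P\rangle_\Delta:q^\infty$ with $P$ the characteristic set and $q$ the product of separants (this is \eqref{eq:dnsssimp}, since for a Riquier-simple system the initials are constants up to the saturation by separants, or can be absorbed). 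The key structural input is that a characterisable differential ideal behaves, order by order, like the radical algebraic ideal obtained by prolonging the characteristic set and saturating by separants (this is exactly \cite[Lemma~1.93]{lh:phd} quoted in Remark~\ref{rmrk:genalgideal}), so membership in $\I_{\mathrm{diff}}(S)$ is governed by reduction modulo $P$, and the same must hold for each putative component $\P_k$.

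For step (a): since each $\I_{\ell,k}$ is prime and contains the generators $p_{k,j}$, and since the $p_{k,j}$ inherit from $S$ the triangular structure with respect to the chosen orderly ranking on the $\uv$-block (condition (ii) in Algorithm~\ref{alg:simpleregularitydecomposition}), the set $\{p_{k,1},\dots,p_{k,s_k}\}$ is again a triangular (indeed autoreduced, characterisable) set whose saturation by the separants $q$ is prime; one checks that $P_k:=\{p_{k,j}\}$ is coherent/passive because $P$ was, so $\P_k=\langle P_k\rangle_\Delta:q^\infty$ is a characterisable differential ideal, and characterisability together with the fact that $\I_{\ell,k}$ is a prime algebraic truncation forces $\P_k$ to be prime (a differential polynomial lies in $\P_k$ iff it reduces to $0$ modulo $P_k$, and this reduction is compatible with the prime algebraic ideal at each order). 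For step (b): the inclusion $\I_{\mathrm{diff}}(S)\subseteq\bigcap_k\P_k$ follows because $p_{k,j}\in\I_\ell(S)\subseteq\I_{\mathrm{diff}}(S)$ and $\I_{\mathrm{diff}}(S)$ is saturated by $q$ (every separant of an equation lies in $\I_{\mathrm{diff}}(S)^c$, i.e. $q$ is a nonzerodivisor mod $\I_{\mathrm{diff}}(S)$). Conversely, suppose $f\in\bigcap_k\P_k$. Reducing $f$ modulo the characteristic set $P$ (whose leaders are the same as those of the combined $P_k$, by the equidimensionality and the way the $p_{k,j}$ are chosen) yields a remainder $r$ with $q^Nr\in\langle P\rangle_\Delta$ for the "joined" system; but $r$ then lies in $\bigcap_k\I_{\ell',k}$ for every order $\ell'$ to which we prolong, hence $r\in\bigcap_k\I_{\ell',k}=\I_{\ell'}(S)$, whence $r\in\I_{\mathrm{diff}}(S)$ and therefore $f\in\I_{\mathrm{diff}}(S)$. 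Finally (c): irredundancy is inherited from the minimality of the algebraic prime decomposition of $\I_\ell(S)$ — if some $\P_{k_0}\supseteq\bigcap_{k\neq k_0}\P_k$, intersecting with $\D_\ell$ would give $\I_{\ell,k_0}\supseteq\bigcap_{k\neq k_0}\I_{\ell,k}$, contradicting minimality (using that $\P_k\cap\D_\ell=\I_{\ell,k}$, which is itself part of what step (b)'s argument establishes).

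The main obstacle I expect is making precise and rigorous the assertion that "$\P_k\cap\D_\ell=\I_{\ell,k}$" and, more generally, that prolonging the characteristic set of $\P_k$ and truncating in order $\ell'\geq\ell$ recovers exactly $\I_{\ell',k}$ — i.e. that the prime decomposition is stable under prolongation and that no new components or mergers appear at higher order. This is where the hypotheses "$S$ simple with respect to a Riquier ranking" and "$\ell$ sufficiently large" do the real work: one needs that for $\ell$ large enough the characteristic set of $S$ already contains equations of all the leaders that ever occur (the system is "complete"), so that the prime decomposition of $\I_\ell(S)$ has stabilised and prolongation acts on each component $\I_{\ell,k}$ separately and compatibly. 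I would discharge this by invoking the equidimensionality and characterisability results (Remarks~\ref{rem:sat} and the following one, \cite[Thm.~1.94, Cor.~1.96]{lh:phd}) together with the Lemma preceding Remark~\ref{rem:sat} ($\pi^{k+\ell}_\ell(\Jc{\ell+k}(S))=\Jc{\ell}(S)$, i.e. formal integrability), applied componentwise. The remaining bookkeeping — checking coherence of each $P_k$, and that reduction modulo $P$ distributes over the components — is routine once this stabilisation is in place.
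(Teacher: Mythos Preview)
Your proposal attempts to prove from scratch what the paper obtains in two lines by citation. The paper's proof is: by Lemma~\ref{lem:regsys} one may assume $S$ is a regular differential system in the sense of \cite[Def.~4.7]{eh:tri2}, and then \cite[Thm.~4.13]{eh:tri2} directly yields the claim. That theorem of Hubert is precisely the statement that for a regular differential system the essential differential prime components are obtained by lifting the algebraic prime decomposition at order~$\ell$ and saturating by the separants, and moreover that characteristic sets of the components share the leaders of~$S$.

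Your direct argument has a genuine gap at the point you yourself flag. In step~(a) you assert that the generators $p_{k,1},\dots,p_{k,s_k}$ of the algebraic prime $\I_{\ell,k}$ ``inherit from $S$ the triangular structure'' and form a characterisable set that is ``coherent/passive because $P$ was''. Neither of these follows: the $p_{k,j}$ are merely \emph{some} generating set of a prime ideal produced by a generic prime decomposition routine, with no a~priori control on their leaders, and factors or components of passive systems are not automatically passive. The fact that one \emph{can} choose characteristic sets for the components with the same leaders as $S$ is exactly the nontrivial content of \cite[Thm.~4.13]{eh:tri2} (the paper later relies on this explicitly in the proof of Theorem~\ref{thm:regde}). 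Without it, you cannot conclude that $\P_k$ is characterisable, nor that $\P_k\cap\D_\ell=\I_{\ell,k}$, so both (a) and (c) collapse. Your closing paragraph gestures at discharging this via equidimensionality and formal integrability of the full system, but those statements concern $\I_{\mathrm{diff}}(S)$ as a whole and do not by themselves control the individual components.
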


\begin{proof}
  By Lemma~\ref{lem:regsys}, we may assume that $S$ is a regular
  differential system.  The statement follows then immediately from
  \cite[Thm.~4.13]{eh:tri2}.
\end{proof}

\begin{remark}
  When setting up the linear equations describing the Vessiot spaces in
  Step~4, it suffices to consider only those generators $p_{k,j}$ that
  depend on some jet variables of order $\ell$, as all other generators
  would only contribute the trivial equation $0=0$. Indeed, if $p$ is a
  generator of lower order, then we have trivially $C_\alpha^\mu(p)=0$ and
  it follows from \eqref{eq:fderiv} that $C_i^{(\ell)}(p)=D_ip$.  Since by
  Proposition~\ref{prop:diffprimedecomp} the ideal $\I_{\ell,k}(S)$ is the
  truncation of a differential ideal, the formal derivative~$D_ip$ (defined
  in Appendix~\ref{sec:geojet}) can be written as a linear combination of
  the generators~$p_{k,j}$. Hence $\mathbf{V}(p)$ vanishes modulo
  $\I_{\ell,k}(S)$, i.\,e.\ it is zero at all considered points.
\end{remark}

As preparation for showing the correctness of
Algorithm~\ref{alg:simpleregularitydecomposition}, we prove some results
about the simple algebraic systems $S_{k,i}^{{\rm ex}}$ produced by the
algorithm relating them to algebraic singularities and the Vessiot and
symbol spaces. Furthermore, we provide a technical proposition needed for
the correctness proof.

\begin{proposition}\label{prop:algSing}
  Given any subset
  $\Sol{(S_{k,i})}\subseteq V_{\ell,k}=\Sol{(\I_{\ell,k})}$, either all
  points contained in it are smooth in $V_{\ell,k}$ or all are algebraic
  singularities of $V_{\ell,k}$.
\end{proposition}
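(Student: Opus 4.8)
The plan is to reduce the smoothness dichotomy on $V_{\ell,k}$ to a genericity statement about the rank of the Jacobian matrix along the Thomas component $\Sol{(S_{k,i})}$, exploiting the particular shape of the combined linear system in \eqref{eq:determVessiot} and the properties of the algebraic Thomas decomposition. Recall that a point $\rho\in V_{\ell,k}$ is smooth if and only if the Jacobian matrix of $p_{k,1},\dots,p_{k,s_k}$ at $\rho$ has rank equal to $\mathrm{codim}\, V_{\ell,k}$ (here we use that $V_{\ell,k}$ is irreducible, so the codimension is constant, and that the rank cannot exceed the codimension at any point of the variety). Equivalently, $\rho$ is an algebraic singularity if and only if this rank is strictly smaller, i.\,e.\ if and only if the kernel of the Jacobian—which is exactly the solution space of the linear system $\JJ(p_{k,j})=0$ in the auxiliary indeterminates $\cv$, $\dv$—has dimension strictly greater than $(n+\text{(number of jet coordinates of order }\ell))-\mathrm{codim}\, V_{\ell,k}$.

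First I would recall that in the combined system \eqref{eq:determVessiot} the block of equations $\JJ(p_{k,j})=0$ involves only the indeterminates $\cv$, $\dv$ (and the base coordinates $\uv$, $\xv$ through the coefficients), and is decoupled from the $\vv$-block. Hence on a fixed simple algebraic component $S_{k,i}^{{\rm ex}}$ the number of equations with leader among $\cv$, $\dv$ is constant along $\Sol{(S_{k,i}^{{\rm ex}})}$: this is precisely the feature of a simple (Thomas) system that the set of leaders of its equations is fixed. By the chosen ordering ($\dv>\cv>\bv>\av>\uv>\xv$, with $\cv$ ordered like the corresponding jet variables), the count of such leaders is exactly the rank of the coefficient matrix of the $\JJ$-block, which is the rank of the Jacobian of $p_{k,1},\dots,p_{k,s_k}$. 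Thus this rank is \emph{constant} on $\Sol{(S_{k,i}^{{\rm ex}})}$, and therefore constant on its projection $\Sol{(S_{k,i})}\subseteq V_{\ell,k}$ (since the removal of the $\av,\bv,\cv,\dv$-equations is exactly this projection, and every point of $\Sol{(S_{k,i})}$ lifts to at least one point of $\Sol{(S_{k,i}^{{\rm ex}})}$ by the triangularity of the simple system in the eliminated variables).

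Next I would combine the two halves. Along $\Sol{(S_{k,i})}$ the Jacobian rank is a fixed integer $r_{k,i}$. Either $r_{k,i}=\mathrm{codim}\, V_{\ell,k}$, in which case every point of $\Sol{(S_{k,i})}$ is smooth in $V_{\ell,k}$; or $r_{k,i}<\mathrm{codim}\, V_{\ell,k}$, in which case every point of $\Sol{(S_{k,i})}$ is an algebraic singularity. This is exactly the asserted dichotomy. One should also note that the inequalities $p_{k,1},\dots,p_{k,s_k}$ were added to the combined system precisely so that $\Sol{(S_{k,i}^{{\rm ex}})}$ sits over $V_{\ell,k}$; without them a spurious drop of rank off the variety could occur, but here it cannot. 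The main obstacle is making precise the claim that the number of $\cv,\dv$-leaders of the simple algebraic system $S_{k,i}^{{\rm ex}}$ genuinely equals the generic Jacobian rank along the component: this relies on the fact that, for a simple (Thomas) algebraic system, the solution set of a linear-in-$\cv,\dv$ subsystem has fixed dimension over the component—i.\,e.\ the linear system \eqref{eq:jacobian} does not drop rank on $\Sol{(S_{k,i})}$ beyond what the leaders record—which is a standard but slightly delicate consequence of the simplicity conditions (no leader of an equation is a multiple of another, initials nonvanishing) applied to the $\JJ$-block. Everything else is the elementary characterisation of smooth points via the Jacobian criterion on an irreducible variety over an algebraically closed field.
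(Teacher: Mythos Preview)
Your argument is correct and follows essentially the same route as the paper's own proof: both reduce the question to the Jacobian criterion, observe that the $\JJ$-block of \eqref{eq:determVessiot} is homogeneous linear in $\cv,\dv$ and decoupled from the $\vv$-block, and then use triangularity together with the non-vanishing of initials in the simple system $S_{k,i}^{\rm ex}$ to conclude that the number of equations with leader among $\cv,\dv$ equals the (constant) rank of the Jacobian along $\Sol(S_{k,i})$. Your hedging at the end (``the main obstacle is making precise\dots'') is unnecessary---the paper dispatches this step directly by noting that linearity plus triangularity means the $\cv,\dv$-equations are already in row echelon form with pivots that do not vanish on $\Sol(S_{k,i})$, so the rank is manifestly the number of such equations; you should state this with the same confidence.
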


\begin{proof}
  By substituting the coordinates of a point $\rho \in V_{\ell,k}$ into
  \eqref{eq:jacobian}, we obtain a system of linear equations in $\cv$,
  $\dv$ whose solution space is the tangent space to $V_{\ell,k}$ at
  $\rho$. The point $\rho$ is smooth in $V_{\ell,k}$, if and only if this
  tangent space has dimension $\dim{V_{\ell,k}}$, and singular otherwise.
  The algebraic system (\ref{eq:determVessiot}) consists only of equations,
  and the equations which involve the indeterminates $\cv$, $\dv$ are
  homogeneous of degree one in these indeterminates. Since $\cv$, $\dv$ are
  ranked higher than the indeterminates $\uv$, $\xv$, we conclude that the
  simple algebraic system $S_{k,i}^{{\rm ex}}$ obtained in Step~4 of
  Algorithm~\ref{alg:simpleregularitydecomposition} contains no inequations
  with a leader among $\cv$, $\dv$ and every equation which involves the
  indeterminates $\cv$, $\dv$ is homogeneous of degree one in these
  indeterminates.  Consider now those equations with leader among $\cv$,
  $\dv$ in $S_{k,i}^{{\rm ex}}$. Due to the linearity and the triangularity
  of the system, the number of these equations is equal to the codimension
  of the tangent space and this codimension is independent of the choice of
  $\rho \in \Sol(S_{k,i})$, because $S_{k,i}^{{\rm ex}}$ is simple. Hence,
  $\Sol (S_{k,i})$ consists entirely of smooth points, if and only if the
  number of equations with leader among $\cv$, $\dv$ is equal to the
  codimension of $V_{\ell,k}$, and entirely of singular points otherwise.
\end{proof}

\begin{remark}\label{rem:algSing}
  No equation in the system \eqref{eq:determVessiot} contains
  simultaneously indeterminates from $\av$, $\bv$ and from $\cv$, $\dv$.
  Since the algebraic Thomas decomposition method does not apply polynomial
  division to a pair of equations involving different sets of
  indeterminates from $\av$, $\bv$ and $\cv$, $\dv$, respectively, the
  correctness does not depend on the choice of how $\av$, $\bv$, $\cv$,
  $\dv$ are ordered.
\end{remark}

\begin{proposition}\label{prop:dim}
  Let $S_{k,i}^{{\rm ex}}$ be a simple algebraic system obtained in Step~4
  of Algorithm~\ref{alg:simpleregularitydecomposition} such that
  $\Sol{(S_{k,i})}$ consists entirely of smooth points. Denote by $N_{\av}$
  and $N_{\bv}$ the number of equations with a leader among the variables
  $\av$ and $\bv$, respectively. Then at any point
  $\rho\in\Sol{(S_{k,i})}\subseteq\Sol{(\I_{\ell,k})}$ the dimension of the
  symbol space and of the Vessiot space, resp., is given by
  \begin{align}
    \dim{\V_{\rho}\bigl[\Sol{(\I_{\ell,k}(S))}\bigr]} &=
    m \binom{\ell + n - 1}{\ell} + n - N_{\bv} - N_{\av}\,,\\
    \dim{\N_{\rho}\bigl[\Sol{(\I_{\ell,k}(S))}\bigr]} &=
    m \binom{\ell + n - 1}{\ell}-N_{\bv}\,.
  \end{align}
  Furthermore, an $n$-dimensional complement $\H_{\rho}$ to the symbol
  space $\N_{\rho}\bigl[\Sol{(\I_{\ell,k}(S))}\bigr]$ exists in the Vessiot
  space $\V_{\rho}\bigl[\Sol{(\I_{\ell,k}(S))}\bigr]$, if and only if
  $N_{\av}=0$.  Finally, the set $\Sol{(S_{k,i})}$ is a regular algebraic
  jet set.
\end{proposition}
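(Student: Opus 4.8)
The plan is to translate every assertion into a rank statement about the linear system \eqref{eq:vessdist} and then to read those ranks off the shape of the simple algebraic system $S_{k,i}^{{\rm ex}}$. First I would fix the linear-algebra dictionary. The ansatz \eqref{eq:ansvess} identifies the space of coefficient vectors $(\av,\bv)$ injectively with a subspace of the contact distribution at $\rho$, and this space has dimension $n+m\binom{\ell+n-1}{\ell}$ (there are $n$ vectors $C_i^{(\ell)}$ and $m\binom{\ell+n-1}{\ell}$ admissible pairs $(\mu,\alpha)$ with $|\mu|=\ell$). As in the proof of Proposition~\ref{prop:vessdist}, at a smooth point $\rho$ of $V_{\ell,k}=\Sol(\I_{\ell,k})$ the Vessiot space corresponds under this identification to $\ker\bigl(D(\rho)\mid M_{\ell}(\rho)\bigr)$ and coincides there with the Vessiot cone (cf.\ the discussion after Definition~\ref{def:vessc}), whereas the symbol space corresponds to the subspace cut out by $\av=0$, i.\,e.\ to $\ker M_{\ell}(\rho)$ inside the $\bv$-space. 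Consequently
\begin{align*}
  \dim\V_{\rho}\bigl[\Sol(\I_{\ell,k}(S))\bigr] &= n+m\binom{\ell+n-1}{\ell}-\rank\bigl(D(\rho)\mid M_{\ell}(\rho)\bigr)\,,\\
  \dim\N_{\rho}\bigl[\Sol(\I_{\ell,k}(S))\bigr] &= m\binom{\ell+n-1}{\ell}-\rank M_{\ell}(\rho)\,.
\end{align*}

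The second step is to show that $N_{\bv}=\rank M_{\ell}(\rho)$ and $N_{\av}=\rank\bigl(D(\rho)\mid M_{\ell}(\rho)\bigr)-\rank M_{\ell}(\rho)$ for every $\rho\in\Sol(S_{k,i})$. By Remark~\ref{rem:algSing} no equation of \eqref{eq:determVessiot} involves indeterminates from both $\{\av,\bv\}$ and $\{\cv,\dv\}$, so the algebraic Thomas decomposition processes the $\av,\bv$-block on its own; there the equations $\vv(p_{k,j})=0$ are homogeneous of degree one in $\av,\bv$ with coefficients in $\D_{\ell}$ (precisely the entries of $D$ and $M_{\ell}$), so the decomposition performs nothing but a parametric Gaussian elimination of $\bigl(D\mid M_{\ell}\bigr)$ over the strata of $(\uv,\xv)$-space, each branch recording the non-vanishing of the chosen pivots in its inequations. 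Since $\bv>\av$, an equation with a leader among $\av$ has vanishing $\bv$-part; hence on the branch $S_{k,i}^{{\rm ex}}$ the $\bv$-parts of the equations with a $\bv$-leader form a basis of the row space of $M_{\ell}(\rho)$, so $N_{\bv}=\rank M_{\ell}(\rho)$, and as $N_{\av}+N_{\bv}=\rank\bigl(D\mid M_{\ell}\bigr)(\rho)$ also $N_{\av}=\rank\bigl(D\mid M_{\ell}\bigr)(\rho)-\rank M_{\ell}(\rho)$. Simplicity of $S_{k,i}^{{\rm ex}}$ forces these ranks, and hence $N_{\av}$ and $N_{\bv}$, to be constant on $\Sol(S_{k,i})$, and substituting them into the two displayed identities gives the dimension formulas of the proposition. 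For the complement, note that $\N_{\rho}$ is exactly the subspace $\av=0$ of $\V_{\rho}$, so any linear complement $\H_{\rho}$ of $\N_{\rho}$ in $\V_{\rho}$ has dimension $\dim\V_{\rho}-\dim\N_{\rho}=n-N_{\av}$ and is automatically $\pi^{\ell}$-transversal, because $\ker(T_{\rho}\pi^{\ell})\cap\V_{\rho}=\N_{\rho}$; such an $\H_{\rho}$ is $n$-dimensional if and only if $N_{\av}=0$.

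Finally, to see that $\Sol(S_{k,i})$ is a regular algebraic jet set I would verify the three conditions of Definition~\ref{def:regjetset}. For (i): deleting from $S_{k,i}^{{\rm ex}}$ every equation and inequation whose leader lies among $\av,\bv,\cv,\dv$ returns a simple algebraic system $S_{k,i}$ over $\D_{\ell}$ (projection property of the algebraic Thomas decomposition), and the solution set of a simple algebraic system is a smooth submanifold of $\je{\ell}$, since at each of its points the Jacobian of its equations with respect to their leaders is triangular with the non-vanishing separants on the diagonal, so the implicit function theorem applies; by the hypothesis of the proposition these points are moreover smooth points of $V_{\ell,k}$. For (ii) and (iii): by the first two steps $\rho\mapsto\rank M_{\ell}(\rho)$ and $\rho\mapsto\rank\bigl(D\mid M_{\ell}\bigr)(\rho)$ are constant on $\Sol(S_{k,i})$, while the pivots of the relevant triangular linear systems do not vanish on $\Sol(S_{k,i})$; hence, by Cramer's rule, their solution spaces depend polynomially, thus smoothly, on $\rho$, so the Vessiot distribution and the symbol restrict to smooth vector bundles over $\Sol(S_{k,i})$ --- the Vessiot distribution being a genuine vector bundle because at smooth points the Vessiot cone is linear.

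I expect the main obstacle to be the bookkeeping of the second step: making rigorous that, under the three ordering conditions imposed before Algorithm~\ref{alg:simpleregularitydecomposition} together with the block structure noted in Remark~\ref{rem:algSing}, the algebraic Thomas decomposition restricted to the $\av,\bv$-block really reduces to a clean parametric Gaussian elimination producing exactly $\rank M_{\ell}$ equations with $\bv$-leaders and $\rank\bigl(D\mid M_{\ell}\bigr)-\rank M_{\ell}$ with $\av$-leaders, with these counts stable along the whole component $\Sol(S_{k,i})$. The remaining steps are routine manipulations or direct appeals to properties of simple systems established earlier and in the cited references.
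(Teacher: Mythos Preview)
Your proposal is correct and follows essentially the same line as the paper's proof: both exploit that the $\av,\bv$-equations in $S_{k,i}^{\rm ex}$ are homogeneous linear with no accompanying inequations, so that the triangular simple system amounts to a row echelon form of $\bigl(D(\rho)\mid M_{\ell}(\rho)\bigr)$ whose pivot pattern is stable across $\Sol(S_{k,i})$; the ordering $\bv>\av$ then separates $N_{\bv}=\rank M_{\ell}(\rho)$ from $N_{\av}+N_{\bv}=\rank\bigl(D\mid M_{\ell}\bigr)(\rho)$, and the dimension formulas and the regularity of $\Sol(S_{k,i})$ follow. Your write-up is slightly more explicit than the paper's (you spell out the rank dictionary and the smoothness of $\Sol(S_{k,i})$ via the triangular Jacobian), but the argument is the same.
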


\begin{proof}
  The algebraic system (\ref{eq:determVessiot}) consists only of equations
  and the equations which involve the indeterminates $\av$ and $\bv$ are
  homogeneous of degree one in these indeterminates and independent of the
  indeterminates $\cv$ and $\dv$.  Thus, for notational simplicity, we may
  ignore in the sequel the equations containing $\cv$ and $\dv$.  Since
  $\av$ and $\bv$ are ranked higher than the indeterminates $\uv$, $\xv$,
  the simple algebraic system $S_{k,i}^{{\rm ex}}$ cannot contain
  inequations with a leader in $\av$ or $\bv$ and every equation which
  involves the indeterminates $\av$ and $\bv$ is still homogeneous of
  degree one in these indeterminates. The triangularity of
  $S_{k,i}^{{\rm ex}}$ means that these equations correspond to a reduced
  row echelon form of the determining equations of the Vessiot
  distribution. This row echelon form is preserved for any choice of the
  point $\rho \in {\rm Sol}(S_{k,i})$.  Since the dimensions of the vectors
  $\bv$ and $\av$ are $m \binom{\ell + n - 1}{\ell}$ and $n$, respectively,
  the claimed expression for the dimension of the Vessiot spaces follows
  immediately from the linearity of the equations.

  We consider next the symbol spaces. Let $\rho=(\uv, \xv)$ be a point on
  $\Sol{(S_{k,i})}$. The symbol space
  $\N_{\rho}\bigl[\Sol{(\I_{\ell,k}(S))}\bigr]$ consists of all solutions
  of $S_{k,i}^{{\rm ex}}$ of the form $(\bv, \vec{0}, \uv, \xv)$.  We rank
  $\bv$ higher than $\av$.  Hence any equation with a leader in $\av$ is
  independent of the indeterminates $\bv$ and can be ignored when the
  symbol is computed, as it is automatically satisfied by
  homogeneity.  This observation entails the claimed expression for the
  dimension of the symbol space.

  The dimension of any complement $\H_\rho$ is trivially the difference of
  the dimensions of the Vessiot and the symbol space.  Hence, by the just
  derived expressions for these dimensions, it is given by $n-N_{\av}$
  which proves the last assertion.  Finally, we note that by the above
  mentioned independence of the pivots in the row echelon form of the
  chosen point $\rho \in \Sol{(S_{k,i})}$, the dimensions of the Vessiot
  and the symbol spaces are constant over $\Sol{(S_{k,i})}$.  Hence this
  set is a regular algebraic jet set.
\end{proof}

One key point in proving the correctness of
Algorithm~\ref{alg:simpleregularitydecomposition} concerns the last step
when we move from the systems $S_{k,j}^{{\rm ex}}$ including the
indeterminates $\av$, $\bv$, $\cv$, $\dv$ to the projected systems
$S_{k,j}$.  The next proposition asserts that the disjointness is preserved
by this operation.  We consider the following generalisation of our
set-up. Let $R = \mathbb{C}[y_1, \ldots, y_m][z_1, \ldots, z_n]$ be a
polynomial ring equipped with the ranking
$z_1 > z_2 > \ldots > z_n > y_1 > y_2 > \ldots > y_m$. Let $S$ be a (not
necessarily simple) algebraic system over $R$ which does not contain any
inequation with a leader in $\{ z_1, \ldots, z_n \}$ and whose equations
with a leader in $\{ z_1, \ldots, z_n \}$ are homogeneous of degree one as
polynomials in $z_1$, \ldots, $z_n$. Applying any judicious algorithm
computing a Thomas decomposition -- e.\,g.\ the one from
\cite{bglr:thomasalg} -- to $S$ computes an output of this form (for the
necessity of this form see Remark~\ref{rem:compr} below), since both the
initials and the discriminants of the homogeneous polynomials of degree one
are polynomials in the variables $\mathbf{y}$ and hence no case distinction
with respect to any polynomial in the variables $\mathbf{z}$ is
necessary. Moreover, let $S_1$, \ldots, $S_r$ be an algebraic Thomas
decomposition of $S$ with respect to a ranking $>$ such that no $S_i$
contains an inequation with a leader in $\{ z_1, \ldots, z_n \}$. Our
situation is recovered by identifying the variables $\mathbf{z}$ with the
parameters $\av$, $\bv$, $\cv$, $\dv$ and the variables $\mathbf{y}$ with
the appearing jet variables. In the sequel, we denote by $(S_i)_{< z_j}$
the algebraic subsystem consisting of all equations and inequations in the
simple system $S_i$ with a leader less than $z_j$. Thus $(S_i)_{< z_n}$
corresponds to the projected system without any of the variables
$\mathbf{z}$.

\begin{proposition}\label{prop:disjointprojection}
  The solution sets ${\rm Sol}((S_1)_{< z_n})$, ${\rm Sol}((S_2)_{< z_n})$,
  \ldots, ${\rm Sol}((S_r)_{< z_n})$ of the projected systems are also
  pairwise disjoint.
\end{proposition}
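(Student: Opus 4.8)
The plan is to deduce the claim directly from the disjointness that already holds \emph{before} the projection, i.e.\ from the fact that $\Sol(S_1),\dots,\Sol(S_r)$ are pairwise disjoint because $S_1,\dots,S_r$ is a Thomas decomposition. The single observation that makes this work is that, thanks to the homogeneity hypothesis, the point with all $\mathbf{z}$-coordinates equal to zero always lies in the fibre of $\Sol(S_i)$ over any point of the projected solution set $\Sol((S_i)_{<z_n})$.

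First I would classify the equations and inequations of a simple system $S_i$ by the position of their leaders. By triangularity, a constraint whose leader lies among $y_1,\dots,y_m$ involves only variables ranked not higher than that leader, hence no variable $z_j$ at all; conversely, any non-trivial constraint mentioning some $z_j$ has its leader among $z_1,\dots,z_n$. Thus $(S_i)_{<z_n}$ is exactly the collection of those equations and inequations of $S_i$ not involving $\mathbf{z}$, every $\mathbf{z}$-involving constraint of $S_i$ is an equation with leader among $z_1,\dots,z_n$, and -- by the hypothesis, together with its preservation under a judicious Thomas algorithm as recalled just before the proposition -- each such equation is homogeneous of degree one in $z_1,\dots,z_n$ while $S_i$ contains no inequation involving $\mathbf{z}$. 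Consequently, for $\bar{\mathbf{y}}\in\Sol((S_i)_{<z_n})\subseteq\CC^m$ the point $(\bar{\mathbf{y}},\mathbf{0})\in\CC^{m+n}$ satisfies \emph{all} of $S_i$: the constraints not involving $\mathbf{z}$ because $\bar{\mathbf{y}}$ satisfies them; the $\mathbf{z}$-equations because a form that is homogeneous of positive degree in $z_1,\dots,z_n$ vanishes when $\mathbf{z}$ is set to zero; and there are no $\mathbf{z}$-inequations. Hence $(\bar{\mathbf{y}},\mathbf{0})\in\Sol(S_i)$. (This shortcut also avoids invoking the surjectivity of $\Sol(S_i)\to\Sol((S_i)_{<z_n})$, which holds but is not needed.)

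The conclusion is then immediate: if some $\bar{\mathbf{y}}$ lay in $\Sol((S_i)_{<z_n})\cap\Sol((S_{i'})_{<z_n})$ for $i\neq i'$, the point $(\bar{\mathbf{y}},\mathbf{0})$ would lie in $\Sol(S_i)\cap\Sol(S_{i'})$, contradicting the disjointness of the solution sets in the Thomas decomposition. I do not anticipate a genuine obstacle; the only delicate point is the bookkeeping in the classification step -- establishing that ``leader below $z_n$'' coincides with ``does not involve any $z_j$'' -- combined with the correct invocation of the homogeneous-linear form of the $\mathbf{z}$-equations supplied by the hypothesis.
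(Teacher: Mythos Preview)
Your argument is correct and considerably more elegant than the paper's, but it rests on a slightly stronger hypothesis. The paper's explicit assumption on the decomposition $S_1,\dots,S_r$ is only that no $S_i$ contains an inequation with leader among the $z_j$; the homogeneity of the $z$-equations is stated for the \emph{input} system $S$ and is only remarked to be preserved by a ``judicious'' algorithm. Your zero-section trick---lifting $\bar{\mathbf y}\in\Sol((S_i)_{<z_n})$ to $(\bar{\mathbf y},\mathbf 0)\in\Sol(S_i)$---needs the homogeneity to hold in each $S_i$, which you correctly flag and justify via that remark. Under this reading your proof is complete; and since in the paper's actual application (Step~4 of Algorithm~\ref{alg:simpleregularitydecomposition}) the homogeneity is indeed preserved, as the proofs of Propositions~\ref{prop:algSing} and~\ref{prop:dim} establish, your argument fully suffices for what the paper needs.

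The paper's own proof is markedly more involved: it works only with the weaker hypothesis (no $z$-inequations in the $S_i$, homogeneity only in $S$) and proceeds by contradiction. Assuming two projected solution sets meet at some $\beta$, it extends $\beta$ to full solutions of both systems, finds the highest coordinate $z_k$ where they differ, and uses the homogeneity of $S$ to conclude that the fibre over the common truncation is an infinite affine subspace. A pigeonhole argument then forces some $S_{j_1}$ to have no constraint with leader $z_k$, producing a new common point one step higher, and one iterates until a contradiction. What the paper's approach buys is the marginally greater generality; what yours buys is a one-line proof that makes the geometric reason for disjointness transparent: the zero vector is a canonical lift.
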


\begin{proof}
  We first note that any subsystem $(S_i)_{< z_j}$ is also simple. By the
  properties of simple algebraic systems \cite[Subsect.~2.2.1]{dr:habil},
  every solution
\[
(\alpha_{j+1}, \alpha_{j+2}, \ldots, \alpha_n, \beta_1, \ldots, \beta_m) \in \mathbb{C}^{(n+m)-j}
\]
of the subsystem $(S_i)_{< z_j}$ can be extended to a solution
\[
(\alpha_j, \alpha_{j+1}, \alpha_{j+2}, \ldots, \alpha_n, \beta_1, \ldots, \beta_m) \in \mathbb{C}^{(n+m)-(j-1)}
\]
of the larger subsystem $(S_i)_{< z_{j-1}}$. Indeed, the subsystem $(S_i)_{< z_{j-1}}$ can differ from $(S_i)_{< z_j}$ by at most one additional equation or inequation with leader $z_{j-1}$ which then restricts the possible values for $\alpha_j$. In the case of $j=1$,
we set $(S_i)_{< z_0}:= S_i$. 

For $\beta = (\beta_1, \ldots, \beta_m) \in  \mathbb{C}^m$, we define the intersections
\[
\begin{array}{rcl}
V_{\beta} & := & {\rm Sol}(S) \cap \Bigl\{\, (z_1, \ldots, z_n, \beta_1, \ldots, \beta_m) \mid z_1, \ldots, z_n \in \mathbb{C}\, \Bigr\}
\end{array}
\]
and for $i \in \{ 1, \ldots, n \}$ and $(\alpha_{i+1}, \alpha_{i+2}, \ldots, \alpha_n) \in \mathbb{C}^{n-i}$ set
\begin{multline*}
  V_{\alpha,\beta} := \Bigl\{\, (\alpha_i, \alpha_{i+1}, \ldots,
  \alpha_n, \beta_1, \ldots, \beta_m) \mid
  \alpha_i \in \mathbb{C} \mbox{ and  }\\
  \exists\, \alpha_1, \ldots, \alpha_{i-1} \in \mathbb{C}\, :\,
  (\alpha_1, \ldots, \alpha_n, \beta_1, \ldots, \beta_m) \in {\rm Sol}(S)\,
  \Bigr\}.
\end{multline*}
Since the equations in $S$ with a leader in $\{ z_1, \ldots, z_n \}$ are homogeneous of degree one as polynomials in $z_1$, \ldots, $z_n$, for each $\beta = (\beta_1, \ldots, \beta_m) \in \mathbb{C}^m$ the set $V_{\beta}$ is either empty or an affine subspace of $\mathbb{C}^{n+m}$. For the same reason, for each $i \in \{ 1, \ldots, n \}$ and $\alpha = (\alpha_{i+1}, \alpha_{i+2}, \ldots, \alpha_n) \in \mathbb{C}^{n-i}$ the set $V_{\alpha,\beta}$ is also either empty or an affine subspace of $\mathbb{C}^{(n+m)-(i-1)}$.

Assume that ${\rm Sol}((S_{i_1})_{< z_n})$ and
${\rm Sol}((S_{i_2})_{< z_n})$ are not disjoint for $i_1 \neq i_2$. Let
$(\beta_1, \ldots, \beta_m) \in {\rm Sol}((S_{i_1})_{< z_n}) \cap {\rm
  Sol}((S_{i_2})_{< z_n})$. Since both $S_{i_1}$ and $S_{i_2}$ are simple
algebraic systems, the point $(\beta_1, \ldots, \beta_m)$ can be extended to
solutions
$\rho = (\alpha_1^{(i_1)}, \alpha_2^{(i_1)}, \ldots, \alpha_n^{(i_1)},
\beta_1, \ldots, \beta_m)$ and
$(\alpha_1^{(i_2)}, \alpha_2^{(i_2)}, \ldots, \alpha_n^{(i_2)}, \beta_1,
\ldots, \beta_m)$ of $S_{i_1}$ and $S_{i_2}$, respectively. The
disjointness of the solution sets ${\rm Sol}(S_{i_1})$ and
${\rm Sol}(S_{i_2})$ implies that there exists $k \in \{ 1, \ldots, n \}$
such that $\alpha_k^{(i_1)} \neq \alpha_k^{(i_2)}$. Let $k$ be maximal with
that property. Hence,
$(\alpha_k^{(i_1)}, \ldots, \alpha_n^{(i_1)}, \beta_1, \ldots, \beta_m)$
and
$(\alpha_k^{(i_2)}, \ldots, \alpha_n^{(i_2)}, \beta_1, \ldots, \beta_m)$
are two distinct elements of the affine subspace $V_{\alpha,\beta}$ of
$\mathbb{C}^{(n+m)-(k-1)}$, where
$\alpha = (\alpha_{k+1}^{(i_1)}, \alpha_{k+2}^{(i_1)}, \ldots,
\alpha_n^{(i_1)}) = (\alpha_{k+1}^{(i_2)}, \alpha_{k+2}^{(i_2)}, \ldots,
\alpha_n^{(i_2)})$. Therefore, $V_{\alpha,\beta}$ is not finite.

We introduce the index set
\[
I(\rho, k) = \Bigl\{\, i \in \{ 1, \ldots, r \} \mid (\alpha_{k+1}^{(i_1)}, \ldots, \alpha_n^{(i_1)}, \beta_1, \ldots, \beta_m) \in
{\rm Sol}((S_i)_{< z_k})\, \Bigr\}.
\]
Then we have $i_1$, $i_2 \in I(\rho, k)$ and
\[
V_{\alpha,\beta} = \bigcup_{i \in I(\rho, k)} {\rm Sol}((S_i)_{< z_{k-1}}).
\]
Since the affine subspace $V_{\alpha,\beta}$ of $\mathbb{C}^{(n+m)-(k-1)}$ is not finite, but $I(\rho, k)$ is finite, there exists $j_1 \in I(\rho, k)$ such that ${\rm Sol}((S_{j_1})_{< z_{k-1}})$ is infinite. Hence, $S_{j_1}$ contains no equation with a leader $z_k$. However, by assumption, $S_{j_1}$ contains no inequation with a leader $z_k$ either. By exchanging the roles of $i_1$ and $i_2$ if necessary, we may assume without loss of generality that $j_1 \neq i_1$. We conclude that $(S_{i_1})_{< z_{k-1}}$ and $(S_{j_1})_{< z_{k-1}}$ have the common solution $(\alpha_k^{(i_1)}, \ldots, \alpha_n^{(i_1)}, \beta_1, \ldots, \beta_m)$.

If $k = 1$, this observation contradicts the disjointness of
${\rm Sol}(S_{i_1})$ and ${\rm Sol}(S_{j_1})$. Otherwise, the thus obtained
common solution can be extended to a solution
$(\alpha_1^{(j_1)}, \alpha_2^{(j_1)}, \ldots, \alpha_{k-1}^{(j_1)},
\alpha_k^{(i_1)}, \ldots, \alpha_n^{(i_1)}, \beta_1, \ldots, \beta_m)$ of
$S_{j_1}$. By a similar reasoning as above, the disjointness of the
solution sets ${\rm Sol}(S_{i_1})$ and ${\rm Sol}(S_{j_1})$ implies that
there exists $l \in \{ 1, \ldots, k-1 \}$ such that
$\alpha_l^{(i_1)} \neq \alpha_l^{(j_1)}$. Let $l$ be maximal with that
property. Then $V_{\alpha',\beta}$ is not finite, where
$\alpha' = (\alpha_{l+1}^{(i_1)}, \alpha_{l+2}^{(i_1)}, \ldots,
\alpha_n^{(i_1)})$. Hence, there exists $j_2 \in I(\rho, l)$ such that
$S_{j_2}$ neither contains an equation with a leader $z_l$ nor an
inequation with a leader $z_l$. Without loss of generality, we may assume
$j_2 \neq i_1$. Then
$(\alpha_l^{(i_1)}, \ldots, \alpha_n^{(i_1)}, \beta_1, \ldots, \beta_m)$ is
a common solution of $(S_{i_1})_{< z_{l-1}}$ and $(S_{j_2})_{<
  z_{l-1}}$. If $l = 1$, this is a contradiction. Otherwise, this argument
can be repeated to obtain a contradiction. Hence, the sets
${\rm Sol}((S_1)_{< z_n})$, ${\rm Sol}((S_2)_{< z_n})$, \ldots,
${\rm Sol}((S_r)_{< z_n})$ are pairwise disjoint.
\end{proof}

\begin{remark}\label{rem:compr}
  The assumption in Proposition~\ref{prop:disjointprojection} about the
  absence of inequations with leader in $\{ z_1, \ldots, z_n \}$ cannot be
  omitted. For example, let $R = \CC[y][z_1, z_2]$ and $z_1 > z_2 > y$ and
  consider the system $S = \{ z_1 = 0 \}$. Then the systems
  \[
    S_1\colon \quad \left\{
      \begin{array}{rcl}
        z_1 & = & 0\\[0.2em]
        z_2 & = & 0
      \end{array}\right.
    \qquad
    S_2\colon \quad \left\{
      \begin{array}{rcl}
        z_1 & = & 0\\[0.2em]
        z_2 & \neq & 0\\[0.2em]
        y & = & 0
      \end{array}\right.
    \qquad
    S_3\colon \quad \left\{
      \begin{array}{rcl}
        z_1 & = & 0\\[0.2em]
        z_2 & \neq & 0\\[0.2em]
        y & \neq & 0
      \end{array}\right.
  \]
  provide a Thomas decomposition of $S$ with respect to $>$, where
  ${\rm Sol}((S_1)_{< z_2})$ and ${\rm Sol}((S_2)_{< z_2})$ are not
  disjoint and ${\rm Sol}((S_1)_{< z_2})$ and ${\rm Sol}((S_3)_{< z_2})$
  are not disjoint either. Note, however, that this Thomas decomposition
  involves case distinctions which would not occur in an application of the
  Thomas algorithm to $S$. In fact, the original algebraic system $S$ is
  already simple.  The assumption of
  Proposition~\ref{prop:disjointprojection} is automatically satisfied by
  any \emph{comprehensive} Thomas decomposition which also can be computed
  algorithmically \cite[Alg.~3.80]{baechler:phd}.
\end{remark}

\begin{theorem}\label{thm:algregdecomp}
  Algorithm~\ref{alg:simpleregularitydecomposition} terminates and is
  correct.
\end{theorem}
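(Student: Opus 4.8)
The plan is to prove termination and correctness separately, assembling the preparatory results established above (Propositions~\ref{prop:algSing}, \ref{prop:dim} and \ref{prop:disjointprojection}).

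\emph{Termination.} Every subroutine invoked is known to terminate: Step~1 is a Gr\"obner basis computation together with the saturation described in Remark~\ref{rmrk:genalgideal}; Step~2 is a prime decomposition, which is effective over $\CC$; and the algebraic Thomas decomposition in Step~4 terminates by the termination of the underlying Thomas algorithm (e.g.\ the one in \cite{bglr:thomasalg}). Since the loop in Step~3 ranges over the finite index set $\{1,\dots,t\}$, the whole algorithm terminates.

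\emph{Correctness.} Fix a prime component $\I_{\ell,k}=\I_{\ell,k}(S)$ and write $V_{\ell,k}=\Sol(\I_{\ell,k})$. First I would show that the projected systems $S_{k,1},\dots,S_{k,r_k}$ returned in Step~5 partition $V_{\ell,k}$. For disjointness I would check that the combined system \eqref{eq:determVessiot} over $\D_\ell^{{\rm\,ex}}$ meets the hypotheses of Proposition~\ref{prop:disjointprojection} once its variables $\mathbf z$ are identified with the auxiliary indeterminates $\dv>\cv>\bv>\av$ and its variables $\mathbf y$ with the jet variables $\uv,\xv$: the system contains no inequation among $\av,\bv,\cv,\dv$ at all, and every equation involving these is homogeneous of degree one in them, so --- provided the Thomas decomposition in Step~4 is computed judiciously, e.g.\ comprehensively (cf.\ Remarks~\ref{rem:algothomas} and \ref{rem:compr}) --- each $S_{k,i}^{{\rm ex}}$ inherits this shape and Proposition~\ref{prop:disjointprojection} yields that $\Sol(S_{k,1}),\dots,\Sol(S_{k,r_k})$ are pairwise disjoint. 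For the covering I would observe that any $\rho\in V_{\ell,k}$ lifts to a solution of \eqref{eq:determVessiot} by setting $\av=\bv=\cv=\dv=\vec{0}$, that this lifted point lies in exactly one $S_{k,i}^{{\rm ex}}$ of the Thomas decomposition, and hence $\rho\in\Sol(S_{k,i})$; thus $V_{\ell,k}=\bigsqcup_i\Sol(S_{k,i})$.

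It then remains to classify each piece. By Proposition~\ref{prop:algSing}, $\Sol(S_{k,i})$ consists either entirely of points smooth in $V_{\ell,k}$ or entirely of algebraic singularities, the two cases being distinguished precisely by whether the number of equations in $S_{k,i}^{{\rm ex}}$ with leader among $\cv,\dv$ equals $\mathrm{codim}\,V_{\ell,k}$ --- exactly the test performed in Step~5 (and generators $p_{k,j}$ of order below $\ell$ contribute only the trivial equation, hence do no harm). In the singular case $\Sol(S_{k,i})$ is reported as a contribution to $\mathrm{ASing}\bigl(\overline{\Jc{\ell}(S)}\bigr)$; in the smooth case Proposition~\ref{prop:dim} identifies $\Sol(S_{k,i})$ as a regular algebraic jet set and supplies the dimensions of its Vessiot and symbol cones in terms of $N_{\av}$, $N_{\bv}$, with an $n$-dimensional transversal complement existing exactly when $N_{\av}=0$. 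Collecting these facts exhibits $\Jc{\ell,k}=V_{\ell,k}$ as a disjoint union of finitely many regular algebraic jet sets and the algebraic singularities lying on it, i.e.\ a regularity decomposition in the sense of Definition~\ref{def:regdecomp}; since $\I_\ell(S)$ is equidimensional without embedded primes (Remark~\ref{rem:sat}), $\mathrm{codim}\,V_{\ell,k}$ is well defined and the construction over all $k$ is compatible with regarding intersections of components as algebraic singularities. The hard part here is entirely contained in Proposition~\ref{prop:disjointprojection} --- that passing to the projected systems preserves disjointness; at the level of the theorem the only real care is in verifying that \eqref{eq:determVessiot} fits that proposition's framework and that the ranking conditions (i)--(iii) fixed before the algorithm make the Thomas decomposition respect the homogeneity in $\av,\bv,\cv,\dv$, so that the projected systems are again simple and the quantities $N_{\av}$, $N_{\bv}$ and the number of $\cv,\dv$-leaders are well defined and constant on each $\Sol(S_{k,i})$.
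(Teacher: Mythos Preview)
Your proof is correct and follows essentially the same approach as the paper: termination by termination of subroutines, disjointness of the projected systems via Proposition~\ref{prop:disjointprojection} after verifying its hypotheses on \eqref{eq:determVessiot}, and the dichotomy into algebraic singularities versus regular algebraic jet sets via Propositions~\ref{prop:algSing} and \ref{prop:dim}. You spell out the covering step (lifting $\rho\in V_{\ell,k}$ by $\av=\bv=\cv=\dv=\vec 0$) more explicitly than the paper, which simply asserts that the $S_{k,i}$ form a Thomas decomposition; otherwise the arguments coincide.
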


\begin{proof}
  The termination is obvious, as only terminating subalgorithms are
  used. For the correctness, it is sufficient to show that the output is
  correct for any prime component $\mathcal{I}_{\ell,k}(S)$ of
  $\mathcal{I}(S_{\ell})$. Let $k \in \{ 1, \ldots, t \}$.  We argue first
  that the output systems $S_{k,1}$, \ldots, $S_{k,r_k}$ form a Thomas
  decomposition. Since $S_{k,i}$ is obtained from the simple algebraic
  system $S_{k,i}^{{\rm ex}}$ by omitting the equations and inequations
  with a leader among $\dv$, $\cv$, $\bv$, $\av$, the algebraic system
  $S_{k,i}$ is simple.  In the proofs of Proposition~\ref{prop:algSing} and
  \ref{prop:dim}, it was shown that we are in a situation where
  Proposition~\ref{prop:disjointprojection} is applicable to the Thomas
  decomposition $S_{k,1}^{{\rm ex}}$, \ldots, $S_{k,r_k}^{{\rm
      ex}}$. Hence, the output systems $S_{k,1}$, \ldots, $S_{k,r_k}$ have
  pairwise disjoint solution sets ${\rm Sol}(S_{k,i})$ which either consist
  entirely of algebraic singularities by Proposition~\ref{prop:algSing} or
  are regular algebraic jet sets by Proposition~\ref{prop:dim}.
\end{proof}

Finally, we describe how one determines a regularity decomposition of a
general differential system $S$ in some order $\ell \in \NN$. The first
step is to compute a differential Thomas decomposition of $S$ into simple
differential systems $S_1$, $\dots$, $S_r$. For each simple system one
needs to check that the order $\ell$ we have chosen for the regularity
decomposition is sufficiently high. This means that we need to guarantee
that no equation or inequation in a simple differential system $S_i$ is cut
off when going from $S_i$ to the algebraic ideal $\I_\ell(S_i)$. If the
simple differential systems $S_1$, $\dots$, $S_r$ do contain an equation or
inequation of order greater than $\ell$, then a regularity decomposition in
this order is not possible. In this case one needs to adjust the order
$\ell$. If the order is high enough then one computes in a last step a
regularity decomposition of each simple differential system $S_i$ in order
$\ell$ with Algorithm~\ref{alg:simpleregularitydecomposition}. A formal
summary of this process is Algorithm~\ref{alg:regularitydecomposition}
below.

\begin{algorithm}[Regularity Decomposition of a General Differential System]\label{alg:regularitydecomposition} \quad
\begin{algorithmic}[1]
\setlength{\baselineskip}{1.2\baselineskip}
\INPUT a differential system $S$ defined over $K\{ u \}$ and order $\ell \in \NN$
\OUTPUT regularity decompositions in order $\ell$ of the irreducible components of the algebraic jet sets of the simple systems in a differential Thomas decomposition of $S$
\ALGORITHM
\STATE compute a differential Thomas decomposition $S_1$, \ldots, $S_r$ of the differential system $S$
\IF{one of the systems $S_i$ has an equation or an inequation of order greater than $\ell$}
\STATE \textbf{error}: order $\ell$ too small.
\ENDIF
\STATE \textbf{return} the regularity decompositions in order $\ell$ of the simple differential systems $S_i$
determined by Algorithm~\ref{alg:simpleregularitydecomposition}.
\end{algorithmic}
\end{algorithm}

\section{Regular Differential Equations}\label{sec:regde}

A basic assumption in most of the geometric theory of differential
equations is that one is dealing with a \emph{regular} equation.  This
means that not only the given differential equation
$\Jc{\ell}\subset\je{\ell}$ but also all its prolongations to higher order
are smooth manifolds on which symbol and Vessiot spaces define vector
bundles.  For nonlinear systems, it is generally very hard to verify these
infinitely many conditions and no effective method is known.  We now
provide a definition of regular differential equation adapted to our
framework and prove that we can identify in the output of
Algorithm~\ref{alg:simpleregularitydecomposition} one unique regular
equation for each irreducible component and that this equation lies dense
in the irreducible component.

The key problem encountered here is that the definition of a regular
differential equation requires to look at prolongations.  So far we could
avoid prolongations, as we assumed throughout that we start with a
differential system $S$ and then associate with it at any order $\ell$ an
algebraic jet set defined via the differential ideal $\I_{\mathrm{diff}}(S)$.
The problem of computing prolongations then corresponds to explicitly
constructing the polynomial ideals $\I_{\ell}(S)$, a question which has
been settled above.  By contrast, we assume now that we start with an
algebraic differential equation $\Jc{\ell}\subset\je{\ell}$ which is a
regular algebraic jet set in the sense of Definition~\ref{def:regjetset}.
The geometric theory describes an intrinsic prolongation process which,
however, assumes that one is dealing with a fibred submanifold.  In our
framework, this assumption is not necessarily satisfied and thus we must
develop another approach.

As a locally Zariski closed subset of $\je{\ell}$, we may consider
$\Jc{\ell}$ as the solution set of an algebraic system $S$ in the jet
variables up to order $\ell$.  Identifying the jet variables with the
derivatives of the dependent variables, we can also interpret $S$ as a
differential system which we associate with $\Jc{\ell}$.  Forming the
differential ideal $\I_{\mathrm{diff}}(S)$ corresponds to adding all
differential consequences of the equations describing $\Jc{\ell}$.
Obviously, this construction is independent of the choice of the algebraic
system $S$.

It may happen that $1\in\I_{\mathrm{diff}}(S)$.  In this case, the system
$S$ is differentially inconsistent and any further differential analysis is
pointless.  Otherwise, we consider for any $k\geq0$ the algebraic jet sets
$\Jc{\ell+k}(S)$. It may happen that $\Jc{\ell}(S)\subsetneq\Jc{\ell}$,
namely if some of the differential consequences are of an order less than
or equal to~$\ell$ (i.\,e.\ if hidden integrability conditions exist in
$S$).  In this case, it is again pointless to analyse $\Jc{\ell}$: one
should study $\Jc{\ell}(S)$ instead.  Otherwise, we call the algebraic jet
set $\Jc{\ell+k}=\Jc{\ell+k}(S)$ the $k$-\emph{th prolongation} of
$\Jc{\ell}$.

\begin{definition}\label{def:regde}
  The algebraic differential equation $\Jc{\ell}\subset\je{\ell}$ is called
  \emph{regular}, if the differential system $S$ associated with it
  satisfies
  \begin{enumerate}[label=\textup{(\roman*)},
                    itemsep=0pt,parsep=0pt,topsep=0pt]
  \item $\I_{\mathrm{diff}}(S)$ is a prime differential ideal,
  \item $\Jc{\ell}(S)=\Jc{\ell}$ and
  \item for all $k\geq0$ the algebraic jet sets $\Jc{\ell+k}(S)$ are
    regular and algebraic differential equations.
  \end{enumerate}
\end{definition}

Given an algebraic differential equation $\Jc{\ell}\subset\je{\ell}$, it is
not obvious how one can effectively verify that it is regular, since the
above definition comprises infinitely many condition as in the geometric
theory.  We now show that Algorithm~\ref{alg:simpleregularitydecomposition}
solves this problem to some extent,
as one can always identify in its output regular differential equations.
% Thus given $\Jc{\ell}\subset\je{\ell}$ and a differential system $S$
% corresponding to it, we can apply Algorithm~\ref{alg:regularitydecomposition}
% to it.  The algebraic differential
% equations $\Jc{\ell}$ is regular, if the output of the algorithm consists
% entirely of regular components, on all of these components the Vessiot
% distribution looks the same and the union of all components is exactly
% $\Jc{\ell}$ (in other words, all performed case distinctions are artifacts
% of the Thomas decomposition and not necessary for the detection of
% singularities).

\begin{proposition}\label{prop:GenSimpSys}
  For each prime component $\I_{\ell,k}(S)$ arising in
  Algorithm~\ref{alg:simpleregularitydecomposition}, there exists among the
  simple systems $S_{k,i}$ in the output a unique distinguished
  system~$S_{\ell,k}^{\mathrm{gen}}$ such that
  $\Sol{(S_{\ell,k}^{\mathrm{gen}})}$ is Zariski dense in $\J_{\ell,k}$.
\end{proposition}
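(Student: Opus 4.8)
The plan is to combine the structural facts already established for Algorithm~\ref{alg:simpleregularitydecomposition} with the elementary observation that a finite partition of an irreducible variety into locally Zariski closed pieces has exactly one dense member. Recall that $\J_{\ell,k}=\Sol(\I_{\ell,k}(S))$ is irreducible, since $\I_{\ell,k}(S)$ is a prime component of $\I_\ell(S)$ (Step~2 of the algorithm), and that by Theorem~\ref{thm:algregdecomp} the output systems $S_{k,1},\dots,S_{k,r_k}$ form a Thomas decomposition, so the sets $\Sol(S_{k,i})$ are pairwise disjoint. So the only thing to verify carefully is that these pieces are locally closed and together cover $\J_{\ell,k}$.

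First I would check the covering. Each $\Sol(S_{k,i})\subseteq\Sol(\I_{\ell,k}(S))$ by construction. Conversely, every $\rho\in\Sol(\I_{\ell,k}(S))$ extends to a solution of the combined system \eqref{eq:determVessiot} by setting $\av=\bv=\cv=\dv=\vec 0$: the equations $p_{k,j}=0$ hold because $\rho\in\Sol(\I_{\ell,k}(S))$, while $\vv(p_{k,j})=0$ and $\JJ(p_{k,j})=0$ are homogeneous of degree one in $\av,\bv$ respectively $\cv,\dv$ and hence are satisfied by the zero vector. Since $S_{k,1}^{\mathrm{ex}},\dots,S_{k,r_k}^{\mathrm{ex}}$ is a Thomas decomposition of \eqref{eq:determVessiot}, this extended point lies in some $\Sol(S_{k,i}^{\mathrm{ex}})$, and hence $\rho$ lies in the projected system $\Sol(S_{k,i})$. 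Thus $\J_{\ell,k}=\bigcup_{i=1}^{r_k}\Sol(S_{k,i})$. Moreover, as the solution set of an algebraic system given by equations and inequations, each $\Sol(S_{k,i})$ is the difference of two Zariski closed sets, hence locally Zariski closed.

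Now I would extract the distinguished system. Taking closures in the irreducible variety $\J_{\ell,k}$ gives $\J_{\ell,k}=\bigcup_i\overline{\Sol(S_{k,i})}$, and since an irreducible variety is not a finite union of proper closed subsets, at least one closure equals $\J_{\ell,k}$; fix such an index and call the corresponding system $S_{\ell,k}^{\mathrm{gen}}$. Because $\Sol(S_{\ell,k}^{\mathrm{gen}})$ is locally closed and dense in $\J_{\ell,k}$, it is a nonempty Zariski open subset of $\J_{\ell,k}$; its complement is then a proper Zariski closed subset containing every other (disjoint) piece $\Sol(S_{k,i})$, so no other piece is dense. This gives existence and uniqueness. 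Finally, although it is not required by the statement, it is natural to remark that $S_{\ell,k}^{\mathrm{gen}}$ is one of the regularity components $\Jcp{\ell,k}{i}$ rather than a contribution to $\mathrm{ASing}(\overline{\Jc{\ell}(S)})$: the smooth locus of $\J_{\ell,k}$ is Zariski open and dense and therefore meets the dense piece $\Sol(S_{\ell,k}^{\mathrm{gen}})$, so by Proposition~\ref{prop:algSing} the latter consists entirely of smooth points, whence by Proposition~\ref{prop:dim} it is a regular algebraic jet set. There is no genuinely hard step here; the only point that needs attention is that the pieces $\Sol(S_{k,i})$ really are locally closed and really do cover $\J_{\ell,k}$, which is exactly what the argument above supplies once Theorem~\ref{thm:algregdecomp} is invoked.
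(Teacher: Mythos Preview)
Your argument is correct, but it proceeds along a different line than the paper's. The paper works ``upstairs'' in the extended ambient space $\mathcal{D}_\ell^{\,\mathrm{ex}}$: it observes that the variety cut out by \eqref{eq:determVessiot} is a vector bundle over $\J_{\ell,k}$ (the extra equations being linear and homogeneous in $\av,\bv,\cv,\dv$), hence irreducible, and then invokes a general fact about Thomas decompositions (\cite[Cor.~2.2.66]{dr:habil}) stating that any Thomas decomposition of an irreducible variety has a unique system with dense solution set; finally it projects this distinguished system down. You instead work entirely ``downstairs'': you show directly that the projected pieces $\Sol(S_{k,i})$ partition $\J_{\ell,k}$ (using Theorem~\ref{thm:algregdecomp} for disjointness and the zero-section trick for covering), and then deduce existence and uniqueness of a dense piece from the elementary fact that an irreducible variety cannot be covered by finitely many proper closed subsets together with the observation that a locally closed dense subset is automatically open. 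Your route is a bit more self-contained, since it neither appeals to the cited corollary nor needs the irreducibility of the total space; the paper's route is slightly more conceptual in that it identifies the distinguished system already at the level of the extended Thomas decomposition before projecting.
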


\begin{proof}
  System~\eqref{eq:determVessiot} comprises the equations
  $p_{k,1}=0,\dots , p_{k,s_k}=0$ defining the irreducible variety
  $\J_{\ell,k}$ and linear equations in the auxiliary indeterminates $\av$,
  $\bv$, $\cv$, $\dv$. Hence, the variety defined by
  \eqref{eq:determVessiot} is trivially fibred over $\J_{\ell,k}$ and
  therefore irreducible. By \cite[Cor.~2.2.66]{dr:habil}, any Thomas
  decomposition for an irreducible variety contains a unique simple system
  whose solution set is dense in that variety. Therefore there exists a
  unique index $i$ such that $\Sol (S_{k,i}^{{\rm ex}})$ is a dense subset
  of the variety defined by \eqref{eq:determVessiot}. Since
  $S_{k,i}^{{\rm ex}}$ contains no inequations with leader among the $\av$,
  $\bv$, $\cv$, $\dv$ and the equations involving $\av$, $\bv$, $\cv$,
  $\dv$ are homogeneous of degree one, the projected system $S_{k,i}$ has
  the claimed property.
\end{proof}

\begin{theorem}\label{thm:regde}
  In the notation of Proposition~\ref{prop:GenSimpSys}, the set
  $\Sol{(S_{\ell,k}^{\mathrm{gen}})}$ is a regular differential equation.
\end{theorem}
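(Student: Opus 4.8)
The plan is to verify, for the differential system obtained by reading the defining equations and inequations of $S_{\ell,k}^{\mathrm{gen}}$ as differential polynomials, the three properties of Definition~\ref{def:regde}. Write $\J:=\Sol{(S_{\ell,k}^{\mathrm{gen}})}$; by Proposition~\ref{prop:GenSimpSys} its Zariski closure is the irreducible variety $\J_{\ell,k}=\Sol{(\I_{\ell,k}(S))}$, and by Proposition~\ref{prop:dim} the set $\J$ is already a regular algebraic jet set. Since $S_{\ell,k}^{\mathrm{gen}}$ is a simple algebraic system defining the prime ideal $\I_{\ell,k}(S)$ up to saturation by its initials, and $\ell$ was chosen large enough that, by Proposition~\ref{prop:diffprimedecomp} and the surrounding discussion, $\I_{\ell,k}(S)$ is the order-$\ell$ truncation of the prime differential ideal $\langle p_{k,1},\dots,p_{k,s_k}\rangle_\Delta:q^\infty$, the first thing to record is that $S_{\ell,k}^{\mathrm{gen}}$ is, after tail reductions as in Lemma~\ref{lem:regsys}, a simple differential system with the same leaders as its algebraic counterpart.

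For property~(i) I would identify $\I_{\mathrm{diff}}(S_{\ell,k}^{\mathrm{gen}})$ with the prime differential ideal $\langle p_{k,1},\dots,p_{k,s_k}\rangle_\Delta:q^\infty$. The inclusion ``$\subseteq$'' is clear, since the defining equations of $S_{\ell,k}^{\mathrm{gen}}$ lie in $\I_{\ell,k}(S)$, and this prime contains neither the relevant initials and separants nor the inequations, so it survives the saturation and the radical in~\eqref{eq:dnss}, \eqref{eq:dnsssimp}. For ``$\supseteq$'' I would use that, being an essential component of $\I_{\mathrm{diff}}(S)$ for the simple system $S$, the ideal $\langle p_{k,1},\dots,p_{k,s_k}\rangle_\Delta:q^\infty$ is differentially generated by a characteristic set all of whose elements have order at most $\ell$ (here sufficient largeness of $\ell$ enters), hence lies in the radical differential ideal $\I_{\mathrm{diff}}(S_{\ell,k}^{\mathrm{gen}})$. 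For property~(ii), all equations and inequations of $S_{\ell,k}^{\mathrm{gen}}$ have order $\le\ell$, so $\I_\ell(S_{\ell,k}^{\mathrm{gen}})=\I_{\ell,k}(S)$ and $\K_\ell(S_{\ell,k}^{\mathrm{gen}})$ is generated by the product of the inequations of $S_{\ell,k}^{\mathrm{gen}}$; hence Lemma~\ref{lem:SolIl} applied to $S_{\ell,k}^{\mathrm{gen}}$ yields $\Jc{\ell}(S_{\ell,k}^{\mathrm{gen}})=\Sol{(\I_{\ell,k}(S))}\setminus\Sol{(\K_\ell(S_{\ell,k}^{\mathrm{gen}}))}=\Sol{(S_{\ell,k}^{\mathrm{gen}})}=\J=\Jc{\ell}$, so no hidden integrability conditions of order $\le\ell$ arise.

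Property~(iii) I would prove by induction on $k$, the case $k=0$ being Proposition~\ref{prop:dim}. By property~(i) each $\I_{\ell+k}(S_{\ell,k}^{\mathrm{gen}})$ is prime, so $\Sol{(\I_{\ell+k}(S_{\ell,k}^{\mathrm{gen}}))}$ is irreducible, and by the lemma preceding Remark~\ref{rem:sat} the projection $\pi^{\ell+1}_{\ell}$ maps $\Jc{\ell+1}(S_{\ell,k}^{\mathrm{gen}})$ onto $\Jc{\ell}(S_{\ell,k}^{\mathrm{gen}})=\Jc{\ell}$. The crucial observation is that this projection is an affine bundle: by Remark~\ref{rmrk:genalgideal}, $\I_{\ell+1}(S_{\ell,k}^{\mathrm{gen}})$ is obtained by saturating the ideal generated by the Janet prolongations $B_{\le\ell+1}$ by their initials, these initials are the separants of the original equations and hence do not vanish on $\Jc{\ell}$, the prolonged equations are affine-linear in the order-$(\ell+1)$ jet coordinates, and the inequations have order $\le\ell$; therefore the fibre of $\Jc{\ell+1}(S_{\ell,k}^{\mathrm{gen}})$ over each $\rho\in\Jc{\ell}$ is the full affine space cut out by the prolonged equations. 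Since $S_{\ell,k}^{\mathrm{gen}}$ is simple, the leaders of $B_{\le\ell+1}$ are combinatorially fixed and their leading coefficients (the separants) are nonzero on $\Jc{\ell}$, so the linear part has constant rank and all these fibres share the same dimension. Hence $\Jc{\ell+1}(S_{\ell,k}^{\mathrm{gen}})$ is an affine bundle over the smooth manifold $\Jc{\ell}$, in particular smooth; moreover the row-echelon argument of Proposition~\ref{prop:dim} now applies at every point (the pivots being the non-vanishing separants), showing that the symbol and Vessiot distributions have constant rank. Thus $\Jc{\ell+1}(S_{\ell,k}^{\mathrm{gen}})$ is a regular algebraic jet set and an algebraic differential equation, and since $S_{\ell,k}^{\mathrm{gen}}$ still has prime differential ideal, the same argument with $\ell+1$ in place of $\ell$ produces the next prolongation, completing the induction.

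I expect the main obstacle to be the affine-bundle description of the prolongations in the previous paragraph: one must carefully check that $\Jc{\ell+k}(S_{\ell,k}^{\mathrm{gen}})$ is neither enlarged by the Zariski closure hidden in the saturation of Remark~\ref{rmrk:genalgideal} — points lying over $\J_{\ell,k}\setminus\Jc{\ell}$ must be discarded again by the inequations, which works only because these have order $\le\ell$ — nor shrunk relative to the set of Janet-prolonged solutions, and one must confirm that the fixed combinatorial structure of the leaders forces the rank of the linear part to be globally, not merely generically, constant. A secondary delicate point is the ``$\supseteq$'' inclusion in property~(i), which rests on the characterisability of simple differential systems and on $\ell$ having been chosen large enough that the differential generators of each prime component already occur in order $\le\ell$. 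Once these points are settled, the remaining steps are routine.
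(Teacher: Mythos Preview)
Your proposal is correct and follows essentially the same line as the paper's proof: the core observation in both is that the Janet-prolonged equations $B_{\le\ell+1}$ are linear in their leaders with the \emph{original} separants as initials, so that no new separants or initials appear under prolongation and the pivots controlling the row-echelon form of the Vessiot/symbol system remain non-vanishing on the generic locus. Your affine-bundle framing of the prolongation is simply the geometric counterpart of the paper's algebraic row-echelon argument, and your explicit verification of properties~(i) and~(ii) of Definition~\ref{def:regde} fills in details that the paper leaves largely implicit (handling them in the general case via \cite[Thm.~4.13]{eh:tri2} on the leaders of characteristic sets and the ensuing passivity for the Janet division).
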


\begin{proof}
  Assume for notational simplicity that already $\I_{\mathrm{diff}}(S)$ is
  a prime differential ideal so that we can drop the index $k$. In this
  case, the ideal $\I_{\ell}(S)$ is generated by the triangular set
  $B_{\leq\ell}$ defined in \eqref{eq:Bl} followed by a saturation with
  respect to the inequations in $S$ (cf.\ Equations~\eqref{eq:dnss} and
  \eqref{eq:dnsssimp}). Since our ordering of the variables $\mathbf{c}$
  and $\mathbf{b}$, respectively, is linked to an orderly ranking of the
  derivatives $\mathbf{u}$, the two linear subsystems of
  \eqref{eq:determVessiot} arise now immediately in a row echelon
  form\footnote{Recall from Remark~\ref{rem:sat} that the saturation only
    eliminates unwanted points.  Hence at the remaining points we can use
    for the construction of the tangent space the equations in the
    triangular set $B_{\leq\ell}$ instead of some ideal generators obtained
    after the saturation.} and its pivots are separants of the equations in
  $B_{\leq\ell}$.  Furthermore, in the generic system
  $S_{\ell}^{\mathrm{gen}}$ of the algebraic Thomas decomposition the
  separants and initials of the equations in $B_{\leq\ell}$ are implied
  being non-zero. It is now trivial to see that
  $\Sol{(S_{\ell,k}^{\mathrm{gen}})}$ is a regular algebraic jet set.

  We now show that the same holds for
  $\Sol{(S_{\ell+1}^{\mathrm{gen}})}$, the generic branch obtained by
  applying our algorithm at the next order. By induction, we obtain that
  the generic branch defines at any prolongation order a regular algebraic
  jet set and thus our claim. By the same arguments as above, the ideal
  $\I_{\ell+1}(S)$ is generated by the triangular set $B_{\leq\ell+1}$
  followed by a saturation.  Since we assume that $S$ contains no equations
  or inequations of an order greater than $\ell$, $B_{\leq\ell+1}$ is
  obtained by augmenting $B_{\leq\ell}$ by certain formal derivatives
  (defined in Appendix~\ref{sec:geojet}) of its elements of order
  $\ell$. By the properties of the formal derivative, the new elements are
  linear in their leaders and their initials (and thus also their
  separants) are the separants of the elements of $B_{\leq\ell}$ from which
  they are derived. This implies that no new separants or initials arise
  during the prolongation. Again, these separants and initials are implied
  to be non-zero by the algebraic Thomas decomposition. Since again the
  linear subsystems of \eqref{eq:determVessiot} arise immediately in
  triangular form with separants as pivots, the made observation about the
  separants entails trivially that $\Sol{(S_{\ell+1}^{\mathrm{gen}})}$ is a
  regular algebraic jet set, too.

  For the general case, we exploit again that, by Lemma~\ref{lem:regsys},
  we may assume that $S$ is a regular differential system.
  \cite[Thm.~4.13]{eh:tri2} asserts that any characteristic set $C$
  describing a prime component of $\I_{\mathrm{diff}}(S)$ has the same
  leaders as the differential system $S$.  In
  Algorithm~\ref{alg:simpleregularitydecomposition}, we first compute in
  Step~2 some basis for each prime component $\I_{\ell,k}(S)$ and then in
  Step~4 perform an algebraic Thomas decomposition.  The generic branch of
  this decomposition determines a characteristic set~$C_{\ell,k}$
  describing $\I_{\ell,k}(S)$, namely the equations in
  $S_{\ell,k}^{\mathrm{gen}}$.  Furthermore, among the inequations in
  $S_{\ell,k}^{\mathrm{gen}}$ we must find the initials and separants of
  $C$.  As in the proof of Proposition~\ref{prop:diffprimedecomp},
  \cite[Thm.~4.13]{eh:tri2} allows us to interpret $C$ also as a
  differential characteristic set.  By definition of a simple differential
  system, $S$ is passive for the Janet division.  This implies that $C$
  must also be passive for the Janet division.  Indeed, otherwise $C$ would
  induce integrability conditions and any characteristic set of the ideal
  induced by $C$ would require additional leaders which contradicts
  \cite[Thm.~4.13]{eh:tri2}.  But now we can apply to $C$ exactly the same
  reasoning as in the special case above and conclude that
  $\Sol{(S_{\ell,k}^{\mathrm{gen}})}$ is a regular differential equation.
\end{proof}

\begin{corollary}\label{cor:regde}
  For any index $k$, the set $\Sol{(S_{\ell,k}^{\mathrm{gen}})}$ consists
  entirely of regular points of the algebraic differential equation
  $\J_{\ell,k}$.
\end{corollary}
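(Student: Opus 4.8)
The plan is to reduce the statement to the argument already carried out in the proof of Theorem~\ref{thm:regdense}, now applied to the generic branch. Fix an index $k$ and write $T:=\Sol{(S_{\ell,k}^{\mathrm{gen}})}$. First I would record what Theorem~\ref{thm:regde} and Proposition~\ref{prop:GenSimpSys} supply for free: $T$ is a regular algebraic jet set which is Zariski dense in the irreducible variety $\J_{\ell,k}$ and, being the solution set of a simple algebraic system, locally Zariski closed; hence $T$ is a Zariski open (and dense) subset of $\J_{\ell,k}$. By Definition~\ref{def:regjetset} it is a smooth manifold on which the Vessiot distribution and the symbol define smooth vector bundles. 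Since smoothness, the tangent space, the contact system, and therefore the Vessiot and symbol spaces are all local notions, every $\rho\in T$ is a smooth point of $\J_{\ell,k}$, and $\V[\J_{\ell,k}]$ and $\Nc{\ell}{}$ restrict on $T$ to the corresponding distributions of $T$ itself. One also needs $\J_{\ell,k}$ itself to be a locally integrable algebraic differential equation for Definition~\ref{def:regsing} to apply; this is a consequence of Proposition~\ref{prop:locint} and Lemma~\ref{lem:SolIl}. Thus the first two requirements of Definition~\ref{def:regsing}(i) hold at every $\rho\in T$, and it only remains to exhibit, on a metric open neighbourhood $\U\subseteq T$ of each such $\rho$, an $n$-dimensional, transversal, involutive, smooth distribution $\H$ with $\V[\J_{\ell,k}]=\Nc{\ell}{}\oplus\H$.

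For the distribution $\H$ I would re-run on $T$ the geometric argument of the proof of Theorem~\ref{thm:regdense}. By the proof of Theorem~\ref{thm:regde}, the equations of $S_{\ell,k}^{\mathrm{gen}}$ form a characteristic set $C_{\ell,k}$ of $\I_{\ell,k}(S)$ that is passive for the Janet division; hence their principal parts form, at every $\rho\in T$, a polynomial Janet basis of the principal symbol module $\M[\rho]$ with generators of degree at most $\ell$, so that exactly as in the proof of Theorem~\ref{thm:regdense} the Castelnuovo-Mumford regularity of $\M[\rho]$ is at most $\ell$ and the symbol $\Nc{\ell}{\rho}$ is therefore involutive (\cite[Thm.~5.4.12, Rem.~5.4.13, Rem.~6.1.23]{wms:invol}). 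Moreover $T$ consists of smooth points of $\J_{\ell,k}$ at which no initial or separant of $C_{\ell,k}$ vanishes, so by the proof of Proposition~\ref{prop:locint} a classical solution passes through every point of $T$, and hence $T$ defines a formally integrable differential equation. Being in addition of involutive symbol, $T$ is an involutive differential equation in the sense of the geometric theory, and \cite[Thm.~3]{wms:vessconn2} (equivalently \cite[Thm.~9.6.11]{wms:invol}) furnishes, in a metric neighbourhood of any $\rho\in T$, a flat Vessiot connection whose horizontal bundle is exactly such a distribution $\H$. Hence every $\rho\in T$ is a regular point of $\J_{\ell,k}$, which is the assertion. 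As an independent check, Proposition~\ref{prop:dim} forces $N_{\av}=0$ on $T$, confirming that the complement indeed has the correct dimension $n$ at every point.

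The main obstacle I anticipate is bookkeeping rather than a new idea: one must make precise that the equations of $S_{\ell,k}^{\mathrm{gen}}$, read as a differential system, genuinely constitute a simple differential system with respect to a Riquier ranking (combining the facts that the ordering on $\uv$ in Step~4 is orderly, that $C_{\ell,k}$ is passive for the Janet division by the proof of Theorem~\ref{thm:regde}, and \cite[Thm.~4.13]{eh:tri2}), and that the extra inequations introduced by the algebraic Thomas decomposition (initials, separants, and discriminants of $C_{\ell,k}$) keep $T$ inside the set $\Rc{\ell}$ on which the proof of Theorem~\ref{thm:regdense} operates---which holds because these inequations only shrink $T$ relative to $\Rc{\ell}$. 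Once the hypotheses are matched up in this way, every remaining step is a direct invocation of results established earlier in the paper.
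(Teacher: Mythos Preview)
Your proposal is correct and follows essentially the same approach as the paper: verify that $S_{\ell,k}^{\mathrm{gen}}$ is a simple differential system (passive for the Janet division, with no inequation leader a derivative of an equation leader), then invoke the involutivity argument from the proof of Theorem~\ref{thm:regdense} to obtain the required complement $\H$. The only cosmetic difference is that the paper inserts a short density argument---since both the regular points and $\Sol(S_{\ell,k}^{\mathrm{gen}})$ are Zariski dense in $\J_{\ell,k}$ they must intersect, whence Proposition~\ref{prop:dim} forces the correct Vessiot and symbol dimensions everywhere on the component---whereas you re-run the Castelnuovo--Mumford regularity computation directly; both routes lead to the same final appeal to \cite[Thm.~9.6.11]{wms:invol}.
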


\begin{proof}
  By the considerations in the proof of Theorem~\ref{thm:regde}, the
  equations in $S_{\ell,k}^{\mathrm{gen}}$ are passive for the Janet
  division.  Since $S_{\ell,k}^{\mathrm{gen}}$ arises from an algebraic
  Thomas decomposition, it is a simple algebraic system.  No leader of an
  inequation is the derivative of the leader of an equation, as all
  (suitable, cf.\ Equation~\eqref{eq:Bl}) derivatives of the differential
  equations have been added as algebraic equations.  Hence,
  $S_{\ell,k}^{\mathrm{gen}}$ is also simple as a differential system.  It
  follows now from Theorem~\ref{thm:regdense} that the regular points form
  a Zariski dense subset of $\J_{\ell,k}$.  Since
  $\Sol{(S_{\ell,k}^{\mathrm{gen}})}$ is also Zariski dense in
  $\J_{\ell,k}$ by Proposition~\ref{prop:GenSimpSys}, it contains regular
  points.  By Proposition~\ref{prop:dim}, this means that at all of its points
  the Vessiot and symbol spaces have the right dimensions.  Furthermore, we
  have seen above that at the points in $\Sol{(S_{\ell,k}^{\mathrm{gen}})}$
  no initial or separant vanishes.  Hence, we can conclude as in the proof
  of Theorem~\ref{thm:regdense} that $\Sol{(S_{\ell,k}^{\mathrm{gen}})}$ is
  actually an involutive differential equation and thus that around each
  point the required involutive complement to the symbol spaces exists.
\end{proof}

\section{Examples}\label{sec:exmp}

\begin{example}\label{exmp:finite_pde_saturation}
  We continue Example~\ref{ex:sat}. There it was already mentioned that a
  differential Thomas decomposition of the differential system defined by
  the partial differential equations $p_1=0$ and $p_2=0$ with $p_1$ and
  $p_2$ given by \eqref{eq:satex} yields only one simple differential
  system comprising besides the two given equations the inequation
  $\spt{(p_1)}=u\neq0$. Now we want to apply
  Algorithm~\ref{alg:simpleregularitydecomposition} for the determination
  of the geometric singularities of this simple differential system in
  order $\ell=1$, or more precisely a regularity decomposition of
  $\Jc{1}(S)$. All different types of singularities introduced in
  Definition~\ref{def:regsing} appear in this example.

  The first step of Algorithm~\ref{alg:simpleregularitydecomposition}
  requires the saturation already discussed in Example~\ref{ex:sat} which
  leads to the addition of a third generator $p_3$ given by \eqref{eq:satp3}.
  The algebraic ideal $\I_1(S)$ generated by these three generators is prime.
  second step. It was already mentioned above that $p_1$ is a linear
  combination of $p_2$ and $p_3$ and thus can in principle be omitted.  As
  the equations $p_2=0$ and $p_3=0$ can be solved for $u$ and $y$,
  respectively, the variety $\J_1(S)$ is a graph and thus no algebraic
  singularities occur here. Therefore we ignore in the sequel the
  equations $\mathbf{J}(p_k)=0$. In general, such a redundancy is not easy
  to recognise and therefore we do not exploit it any more in the
  following computations. The linear part of the system
  \eqref{eq:determVessiot} defining the Vessiot spaces takes here the form
  \begin{align*}
    \begin{pmatrix}
      u & 0 & u_x(u_x-y) & -2y-u+u_y(u_x-y) \\
      0 & y & -u_x & 0 \\
      u_y & u_x & -u_x & -1-u_y
    \end{pmatrix}
    \cdot
    \begin{pmatrix}
      b_{10}\\b_{01}\\a^1\\a^2
    \end{pmatrix}
    =0\,.
  \end{align*}
  The nonlinear part is given by $p_1=p_2=p_3=0$. The algebraic Thomas
  decomposition of this system performed in Step~4 yields
  after the projection in the last step the following four systems
  \begin{align*}
    S_1 &:= \bigl\{p_1 = 0, p_2 = 0, u\not=0, y\not=0\bigr\}\,,\\
    S_2 &:= \bigl\{u_x = 0, u_y \not= 0, u = 0, y = 0\bigr\}\,,\\
    S_3 &:= \bigl\{u_x \not= 0, u_y = 0, u = 0, y = 0\bigr\}\,,\\
    S_4 &:= \bigl\{u_x = 0, u_y = 0, u = 0, y = 0\bigr\}\,.
  \end{align*}
  We now show that the corresponding algebraic jet sets $\Jc{1}(S_i)$ are
  all regular and thus define a regularity decomposition of our system in
  order $1$. Obviously, $\Jc{1}(S_1)$ is a Zariski open subset of a
  three-dimensional variety in $\je{1}$. $\Jc{1}(S_2)$ and $\Jc{1}(S_3)$
  are disjoint Zariski open subsets of two-dimensional varieties lying in
  the Zariski closure of $\Jc{1}(S_1)$. Finally, $\Jc{1}(S_4)$ is a curve
  lying in the intersection of the Zariski closures of all the other
  systems. Of the four jet sets, only $\Jc{1}(S_1)$ is an algebraic
  differential equation, as for the other three systems the projections
  $\pi^1\bigl(\Jc{1}(S_i)\bigr)$ violate the closure condition of
  Definition~\ref{def:ade} because of the equation $y=0$.

  We finally discuss the Vessiot spaces for the points on these algebraic
  jet sets so that we can classify them according to the taxonomy of
  Definition~\ref{def:regsing}.  The Vessiot spaces are determined by the
  solutions of those (homogeneous linear) equations in the algebraic
  systems obtained after Step~4 that depend on $\mathbf{a}$ and
  $\mathbf{b}$. We describe them in terms of their coefficient matrices.
  For points on $\Jc{1}(S_1)$ we have the matrix
  \begin{align*}
    \begin{pmatrix}
      u^3 & 0 & y^3(u+y) & -u^2(y+u) \\
      0 & u & -u-y & 0
    \end{pmatrix}\,.
  \end{align*}
  The two corresponding equations express $b_{10}$ and $b_{01}$ in terms of
  the unconstrained variables $a^1$ and $a^2$.  Thus all Vessiot spaces are
  two-dimensional and all symbol spaces vanish so that the Vessiot spaces
  are transversal. Hence, all the points on the algebraic jet set
  $\Jc{1}(S_1)$ are regular points of the differential equation
  $\Jc{1}(S)$.  By Theorem~\ref{thm:regde}, $\Jc{1}(S_1)$ is a regular
  differential equation, as obviously $S_{1}$ is the generic branch in the
  algebraic Thomas decomposition.  Thus our findings are consistent with
  Corollary~\ref{cor:regde}.

  An analogous comparison of the dimensions of Vessiot and symbol spaces,
  respectively, determines the singular character of the points on
  $\Jc{1}(S_2)$, $\Jc{1}(S_3)$ and $\Jc{1}(S_4)$.  The respective Vessiot
  spaces are the kernels of the three matrices:
  \begin{align*}
    \begin{pmatrix}
      u_y & 0 & 0 & -1-u_y
    \end{pmatrix}\,,\qquad
    \begin{pmatrix}
      0 & u_x & 0 & -1 \\
      0 & 0 & 1 & 0
    \end{pmatrix}\,,\qquad
    \begin{pmatrix}
      0 & 0 & 0 & 1
    \end{pmatrix}\,.
  \end{align*}
  All points on $\Jc{1}(S_2)$ are purely irregular singular, as their
  Vessiot spaces are three-dimensional, but still contain a two-dimensional
  transversal part.  At points on $\Jc{1}(S_3)$, the dimension of the
  Vessiot spaces is still two; however, the transversal part is only
  one-dimensional.  Hence, they are regular singular.  Finally, the Vessiot
  spaces at points on $\Jc{1}(S_4)$ are three-dimensional with only
  one-dimensional transversal complements to the symbol spaces.  Thus,
  these points are irregular singular.  These considerations also prove
  that all sets $\Jc{1}(S_i)$ are regular algebraic jet sets and hence the
  four sets together define a regularity decomposition of $\Jc{1}(S)$ in
  order $1$.
\end{example}

\begin{example}\label{ex:hypgather}
  The hyperbolic gather is a classical example from catastrophe theory and
  is defined by the differential polynomial $p:=(u')^{3}+uu'-x$.  A real
  picture corresponding algebraic differential equation is given by the
  blue surface on the left in Figure~\ref{fig:hypgathintro} with its fold
  line shown in red.  All singularities lie on this fold line.  On the
  right, Figure~\ref{fig:hypgathintro} shows some (real) solution graphs
  and one can see how solutions reach a forward or backward impasse when
  they hit the projection of the fold line shown in black.

  \begin{figure}
    \centering
    \includegraphics[height=4.5cm]{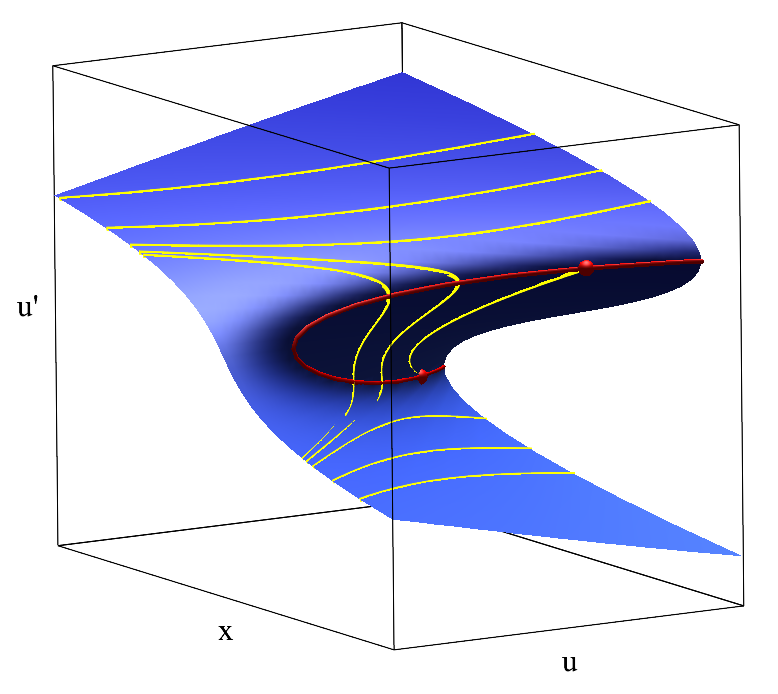}\quad
    \includegraphics[height=4.5cm]{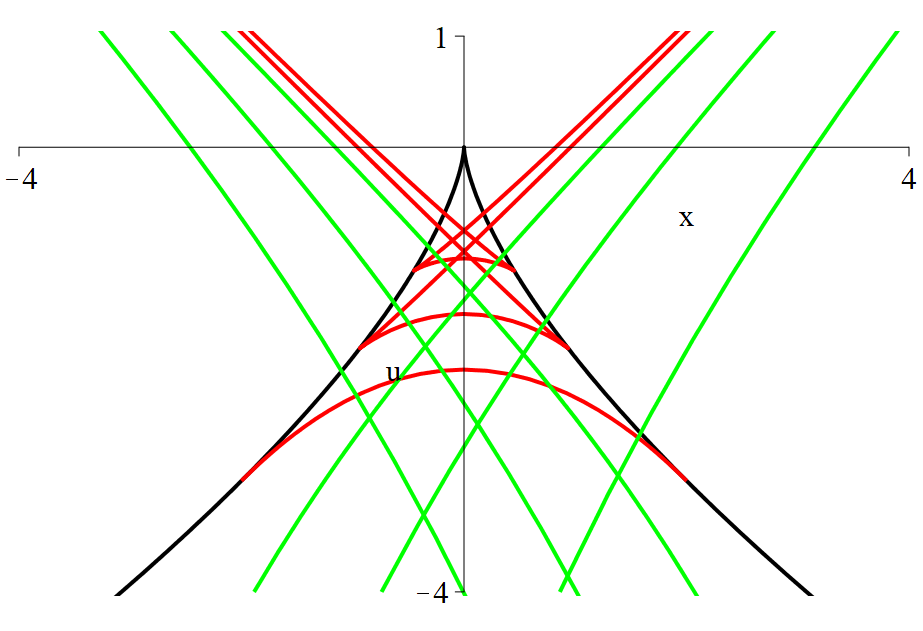}
    \caption{Hyperbolic gather. Left: Surface with singularities in jet
      space.  Right: solution graphs---note how the red curves ``go
      backwards'' after meeting the black curve, a generic behaviour at
      regular singularities.}
    \label{fig:hypgathintro}
  \end{figure}

  The hyperbolic gather represents probably one of the simplest examples to
  demonstrate the artifacts that the algebraic Thomas decomposition may
  introduce in the output of
  Algorithm~\ref{alg:simpleregularitydecomposition} (compare
  Remark~\ref{rem:algothomas}).  Using the implementation presented in
  \cite{bglr:thomasalg}, one obtains a regularity decomposition consisting
  of seven components (all composed of smooth points).  One of them
  consists of the two irregular singularities shown as distinguished points
  on the fold line in Figure~\ref{fig:hypgathintro}; three other components
  describe the remainder of the fold line (one of them singles out the
  ``tip'' of the fold line, one contains only complex points not visible in
  the real picture).

  The remaining three components contain the regular points.  The
  corresponding extended algebraic systems are given by
  \begin{align*}
    S_{1}^{\mathrm{ex}} &:= \bigl\{p=0,\ 4u^{3}+27x^{2}\neq0,\ x\neq0,\
            (3(u')^{2}+u)b+((u')^{2}-1)a=0\bigr\}\,, \\
    S_{2}^{\mathrm{ex}} &:= \bigl\{(u')^{3}+uu'=0,\ x=0,\ u\neq0,\
            (3(u')^{2}+u)b+((u')^{2}-1)a=0\bigr\}\,, \\
    S_{3}^{\mathrm{ex}} &:= \bigl\{uu'+3x=0,\ 4u^{3}+27x^{2}=0,\ x\neq0,\
            81x^{2}b+(36x^{2}-4u^{2})a=0\bigr\}\,,
  \end{align*}
  where we omitted the equations corresponding to the Jacobian criterion.
  Here $S_{1}$ is obviously the unique distinguished system of
  Proposition~\ref{prop:GenSimpSys} defining a regular differential
  equation.  However, if one takes the respective equations for the Vessiot
  space into account, then one sees\footnote{In the case of $S_{3}$, this
    requires that one takes the coefficients as they appear in $S_{1}$ and
    $S_{2}$ and rewrites them modulo the equations in $S_{3}$.} that
  the distinction between the three systems has no meaning for our analysis
  of singularities.

  \begin{figure}
    \centering
    \includegraphics[height=5cm]{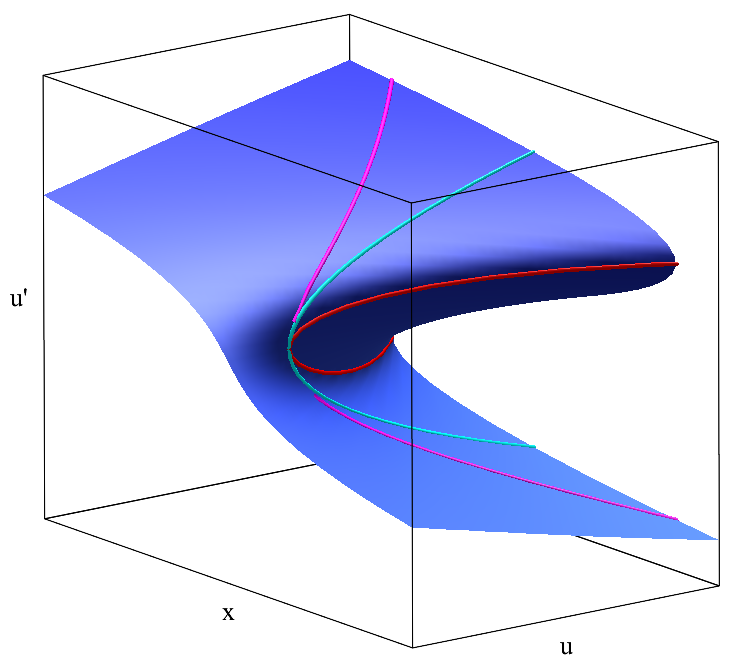}
    \caption{Hyperbolic gather with redundant case distinctions.}
    \label{fig:hypgather}
  \end{figure}

  The appearance of these unnecessary case distinctions can be easily
  explained from the geometry of the corresponding algebraic jet set
  $\J_{1}$ shown once more in Figure~\ref{fig:hypgather} over the real
  numbers.  Again, the red curve shows all geometric singularities of
  $\J_{1}$.  The set $\Sol{S_{3}}$ consists of those points of $\J_{1}$
  which lie either above or below the fold line and is shown in magenta.
  This set must be singled out by any algebraic Thomas decomposition for
  the ordering $u'>u>x$, as at its projection to the $x$-$u$ plane the
  fibre cardinality changes (this statement remains true over the complex
  numbers, as the hyperbolic gather simply depicts the solutions of the
  reduced cubic equation in $u'$ with coefficients $u$ and $-x$).  Finally,
  the set $\Sol{S_{2}}$ -- shown in cyan -- contains those points where the
  discriminant of the discriminant of $p$, i.\,e.\ $x$, vanishes.  This set
  has a geometric relevance only at its intersection with the fold line, as
  it singles out the point where the fold line itself folds (respectively
  where the underlying cubic equation has a triple zero).  Because of the
  inner working of the algorithm used to compute an algebraic Thomas
  decomposition, this condition leads to a separate case.
\end{example}

\begin{example}\label{ex:sphereplane}
  We consider now a situation where one is dealing with a reducible variety
  so that the second step of
  Algorithm~\ref{alg:simpleregularitydecomposition} becomes
  non-trivial. Its treatment demonstrates why we prefer to consider
  only irreducible varieties. The starting point is the differential system
  consisting of only one equation in factored form,
  \begin{equation}
    p:=(u'-c)\bigl((u')^2+u^2+x^2-1\bigr)=0\,,
  \end{equation}
  where $c\in[-1,1]$ is a real constant and no inequation. A differential
  Thomas decomposition yields a single simple differential system $S$ which
  contains besides the equation $p=0$ only the inequation $\spt{(p)}\neq0$.

  Algorithm~\ref{alg:simpleregularitydecomposition} (for $\ell=1$) computes
  in the first step the algebraic ideal $\I_1(S)$ which is here simply
  generated by $p$, as the saturation has no effect. Its prime
  decomposition yields two prime ideals generated by the two factors of
  $p$: $p_1=u'-c$ and $p_2=(u')^2+u^2+x^2-1$. Considered over the reals, we
  are dealing here with a sphere and a horizontal plane intersecting
  it. Obviously, both irreducible varieties are without algebraic
  singularities so that we ignore in the sequel the equations
  $\JJ(p_k)=0$. It is trivial to see that a regularity decomposition
  of $\Jc{1}(p_1)$ yields only one regularity component, namely
  $\Jc{1}(p_1)$ itself, and all points on it are regular.  In particular,
  $\Jc{1}(p_1)$ is trivially a regular differential equation.

  The second irreducible component was already analysed in
  \cite[Ex.~9.1.12]{wms:invol}. The linear equation for the Vessiot spaces
  is $2 u' b + (2 u u' + 2 x) a = 0$. For the ranking $b > a > u' > u > x$,
  the implementation presented in \cite{bglr:thomasalg} determines an
  algebraic Thomas decomposition consisting of five simple algebraic
  systems (since no algebraic singularities exist on this component, we
  ignore again the part stemming from the Jacobian criterion):
\[
\begin{array}{cc}
   S_1^{\mathrm{ex}}\colon
\left\{
\begin{array}{rcl}
u'b+(uu'+x)a & = & 0,\\[0.2em]
(u')^2+u^2+x^2-1 & = & 0,\\[0.2em]
u^2+x^2-1 & \neq & 0,\\[0.2em]
x^2-1 & \neq & 0
\end{array}\right.\qquad  & 
S_2^{\mathrm{ex}}\colon
\left\{
\begin{array}{rcl}
u'b+(uu'+x)a & = & 0,\\[0.2em]
(u')^2+u^2 & = & 0,\\[0.2em]
u & \neq & 0,\\[0.2em]
x^2-1 & = & 0
\end{array}\right.
\end{array}
\]
\[
\begin{array}{ccc}
S_3^{\mathrm{ex}}\colon
\left\{
\begin{array}{rcl}
a & = & 0,\\[0.2em]
u' & = & 0,\\[0.2em]
u^2+x^2-1 & = & 0,\\[0.2em]
x^3-x & \neq & 0
\end{array}\right. & 
S_4^{\mathrm{ex}}\colon
\left\{
\begin{array}{rcl}
a & = & 0,\\[0.2em]
u' & = & 0,\\[0.2em]
u & = & 0,\\[0.2em]
x^2-1 & = & 0
\end{array}\right.  & 
S_5^{\mathrm{ex}}\colon
\left\{
\begin{array}{rcl}
u' & = & 0,\\[0.2em]
u^2-1 & = & 0,\\[0.2em]
x & = & 0
\end{array}\right.
\end{array}
\]

The reduced systems corresponding to the first two systems
$S_1^{\mathrm{ex}}$ and $S_2^{\mathrm{ex}}$ can be combined into one simple
algebraic system leading to the following subset of the differential
equation  $\Jc{1}(p_2)$:
\begin{equation}\label{eq:r1}
  \Rc{1} = {\rm Sol}(S_1) \cup {\rm Sol}(S_2) =
  {\rm Sol}\Bigl(\bigl\{ p_2=0,\ u' \neq 0 \bigr\}\Bigr)\,.    
\end{equation}
Such a combination is also possible for the third and the fourth system and
yields another subset of $\Jc{1}(p_2)$ disjoint of $\Rc{1}$:
\begin{equation}\label{eq:r2}
  \Rc{2} = {\rm Sol}(S_3) \cup {\rm Sol}(S_4) =
  {\rm Sol}\Bigl(\bigl\{ p_2=0,\ u' = 0, u^2 - 1 \neq 0,\ x \neq 0 \bigr\}\Bigr)\,.
\end{equation}
We have thus constructed a regularity decomposition of $\Jc{1}(p_2)$ with
three regularity components: $\Rc{1}$ and $\Rc{2}$ as defined above and
$\Rc{3}={\rm Sol}(S_5)$.

We now classify the points on these three components according to the
taxonomy of Definition~\ref{def:regsing}. By Proposition~\ref{prop:dim}, we
have $\dim{\V_{\rho}[\Jc{1}(p_2)]} = 1$ for all points $\rho \in
\Rc{1}$. Moreover, for these points we have $u' \neq 0$. Since $u'$ is the
initial of the equation with leader $b$, the assumption $a = 0$ implies
$b = 0$ and hence the symbol space $\N_\rho[\Jc{1}(p_2)]$ is trivial. We
conclude that all points in $\Rc{1}$ are regular.  It follows again from
Proposition~\ref{prop:dim} that $\dim{\V_{\rho}[\Jc{1}(p_2)]} = 1$ also for
all points $\rho \in \Rc{2}$. Since the condition $a=0$ belongs to the
equations describing $\Rc{2}$, all these Vessiot spaces are vertical,
i.\,e.\ $\V_{\rho}[\Jc{1}(p_2)]=\N_\rho[\Jc{1}(p_2)]$ everywhere on
$\Rc{2}$. Thus all these points are regular singular. As the system $S_5$
defining $\Rc{3}$ contains no equations depending on $a$ or $b$, everywhere
on $\Rc{3}$ the Vessiot spaces are two-dimensional and hence all points
there are irregular singular.

In this example, it is not difficult to verify that $\Rc{1}$ is a regular
differential equation, although Theorem~\ref{thm:regde} guarantees this
only for the dense subset $\Sol{S_{1}}$. The inequation $x^{2}-1\neq0$ is
irrelevant for the initials and separants of $S_{1}$ and the systems
$S_{1}^{\mathrm{ex}}$ and $S_{2}^{\mathrm{ex}}$ contain exactly the same
equation for the coefficients $a$ and $b$ in our ansatz for the Vessiot
space.

We can now compare the results for $\Jc{1}(p_1)$ and $\Jc{1}(p_2)$ for the
points on their intersection, i.\,e.\ at points which are algebraic
singularities of the original reducible variety $\Jc{1}(S)$. If $c\neq0$,
then the points on $\Jc{1}(p_1)\cap\Jc{1}(p_2)$ have been classified as
regular for both irreducible components. However, for $c=0$ the points on
the intersection are still regular with respect to $\Jc{1}(p_1)$, but
regular singular with respect to $\Jc{1}(p_2)$. This exemplifies again the
statement made in the beginning of Example~\ref{exmp:clairaut}
that the taxonomy of Definition~\ref{def:regsing} is
relative and strongly depends on the considered algebraic jet set.

A natural question in such a situation is whether generalised solutions
exist which lie on both components. Let us assume for simplicity that
$c\neq0$.  Then on each component there exists a unique generalised
solution going through $\rho$.  Over the complex numbers, the identity
theorem for holomorphic functions excludes the possibility to combine
pieces of these to new solutions.  Over the real numbers, solutions of
lower regularity are admitted even if we restrict to classical solutions.
In our case, we can construct additional solutions through $\rho$ by
approaching $\rho$ on one of these two solutions and by then ``switching''
to the other one.  As the resulting curve in $J_{1}\pi$ is still
continuous, it corresponds to the prolongation of a function which is at
least $\mathcal{C}^{1}$ at the value~$x$ where the switching occurs.

As the direction of the tangent of a generalised solution encodes the value
of the second derivative, a necessary and sufficient condition for the thus
constructed solution to be even $\mathcal{C}^{2}$ at $x$ is that at the
intersection point the Vessiot spaces with respect to the two irreducible
components are identical. In our case, all Vessiot spaces
$\V_{\rho}[\Jc{1}(p_1)]$ are spanned by the vector
$\partial_x+c\partial_u$, whereas a basis of the Vessiot space
$\V_{\rho}[\Jc{1}(p_2)]$ at any point
$\rho=(\bar{x},\bar{u},\bar{p})\notin\Rc{3}$ is given by the vector
$\bar{p}(\partial_x+\bar{p}\partial_u)-(\bar{x}+\bar{u}\bar{p})\partial_p$. If
we assume that we are on the intersection, i.\,e.\ that $\bar{p}=c$ and
$\bar{u}^2+\bar{x}^2=1-c^2$, then it is easy to see that the Vessiot spaces
can be identical only for $c\neq0$ and then this happens only at the
two points
\begin{displaymath}
  \rho_{\pm}=\biggl( \mp c\sqrt{\frac{1-c^2}{1+c^2}},
  \pm \sqrt{\frac{1-c^2}{1+c^2}},c\biggr)\,.
\end{displaymath}
By analysing the next prolongation of our equation, it is not difficult to
show that the ``switching'' solutions are exactly $\mathcal{C}^{2}$, as the
value of the second derivative jumps at the switching point.

\begin{figure}
  \centering
  \includegraphics[height=5cm]{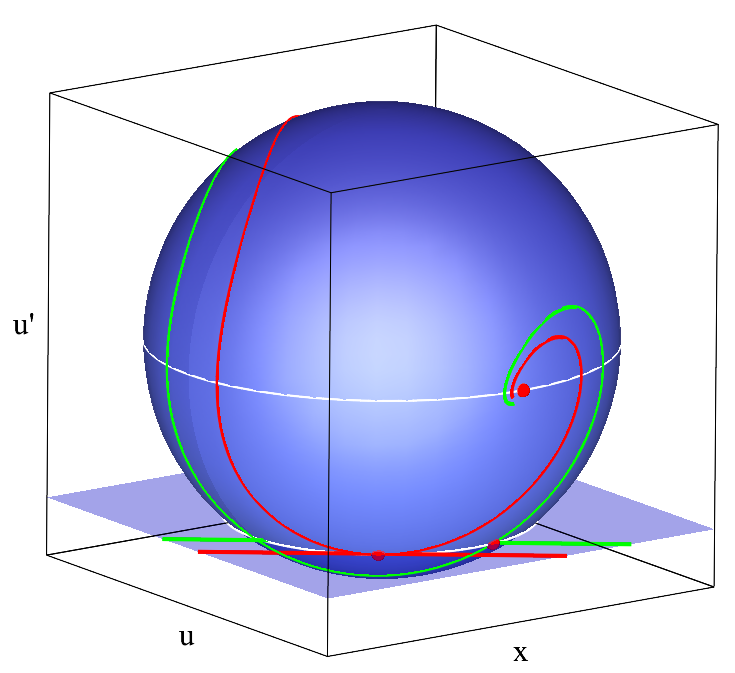}\quad
  \includegraphics[height=5cm]{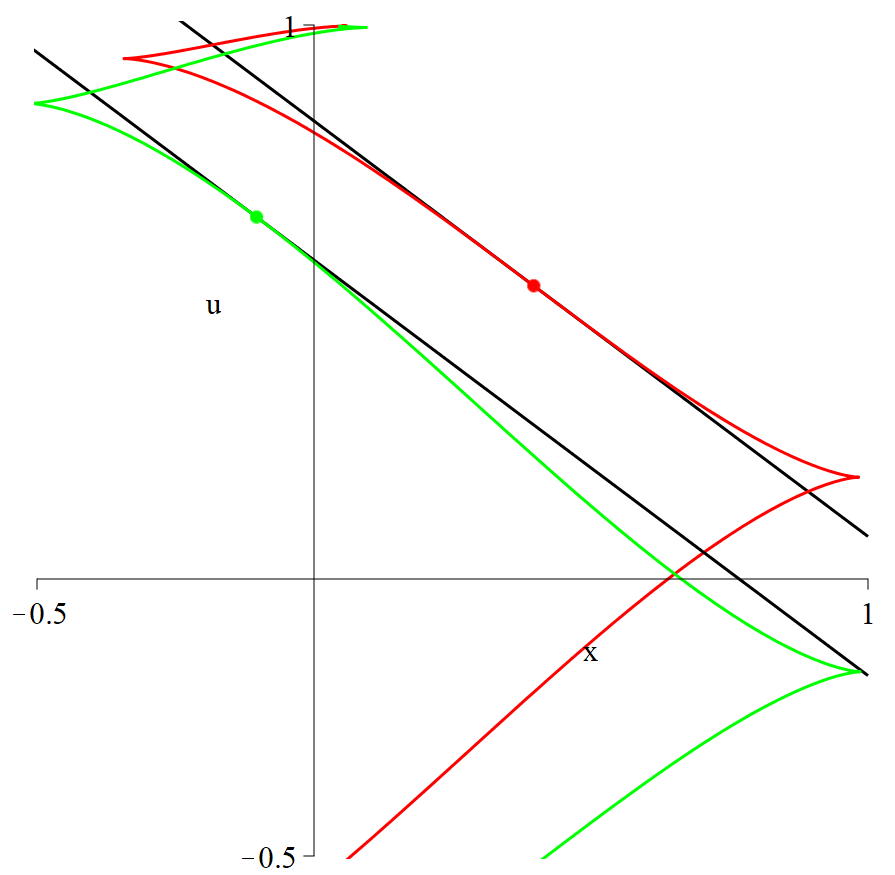}
  \caption{First-order differential equation with two irreducible
    components. Left: generalised solutions in $J_{1}\pi$. Right: 
     solution graphs in $x$-$u$ plane.}
  \label{fig:sphereplane}
\end{figure}

Thus we can conclude that over the real numbers we have through each point
on the intersection four solutions: two analytic functions with
prolongations staying completely on one component and two $\mathcal{C}^{1}$
functions switching between components.  For the points $\rho_{\pm}$ the
latter two solutions are even $\mathcal{C}^{2}$; a higher regularity is not
possible for ``switching'' solutions.  Figure~\ref{fig:sphereplane}
provides a graphical presentation of the situation over the reals for the
choice $c=-\tfrac{3}{4}$.  The red curves intersect at $\rho_{+}$; the
green curves at some point different from $\rho_{\pm}$.  Geometrically,
$\rho_{\pm}$ are distinguished by the fact that the value of $u'$ at these
points represents a local extremum along the generalised solution of
$\mathcal{J}_{1}(p_{2})$ going through it.  This in turn means that the
graph of the corresponding classical solution has an inflection point
there.  This can be seen in the right part of the picture where the black
lines correspond to the solutions of $u'=c$ and the red and green curves to
solutions of $(u')^{2}+u^{2}+x^{2}=1$.  Obviously, the black lines are
tangent to the coloured curves at the marked intersection points. But the
red curve crosses the black line, whereas the green curve stays on one
side.
\end{example}

\begin{example}\label{ex:cone}
  To conclude this section, we study equations with ``intrinsic'' algebraic
  singularities, i.\,e.\ singularities that are not solely due to the
  intersection of irreducible components.  Some classical examples can
  already be found in the work of Ritt.  He studied for instance the
  equation $(u')^{2}-4u^{3}=0$ \cite[II.\textsection19]{ritt:da}. Here, all
  points $(x,0,0)$ are algebraic singularities, whereas all other points on
  the corresponding algebraic differential equation $\J_{1}$ are regular.
  As the differential Thomas decomposition applied in the first step of
  Algorithm~\ref{alg:regularitydecomposition} shows, a singular integral,
  namely the solution $u(x)=0$, exists here besides the generic component.
  Obviously, our algebraic singularities just form the graph of the first
  prolongation of this solution.  When we apply
  Algorithm~\ref{alg:simpleregularitydecomposition} to the generic
  component, then it uses the inequations in the entered differential
  system only for the saturation; otherwise they are ignored.  Hence the
  analysed algebraic differential equation $\J_{1}$ is the full variety
  corresponding to the given equation.  In particular, $\J_{1}$ contains
  all the algebraic singularities, but
  Algorithm~\ref{alg:simpleregularitydecomposition} recovers them and puts
  them again into a separate regularity component.  The singular integral
  represents here a kind of limit towards which all the other solutions
  tend asymptotically.

  As a second example, we consider the cone in the first-order jet bundle,
  i.\,e.\ we study the scalar differential equation $\J_{1}$ given by
  \begin{displaymath}
    (u')^2 - u^2 - x^2 = 0
  \end{displaymath}
  which obviously possesses an isolated algebraic singularity at the
  origin.  The regularity decomposition of $\J_{1}$ determined with our
  algorithm yields two components: one consisting solely of this algebraic
  singularity and one containing all other points which are regular.

  \begin{figure}
    \centering
    \includegraphics[height=5cm]{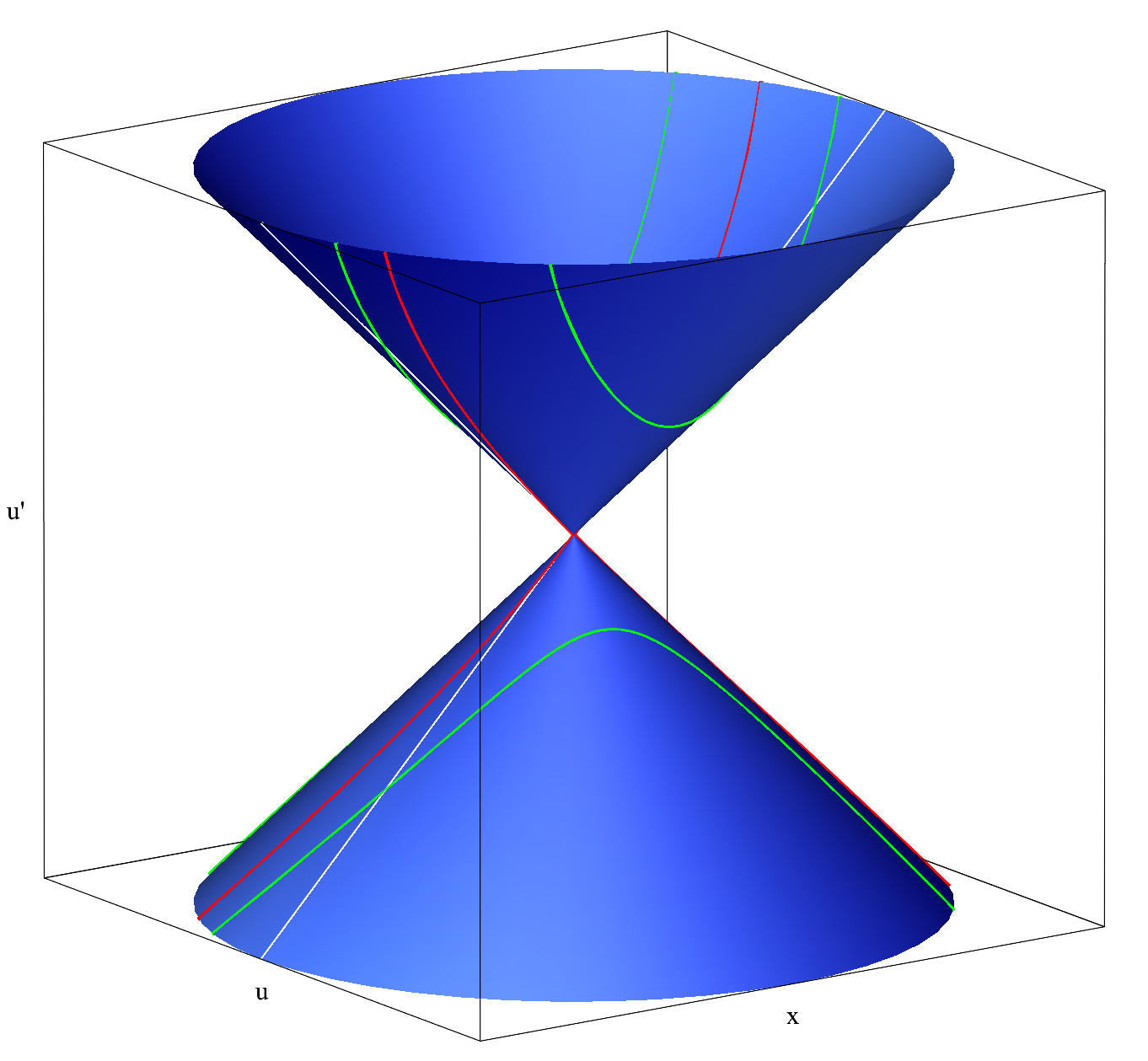}\quad
    \includegraphics[height=5cm]{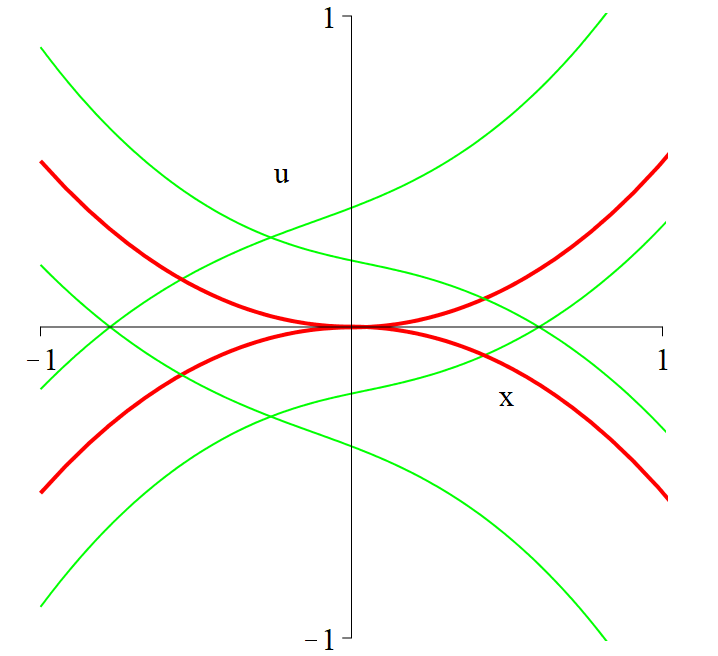}
    \caption{Generalised solutions going through an algebraic singularity
      of a real first-order differential equation. Left: situation in
      $J_{1}\pi$. Right: projection to $x$-$u$ plane.}
    \label{fig:cone}
  \end{figure}

  It is of obvious interest to study the local solution behaviour around
  this algebraic singularity and again we find a much wider range of
  possibilities over the real numbers.  In our case, a real analysis can be
  performed with a simple ad hoc approach.  Around any regular point
  $(\bar{x},\bar{u},\bar{u}')\in\J_{1}$, the Vessiot distribution is
  generated by the vector field
  $X=u'\partial_{x}+(x^{2}+u^{2})\partial_{u}+(x-uu')\partial_{u'}$. Note
  that $X$ vanishes when one approaches the origin.  By restricting to
  either the lower or the upper half cone, we can express $u'$ by $x$ and
  $u$ and project to the $x$-$u$ plane obtaining the vector field
  $Y=\pm\sqrt{x^{2}+u^{2}}\partial_{x}+(x^{2}+u^{2})\partial_{u}$.  It can
  trivially be continued to the origin where it vanishes.  However, it is
  not differentiable at this point.  Therefore, its behaviour at this
  stationary point cannot be decided using the Jacobian
  matrix. Transforming to polar coordinates (i.\,e.\ performing a blow up)
  shows that there is a unique invariant manifold going through the
  algebraic singularity which corresponds to the graph of a (prolonged)
  solution.  We obtain one such solution from each half cone (see the red
  curves in Figure~\ref{fig:cone}).  As the graphs of both solutions possess
  a horizontal tangent at the origin, it is possible to ``switch'' at the
  singularity from one to the other.  Hence, we find that our equation
  possesses exactly four $\mathcal{C}^{1}$ solutions for the initial
  condition $u(0)=0$ and $u'(0)=0$.  By analysing the prolongations of our
  equation, it is not difficult to verify that the solutions that stay
  inside of one half cone are even smooth, whereas the ``switching''
  solutions are only $\mathcal{C}^{1}$, as their second derivative jumps
  from $1$ to $-1$ or vice versa at $x=0$.  Figure~\ref{fig:cone} also shows
  in white the Vessiot cone at the algebraic singularity which consists of
  two intersecting lines.  One sees that they are indeed the tangents to
  the prolonged solutions through the singularity.
\end{example}

\section{Conclusions}\label{sec:conclusions}

We developed a framework for the detection of all singularities of an
arbitrary differential system with polynomial non-linearities at a fixed
order.  It is based on the notion of an algebraic jet set
(Definition~\ref{def:ade}) and covers both ordinary and partial differential
equations.  Our framework merges concepts from differential topology with
tools from differential algebra and algebraic geometry.  In particular for
partial differential equations, it provides the first general and rigorous
definition of singularities.  While we could not prove that the taxonomy of
Definition~\ref{def:regsing} is complete for systems which are not of
finite type, our first main result, Theorem~\ref{thm:regdense}, shows that
the definition is meaningful in the basic sense that regular points
represent the generic case.

We augmented the classical theory of singularities of differential
equations by the novel notion of a regularity decomposition
(Definition~\ref{def:regdecomp}), which is based on the concept of a regular
algebraic jet set and in particular allows for a rigorous handling of
situations where singularities are not isolated.  A regularity
decomposition essentially decomposes an algebraic jet set into subsets on
which all relevant geometric structures show a uniform behaviour.  Our
second main result, Theorem~\ref{thm:algregdecomp}, provides an algorithmic
proof for the existence of regularity decompositions for arbitrary simple
differential systems.

Finally, we solved a long standing problem in the geometric theory of
differential equations: the construction of effectively provably regular
equations.  Most results in the geometric theory assume that one is dealing
with a regular differential equation.  However, to the best of our
knowledge, nobody has so far provided an effective criterion for checking
whether or not a given differential equation is regular.  The basic problem
is that such a criterion must take into account all infinitely many
prolongations of the considered differential equation.  Our third main
result, Theorem~\ref{thm:regde}, shows that the regularity decomposition
determined by our algorithm contains in each prime component of the given
system a unique regularity component which defines a regular differential
equation.

Our approach is based on both the differential and the algebraic Thomas
decomposition and therefore fully algorithmic.  An algebraic Thomas
decomposition is crucial for the detection of all singularities.  However,
as discussed in Example~\ref{ex:hypgather}, such a decomposition yields in
general more than we really need, as it also takes into account the
geometry of the embedding of the given algebraic differential equation into
the ambient jet bundle.  From a theoretical point of view, these
unnecessary case distinctions are ugly but harmless.  From a computational
point of view, they considerably increase the computational costs and thus
it would be useful to find a way to avoid them.  Based on the existing
implementation of these decompositions in \textsc{Maple}
\cite{bglr:thomasalg} and the built-in \textsc{Maple} procedure for prime
decomposition, it is straightforward to implement our
Algorithms~\ref{alg:simpleregularitydecomposition} and
\ref{alg:regularitydecomposition} for constructing a regularity
decomposition in \textsc{Maple}.  Indeed, one of the authors (MLH) provided
such an implementation and all examples in this work have been computed
with it.

Our results lead immediately to a number of new questions.  The most
obvious one concerns the \emph{local solution behaviour} around
singularities, in particular the existence of solutions connecting two or
more regularity components.  Its investigation requires first an analysis
of the ``neighbourhood relationships'' of the found components, i.\,e.\
does a certain component lie in the Zariski closure of another component?
Such information can be straightforwardly obtained by classical Gr\"obner
bases techniques (cf.\ e.\,g.\ \cite{clo:iva}).  A deeper study of the local
solution behaviour requires additional methods which are beyond the
scope of this work.  Furthermore, such a study can most probably not be
done at the same level of generality as this work; one has to
specialise to more specific classes of systems.

For geometric singularities of ordinary differential equations considered
\emph{over the real numbers} much is already known from the works in the
context of differential topology cited in the Introduction.  Typical
questions here are existence, (non)uniqueness and regularity of one- and
two-sided solutions.  At regular singularities the situation is fairly
simple: they are generically either the initial or the terminal point of
two classical solutions (thus generically only one-sided solutions exist at
such points).  A precise formulation covering also non-generic situations
can e.\,g.\ be found in \cite[Thm.~4.1]{wms:aims}.  For the analysis of
irregular singularities, one can employ dynamical systems theory, as
usually the Vessiot distribution is generated outside of an irregular
singularity by a vector field which vanishes at the singularity.
Generalised solutions through the singularity can then be constructed as
one-dimensional invariant manifolds and typically several (possibly even
infinitely many) solutions intersect at such a singularity.\footnote{In
  low-dimensional situation, it is useful to be able to actually see the
  singularities and solutions through and around them.  In
  \cite{bss:numvis}, a \textsc{Matlab} toolbox for producing corresponding
  2D and 3D plots is presented.}  A detailed analysis of a specific class
of scalar quasilinear\footnote{It should be noted that quasilinear
  differential equations possess a special geometry, as here the Vessiot
  distribution is projectable \cite{wms:singbif} leading to phenomena not
  arising in the fully nonlinear equations usually studied in differential
  topology.  Using classical analytical techniques, such equations have
  been analysed in some detail e.\,g.\ in \cite{rr:dae}.} second-order
ordinary differential equations along these lines can be found in
\cite{wms:quasilin}.  In particular, it is shown there how regularity
questions can be answered geometrically by studying prolongations.

For linear ordinary differential equations, the analysis of singularities
\emph{over the complex numbers} has a long tradition going back at least to
the classical works of Fuchs and Frobenius which is nowadays often
considered as part of differential Galois theory (cf.\ \cite{ps:galois} and
references therein).  Note that in this context the terminology regular and
irregular singularity is often used with a different meaning than in this
work.  In a complex setting, the regularity of solutions is of course no
issue.  Instead one studies questions like the monodromy of multivalued
solutions or the Stokes phenomenon (cf.\ e.\,g.\ \cite{hz:monogr} or
\cite{ww:aeode}) which are both from a theoretical and an algorithmic point
of view still far from being solved.

We mentioned already in Remark~\ref{rem:regular} that for partial
differential equations the taxonomy of Definition~\ref{def:regsing} might
be incomplete.  The deeper problem behind this question is to define
precisely what in this case the \emph{regular behaviour} should be.  For
equations of finite type, the prolonged solutions lead to a foliation of
the differential equation around any regular point, as in this case the
vanishing of the symbol space implies that the Vessiot distribution itself
is the unique complement to the symbol space and its integral manifolds
form the leaves of a (unique) foliation by the Frobenius theorem.  If the
differential equation is not of finite type, infinitely many possible
complements exist and each of them leads to a different foliation by its
integral manifolds.  Here it is still unclear whether our definition is
already sufficient to avoid any possible kind of singular behaviour.  For
regular differential equations, the different complements can be
constructed by solving a combined algebraic-differential system which
arises out of the structure equations of the Vessiot distribution (see the
discussion in \cite{wms:vessconn2}).  It has not yet been studied how this
construction is affected by singularities and whether further kinds of
singularities may be hidden in the structure equations.

The study of solutions around \emph{algebraic singularities} has not found
much attention yet.  Within differential topology, they simply do not
occur, as it is always assumed that one is dealing with a
manifold. Recently, Falkensteiner and Sendra \cite{fs:fpsaaode} used the
classical theory of algebraic curves to study formal power series solutions
of autonomous algebraic ordinary differential equations of first order by
relating them to places.  However, an extension of their approach to higher
dimensional situations appears to be highly nontrivial.  Our analysis in
the non-autonomous Example~\ref{ex:cone} corresponding to an algebraic
surface was performed in a rather ad hoc manner, but the principal idea
should be extendable to more complicated situations, as the definition of a
simple algebraic system means that each equation in the system is solvable
for its leader.  Thus one can at least in principle obtain an explicit
expression for a vector field generating the Vessiot distribution (for
ordinary differential equations), as we used it in the example.

Our approach studies the singularities in a fixed order $\ell$.  In this
work, we have only been concerned with choosing $\ell$ sufficiently high
for a meaningful analysis.  An obvious question is how regularity
decompositions in order $\ell$ and in order $\ell+1$ are related or, more
generally, the behaviour of singularities under \emph{prolongations}.  It
is related to classical decidability questions for power series solutions
as e.\,g.\ studied by Denef and Lipshitz \cite{dl:pss}.  It is easy to see
that at a regular singularity no power series solutions can exist, as the
fibre above it is always empty.  The fibre over an irregular singularity
consists entirely of singularities, but it is not clear of which type.  A
power series solution can exist at such a point only, if at each order of
prolongation the corresponding fibre contains at least one irregular
singularity.  Thus we meet again the problem of checking infinitely many
conditions.  To the best of our knowledge, it is still unknown whether one
can decide the existence of power series solutions for given initial data
with a finite algorithm.

The algorithms behind the algebraic and the differential Thomas
decomposition require that the base field is algebraically closed.  For
this reason, we considered in this work exclusively differential equations
over the complex numbers.  From an application point of view, it is of
great interest to have a similar theory as developed in this work for
\emph{real} differential equations.  A first step in this direction can be
found in \cite{sss:realsing} for ordinary differential equations.  There
the algebraic Thomas decomposition is replaced by a parametric Gaussian
algorithm followed by a quantifier elimination.  This process represents a
suitable alternative for the effective detection of real singularities and
as a by-product avoids to some extent the above mentioned problem that the
algebraic Thomas decomposition leads to unnecessary case distinctions
because of the geometry of the embedding of the differential equation.  As
demonstrated in \cite{sss:realsing}, an analysis of
Example~\ref{ex:hypgather} leads now to no redundant cases.

\appendix

\section{Algebraic Systems and the Algebraic Thomas Decomposition}
\label{sec:algThomas_appendix}

We fix a total ordering (or \emph{ranking}) on the variables of the
polynomial ring $\P=\CC[x^{1},\dots,x^{n}]$ by setting $x^{i}<x^{j}$ for
$i<j$.  The greatest variable with respect to $<$ appearing in a
non-constant polynomial $p\in\P$ is called the \emph{leader} of $p$ and
denoted by $\ld{(p)}$; for $p\in\CC$ we set $\ld{(p)}=1$.  We regard every
polynomial $p\in\P\setminus\CC$ as a univariate polynomial in the
indeterminate $x^{k}:=\ld{(p)}$.  Then the coefficients of $p$ as a
polynomial in $x^{k}$ are contained in $\CC[x^{i} \mid 1 \le i < k]$.  The
coefficient of the highest power of $\ld{(p)}$ in $p$ is called the
\emph{initial} of $p$ and denoted by $\init{(p)}$.  Finally, we introduce
the \emph{separant} of $p$ as $\spt{(p)}:=\partial p/\partial x^{k}$.

An \emph{algebraic system} $S$ is a finite set of polynomial equations and
inequations
\begin{displaymath}
S = \bigl\{\ p_1 = 0, \, \ldots, \, p_s = 0, \, 
q_1 \neq 0, \, \ldots, \, q_t \neq 0\ \bigr\}
\tag{A}\label{eq:algebraicsystem}
\end{displaymath}
with polynomials $p_i, q_j \in\P$ and $s,t \in \NN_{0}$.  Its
\emph{solution set} is defined as
\begin{displaymath}
\Sol{(S)}:=\bigl\{\ a \in\CC^n \mid p_i(a) = 0, \, q_j(a) \neq 0 
\mbox{ for all } i, j\ \bigr\}\;.
\end{displaymath}
Obviously, $\Sol{(S)}$ is a locally Zariski closed set, namely the
difference of the two varieties $\Sol{(\{p_{1}=0,\ldots,p_{s}=0\})}$ and
$\Sol{(\{q_{1}\cdots q_{t}=0\})}$.

\begin{definition}\label{def:algsimple}
  An algebraic system $S$ as in \eqref{eq:algebraicsystem} is said to be
  \emph{simple} (with respect to the ranking $<$), if the following three
  conditions hold:
  \begin{enumerate}[(i),nosep]
  \item All equations and inequations have pairwise different leaders,
    i.\,e.\ we have
    $\bigl|\,\{\, \ld(p_1), \ldots, \ld(p_s), \ld(q_1), \ldots, \ld(q_t)\,
    \}\setminus\{1\}\,\bigr| = s+t$
    (\emph{triangularity}).\label{def:algsimple:triangularity}
  \item For every $r \in \{ p_1, \ldots, p_s, q_1, \ldots, q_t \}$, the
    equation $\init{(r)} = 0$ has no solution in $\Sol{(S)}$
    (\emph{non-vanishing initials}).\label{def:algsimple:initial}
  \item For every $r \in \{ p_1, \ldots, p_s, q_1, \ldots, q_t \}$, the
    equation $\spt{(r)} = 0$ has no solution in $\Sol{(S)}$
    (\emph{square-freeness}).\label{def:algsimple:squarefree}
  \end{enumerate}
\end{definition}

We associate with the simple algebraic system $S$ the saturated ideal
\begin{equation}\label{eq:Ialg}
\I_{\mathrm{alg}}(S):=\langle p_1,\ldots,p_s\rangle:q^\infty\subset\P
\qquad\mbox{where\ }q=\init{(p_1)}\cdots\init{(p_s)}\,.
\end{equation}
According to \cite[Prop.~2.2.7]{dr:habil}, it represents the vanishing
ideal of the Zariski closure of $\Sol(S)$, i.\,e.\ the ideal of all
polynomials in $\P$ which vanish on $\Sol(S)$. In particular,
$\I_{\mathrm{alg}}(S)$ is always a radical ideal.

Simple systems are a special class of algebraic systems for which the
solution set can be obtained iteratively by finding zeros of univariate
polynomials. First observe that triangularity implies that the simple
system $S$ contains either at most one equation $p(x^1) = 0$ with leader
$x^1$ or at most one inequation $q(x^1) \neq 0$ with leader $x^1$. The
number of zeros of $p$ (of $q$) in $\CC$ is equal to the degree of $p$ (of
$q$, respectively) due to square-freeness. In the former case, any zero
$a^1 \in \CC$ of $p$ can be chosen for the coordinate $x^1$ of a solution
of $S$.  In the latter case, all elements of $\CC$ except the zeros of $q$
can be chosen instead. If $S$ does not contain any equation or inequation
with leader $x^1$, then $a^1$ is arbitrary.  We substitute $a^1$ for $x^1$
in the equation or inequation with leader $x^{2}$ in $S$ leading to a
univariate polynomial in $x^{2}$. The degree of this polynomial is
independent of the choice of $a_1$ due to the non-vanishing initial. Again
because of square-freeness, the number of zeros of this polynomial is equal
to its degree. By iterating this process, we obtain a solution
$(a^1, a^2, \ldots, a^n) \in \CC^n$ of $S$ and every solution of $S$ can be
obtained in this way. This process makes use of the fact that the
projections from the solution set of $S$ onto the subspace with coordinates
$x^1$, $x^{2}$, \ldots, $x^k$ have uniform fibre cardinality
\cite{wp:counting}.

\begin{definition}
  Let $S$ be an algebraic system as in (\ref{eq:algebraicsystem}).  A
  \emph{Thomas decomposition} of it consists of finitely many simple
  algebraic systems ${S_1, \ldots, S_k}$ such that the solution set
  $\Sol{(S)}$ is the disjoint union of $\Sol{(S_1)}, \ldots, \Sol{(S_k)}$.
\end{definition}

Thomas \cite{th:ds,th:sr} proved that any algebraic system admits a
(non-unique) Thomas decomposition.  Using subresultants and case
distinctions, it can be algorithmically determined \cite{bglr:thomasalg}.
An implementation in \textsc{Maple} is described in \cite{bl:thomasimpl}.

\section{Differential Systems and the Differential Thomas Decomposition}
\label{sec:diffideal}

We proceed to the \emph{differential polynomial ring} (see
\cite{ritt32,ritt:da,wys:rkdp} for more information on the for us relevant
parts of differential algebra).  Let $K=\CC(x^{1},\ldots,x^{n})$ be the
field of rational functions on $\CC^n$ and $\delta_i$ the derivation
$\partial/\partial x^{i}$.  Given a set of \emph{differential
  indeterminates} $U=\{ u^1, \ldots, u^m \}$, we define the ring of
differential polynomials as the polynomial ring
$K\{U\}:=K\bigl[\,u^\alpha_\mu \mid 1 \le \alpha \le m, \, \mu \in
\NN_{0}^n\,\bigr]$ in the infinitely many variables $u^\alpha_\mu$.  The
derivations $\delta_i: K \to K$ extend to derivations
$\delta_i: K\{U\} \to K\{U\}$ via
$\delta_i(u^\alpha_\mu) := u^\alpha_{\mu+1_i}$, additivity, and the Leibniz
rule.  Here $1_i$ is the multi-index of length $n$ whose entries are $0$
except for the $i$-th entry which is $1$.  We define
$\delta^\mu := \delta_{1}^{\mu_1} \ldots \delta_{n}^{\mu_n}$ and
$|\mu| := \mu_1 + \ldots + \mu_n$, the length of any multi-index
$\mu \in \NN_{0}^n$.  Given differential polynomials
$p_{1},\ldots,p_{s}\in K\{U\}$, we distinguish between the algebraic ideal
$\langle p_{1},\ldots,p_{s}\rangle$ consisting of all linear combinations
of them and the differential ideal
$\langle p_{1},\ldots,p_{s}\rangle_{\Delta}$ containing in addition all
differential consequences $\delta^{\mu}p$ of any element $p$ of it.

We introduce the subring $\D\subset K\{U\}$ of those differential
polynomials where also the coefficients are polynomials in the variables
$x^{i}$.  Moreover, for any $\ell\in\NN_{0}$ we consider the subalgebra
\begin{displaymath}
\D_{\ell}=\CC\bigl[\,x^{i},\,u^\alpha_\mu \mid 
1\leq i\leq n, \, 1 \le \alpha \le m, \, 
|\mu|\leq\ell\,\bigr]
\end{displaymath}
which is the coordinate ring of the affine space $\AA_{\CC}^{d}$ where
$d=n+m\tbinom{n+\ell}{\ell}$.  Later, we identify the jet bundle
$\je{\ell}$ of the geometric theory (see Section~\ref{sec:geojet} below)
with the affine space $\AA_{\CC}^d$ and consider $\D_{\ell}$ as its
coordinate ring. Consequently, we call the variables $u^\alpha_\mu$ of the
polynomial ring $K\{U\}$ \emph{jet variables}.

A \emph{ranking} on the differential polynomial ring $K\{U\}$ is a total
ordering $<$ on the set of jet variables $u^\alpha_\mu$ such that
$u^\alpha < \delta_i u^\alpha$ for all $i$ and $\alpha$, and such that
$u^\alpha_\mu < u^{\alpha'}_{\mu'}$ implies
$\delta_i u^\alpha_\mu < \delta_i u^{\alpha'}_{\mu'}$ for all $i$,
$\alpha$, $\alpha'$, $\mu$, $\mu'$.  A ranking $<$ is \emph{orderly}, if
$|\mu_1|<|\mu_2|$ implies $u^{\alpha_1}_{\mu_1} < u^{\alpha_2}_{\mu_2}$ for
all $\alpha_1$, $\alpha_2$, $\mu_1$, $\mu_2$. A \emph{Riquier ranking}
satisfies the following property: if the relation
$u^\alpha_\mu<u^\alpha_{\mu'}$ holds for one value of the index $\alpha$,
then it must hold for all values of $\alpha$ (the meaning of this condition
is discussed in \cite[p.~428]{wms:invol}).  The
definitions of leader, initial and separant given above can be extended
straightforwardly.

A \emph{differential system} $S$ is given by a finite set of differential
polynomial equations and inequations
\begin{displaymath}
S = \bigl\{\, p_1 = 0, \, \ldots, \, p_s = 0, \, 
q_1 \neq 0, \, \ldots, \, q_t \neq 0 \,\bigr\}\tag{D}
\label{eq:differentialsystem}
\end{displaymath}
with $p_i,q_j \in K\{U\}$ and $s,t \in \NN_{0}$. Note that by clearing
denominators we may (and will) always assume that actually
$p_i,q_j \in \D$.

As always for differential equations, the issue arises what kind of
functions are permitted as solutions. We use here mainly local holomorphic
functions $f:\U\rightarrow\CC$ defined on some metric open domain
$\U\subseteq\CC^{n}$.  However, in our approach the actual nature of the
considered functions is not so important and we could equally well work
with formal power series or meromorphic functions.  In the sequel, we
simply assume that some set of functions admissible as solutions has been
fixed and we denote by $\Sol{(S)}$ the set of solutions in this set. We
further assume that a differential Nullstellensatz holds for this set.
This is needed to establish a one-to-one correspondence between the
solution sets of differential systems and the radical differential ideals
of the differential polynomial ring. For a system of differential equations
in $K \{ U \}$ with our choice of $K$, a differential Nullstellensatz holds
for local holomorphic functions (see e.\,g.\
\cite{raudenbush:nullstellen,ritt32}).

\begin{theorem}[Nullstellensatz for Holomorphic Functions]\label{thm:dnss}
  Let $p_1,\dots,p_s \in K\{ U \}$ be differential polynomials and
  $I=\langle p_1,\dots,p_s\rangle_{\Delta}$ the differential ideal
  generated by them.  Moreover, let $q \in K\{ U \}$ be a differential
  polynomial which vanishes for all local holomorphic solutions of
  $I$. Then some power of $q$ is an element of $I$.
\end{theorem}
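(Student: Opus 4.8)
The plan is to derive the statement from two classical pillars of differential algebra: the Ritt--Raudenbush basis theorem and the Riquier existence theorem (Theorem~\ref{thm:riq}). First I would reduce to the case of a prime differential ideal. Writing $I=\langle p_1,\dots,p_s\rangle_\Delta$ and $\sqrt I$ for the smallest radical differential ideal containing it, the Ritt--Raudenbush theorem (see \cite{ritt32,kol:diffalg}) yields a finite decomposition $\sqrt I=\mathfrak p_1\cap\cdots\cap\mathfrak p_r$ into prime differential ideals, each containing $I$. Since a power of $q$ lies in $I$ precisely when $q\in\sqrt I$, i.e.\ when $q\in\mathfrak p_j$ for every $j$, it suffices to fix one such prime $\mathfrak p$ and show $q\in\mathfrak p$. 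Because $\mathfrak p\supseteq I$, every local holomorphic solution of $\mathfrak p$ is a fortiori a solution of $I$, on which $q$ vanishes by hypothesis; so it is enough to exhibit one local holomorphic solution $\sigma=(\sigma^1,\dots,\sigma^m)$ of $\mathfrak p$ which is a \emph{generic zero}, meaning $f(\sigma)=0\Leftrightarrow f\in\mathfrak p$ for all $f\in K\{U\}$. Then $q(\sigma)=0$ forces $q\in\mathfrak p$, as desired.

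To construct $\sigma$ I would pick a Riquier ranking and a characteristic set $C=\{C_1,\dots,C_k\}$ of $\mathfrak p$, with $H$ the product of the initials and separants of the $C_i$, so that $\mathfrak p=\langle C\rangle_\Delta:H^\infty$, a standard fact about characteristic sets of prime differential ideals. The leaders of $C$ distinguish the principal jet variables from the parametric ones, and since $\mathfrak p$ is prime all of its integrability conditions already lie in $\mathfrak p$, so the system $C=0$ may be assumed passive (orthonomic) without acquiring new leaders. Riquier's Theorem~\ref{thm:riq} then applies: prescribing convergent power series for the parametric data at an expansion point off the zero locus of $H$ determines a unique formal power series solution which moreover converges and thus defines local holomorphic functions $\sigma$. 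By construction $\sigma$ satisfies $C$ together with all of its derivatives and $H(\sigma)$ is nonzero near the expansion point, so $\sigma$ solves $\langle C\rangle_\Delta:H^\infty=\mathfrak p$; in particular $f(\sigma)=0$ for every $f\in\mathfrak p$.

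For the converse I would take $f\notin\mathfrak p$ and Ritt-reduce it modulo $C$, obtaining $H^b f\equiv\tilde f\pmod{\langle C\rangle_\Delta}$ with $\tilde f$ reduced with respect to $C$; since $C$ is a characteristic set of $\mathfrak p$, the polynomial $\tilde f$ is nonzero. Here the freedom in the analytic data is decisive: choosing the prescribed parametric series sufficiently general --- so that, together, they satisfy no nonzero differential polynomial relation over $K$ --- makes the evaluation $g\mapsto g(\sigma)$ injective on differential polynomials reduced with respect to $C$, whence $\tilde f(\sigma)\neq0$ and therefore $f(\sigma)\neq0$. This establishes that $\sigma$ is a generic zero of $\mathfrak p$ and completes the argument.

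The main obstacle is the second step, where the analysis lives: one must arrange the parametric initial data to be simultaneously \emph{algebraically generic} and \emph{analytically admissible} (convergent), and then invoke the convergence assertion of Riquier's theorem --- the genuinely hard input --- to know that the formal solution is holomorphic. Making the genericity precise also needs some care, since $K\{U\}$ is uncountable; this is handled in the classical references by descending to a countably generated subfield of $K$ carrying $C$ and realizing the required transcendental data there, but it is routine, as are the remaining ingredients (Ritt--Raudenbush, characteristic sets, Ritt reduction), which can be quoted from \cite{ritt32,raudenbush:nullstellen,kol:diffalg}.
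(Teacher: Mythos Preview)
The paper does not actually supply a proof of this theorem; it is stated in the appendix and immediately attributed to the classical references \cite{raudenbush:nullstellen,ritt32}. Your proposal is precisely the classical argument found in those references: Ritt--Raudenbush decomposition of $\sqrt I$ into primes, reduction to exhibiting a holomorphic generic zero of each prime component via a characteristic set, and invocation of Riquier's convergence theorem for the analytic step. So there is nothing to compare against, and your outline is the standard one.

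One point deserves tightening. You write that ``the system $C=0$ may be assumed passive (orthonomic)'', but passivity and orthonomicity are distinct hypotheses in the paper's formulation of Theorem~\ref{thm:riq}, and a characteristic set is in general neither linear nor monic in its leaders. The missing sentence is that at any expansion point where $H$ (the product of initials and separants) is nonzero, the holomorphic implicit function theorem lets you solve each $C_i=0$ locally for its leader, producing an equivalent orthonomic system with holomorphic right-hand sides to which Riquier applies. You already restrict to the locus $H\neq0$, so this is only a matter of saying it explicitly. The remaining delicate point---arranging the parametric data to be convergent yet avoid the countably many constraints coming from reduced polynomials over a suitable countable subfield---you have identified correctly, and it is handled exactly as you describe in the cited sources.
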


The concept of \emph{passivity} introduced by Riquier \cite{riq:edp} and
Janet \cite{mj:book} represents a differential algebraic version of
completeness or formal integrability.  For lack of space, we cannot recall
here all the required definitions, but refer to \cite{vpg:decomp} for a
modern presentation of the form in which it is used here. Riquier
\cite[Chapt.~VII, \textsection115]{riq:edp} showed how one can formulate
for a passive system an initial value problem (see
\cite[Sect.~9.3]{wms:invol} for a modern formulation of this construction)
admitting an existence and uniqueness theorem for holomorphic solutions.

\begin{theorem}[Riquier's Theorem]\label{thm:riq}
  Let $<$ be an orderly Riquier ranking. Then for a system of holomorphic
  differential equations which is orthonomic and passive with respect to
  $<$ the corresponding initial value problem possesses for holomorphic
  initial data locally a unique holomorphic solution.
\end{theorem}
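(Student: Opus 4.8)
The plan is to follow the classical majorant method of Riquier and Janet, in two stages: first constructing a unique formal power series solution, then establishing its convergence. First I would exploit orthonomicity to write the system in solved form, so that each equation expresses its leader $u^\alpha_\mu$ as a holomorphic function of the independent variables and of jet coordinates that are strictly lower with respect to $<$. This partitions all jet variables into \emph{principal} ones (the leaders and their derivatives) and \emph{parametric} ones (all the rest). The orderly Riquier ranking is exactly what makes this partition combinatorially stable: the parametric derivatives of each $u^\alpha$ are precisely the Taylor coefficients along the coordinate subspace singled out by the non-multiplicative variables, and the prescribed initial data fix exactly these. I would then determine the coefficients of a putative solution order by order, reading parametric coefficients off the initial data and computing each principal coefficient by differentiating the appropriate equation.

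The key point in this first stage is consistency: a given principal derivative may be reachable by prolonging several different equations, and one must check that all such computations agree. This is precisely where passivity enters, since the passivity (integrability) conditions are, by definition, the assertion that all these cross-computations coincide. Granting passivity, the recursion is well defined and determines every Taylor coefficient uniquely, yielding a unique formal power series solution $\hat{\sigma}$; the orderly property guarantees the induction descends strictly in order and therefore closes up.

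The hard part is convergence, which I would handle by majorants. I would replace each holomorphic right-hand side by a geometric-series majorant, say of the form $C\,(1 - \rho^{-1}(x^1 + \cdots + x^n + \textstyle\sum u^\alpha_\mu))^{-1}$, chosen so that its Taylor coefficients dominate those of the original in absolute value on a common polydisc. This yields a majorant initial value problem whose formal solution $\hat{\tau}$ has non-negative coefficients bounding those of $\hat{\sigma}$ termwise. Because the orderly ranking forces every principal derivative to be expressed through strictly lower-order data, the majorant system can be symmetrized so that its solution depends only on $t = x^1 + \cdots + x^n$, reducing it to an ordinary differential equation with an explicitly convergent holomorphic solution. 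Termwise domination then forces $\hat{\sigma}$ to converge on a polydisc, producing a genuine holomorphic solution $\sigma$; local uniqueness follows because any holomorphic solution has a Taylor expansion satisfying the same recursion and hence equal to $\hat{\sigma}$.

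I expect the main obstacle to be the majorant construction in the overdetermined, passive setting. One must verify that the majorant estimates respect the triangular structure induced by $<$, so that the recursion producing the principal coefficients of $\hat{\sigma}$ is genuinely dominated by that of $\hat{\tau}$, and one must check that passivity survives under majorization so that the majorant problem is itself consistent. Keeping track of the bookkeeping of multiplicative versus non-multiplicative variables through these estimates is the delicate step, and the orderly and Riquier hypotheses on $<$ are exactly the conditions that make the bounds close up.
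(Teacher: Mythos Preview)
Your outline is a faithful sketch of the classical Riquier--Janet majorant argument, and there is no substantive gap in the strategy you describe. However, you should be aware that the paper does \emph{not} actually give a proof of this theorem: it is stated as a classical result and attributed to Riquier \cite[Chapt.~VII, \textsection115]{riq:edp}, with a pointer to \cite[Sect.~9.3]{wms:invol} for a modern treatment. The surrounding text merely notes that passivity yields the formal power series solution (as detailed in Remark~\ref{rem:powerseries}) and that the orderly Riquier hypothesis then gives convergence. So there is nothing in the paper to compare your proposal against beyond this one-sentence summary, with which your two-stage plan (formal solution via passivity, then majorant convergence) is entirely consistent.
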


The assumption of passivity allows for the algorithmic construction of
formal power series solution for any ranking (see
Remark~\ref{rem:powerseries} below).  In the case of an orderly Riquier
ranking, one can then prove the convergence of this series obtaining the
above theorem. Orthonomic means that each equation can be solved in a
unique manner for its leader. Obviously, a general implicit differential
equation does not satisfy this condition.  For this reason, we need as in
the algebraic case the notion of a simple system permitting us the use of
Riquier's Theorem.

\begin{definition}\label{def:diffsimple}
  The differential system $S$ given by (\ref{eq:differentialsystem}) is
  \emph{simple} (with respect to a given ranking $<$), if the following
  conditions hold:
  \begin{enumerate}[(i),nosep]
  \item $S$ is simple as an algebraic system (in the finitely many jet
    variables $u^\alpha_\mu$ which actually occur in $S$ ordered according
    to $<$).\footnote{We consider here the independent variables $x^{i}$ as
      part of the coefficient field.  One should also note that if only the
      inclusion of these variables yielded an algebraically simple system,
      then $S$ would be differentially inconsistent.}
  \item $\{p_1,\dots,p_s\}$ is a passive system (for the Janet division).
  \item No leader of an inequation $q_j$ is an (iterated) derivative of the
    leader of an equation $p_k$.
  \end{enumerate}
\end{definition}

\begin{definition}
  A \emph{Thomas decomposition} of a differential system $S$ consists of
  finitely many simple differential systems $S_1, \ldots, S_k$ such that
  $\Sol{(S)}$ is the disjoint union of the solution sets
  $\Sol{(S_1)}, \ldots, \Sol{(S_k)}$.
\end{definition}

Thomas \cite{th:ds,th:sr} proved also in the differential case the
existence of such decompositions.  Again, it is possible to construct them
algorithmically by interweaving algebraic Thomas decompositions and the
Janet-Riquier theory \cite{bglr:thomasalg}.  The resulting algorithm is
implemented in \textsc{Maple} \cite{bl:thomasimpl,glhr:tdds}.

\begin{remark}\label{rem:powerseries}
  For a \emph{simple} differential system $S$ it is possible to construct
  systematically formal power series solutions.  Let $\ell$ be the maximal
  order of an equation or an inequation in $S$ and add to $S$ all partial
  derivatives of order at most $\ell$ of the equations in $S$.  Now we
  choose an expansion point $x_0=(x^1_0,\ldots,x^n_0)\in\CC^n$ such that
  all equations and inequations in $S$ are defined at $x = x_0$ and no
  initial and no separant vanishes for $x = x_0$. Hence, a formal power
  series solution is of the form
  $u^\alpha=\sum_{\mu\in\NN_{0}^n}c_\mu^\alpha\frac{(x-x_0)^\mu}{\mu!}$.
  We choose $c^{\alpha}_{\mu}\in\CC$ for all derivatives $u^{\alpha}_{\mu}$
  up to order $\ell$.  These choices must be performed in such a manner
  that after substituting $x$ by $x_{0}$ and all $u^{\alpha}_{\mu}$ by the
  corresponding constants $c^{\alpha}_{\mu}$ no initial or separant of an
  equation or inequation vanishes and all equations and inequations are
  satisfied.  If $u^{\alpha}_{\mu}$ is the leader of an equation, then only
  finitely many values are possible for $c^{\alpha}_{\mu}$.  If it is the
  leader of the derivative of an equation, then there is no freedom in
  choosing $c^{\alpha}_{\mu}$, as any differentiated equation is linear in
  its leader.  If $u^{\alpha}_{\mu}$ is the leader of an inequation, then
  all but finitely many values are possible for $c^{\alpha}_{\mu}$.  For
  all other jet variables $u^{\alpha}_{\mu}$ up to order $\ell$, the
  constants $c^{\alpha}_{\mu}$ can be chosen completely freely.
	
  The jet variables $u^{\alpha}_{\mu}$ of an order greater than $\ell$ can
  be partitioned into two disjoint sets.  For those which are not the
  derivative of the leader of an equation in $S$, the corresponding
  constant $c^{\alpha}_{\mu}$ can be chosen arbitrarily.  For all remaining
  ones the constants $c^{\alpha}_{\mu}$ are uniquely determined by some
  derived equations, which are quasi-linear. The properties of a simple
  differential system (in particular, the passivity) ensure that now the
  formal power series
  $u^\alpha=\sum_{\mu\in\NN_{0}^n}c_\mu^\alpha\frac{(x-x_0)^\mu}{\mu!}$
  with $1\leq\alpha\leq m$ define a solution of $S$ around $x_{0}$.
\end{remark}

\section{The Geometry of Differential Equations}
\label{sec:geojet}

Since the algebraic tools used in the algorithms developed in this work
require an algebraically closed field, we concentrate on complex
differential equations.  Thus in the sequel all manifolds\footnote{For us,
  manifolds have the same local dimension at every point and thus look
  locally like an open subset of some $\CC^{d}$ with a fixed $d$.} are
complex and all variables are to be understood as complex-valued.
Restricting to holomorphic sections, one can define jet bundles in the
familiar way and there are no changes with respect to the real theory
outlined in standard references like
\cite{kl:geode,pom:eins,sau:jet,wms:invol}.

The basic geometric setting is a fibred manifold $\pi:\E\rightarrow\X$
(i.\,e.\ $\pi$ is a surjective submersion).  The coordinates on the
base space $\X$ are the independent variables $x^{1},\dots,x^{n}$; the
fibre coordinates $u^{1},\dots,u^{m}$ represent the dependent
variables or unknown functions.  The $\ell$th order \emph{jet bundle}
$\je{\ell}$ consists of all Taylor polynomials of degree $\ell$.
Naturally induced coordinates on it are thus in addition all
derivatives of the $u^i$ up to order $\ell$; we use for them the usual
multi-index notation $u^{\alpha}_{\mu}$ where $\mu\in\Nn$ is a
multi-index of length $n$.  In the sequel, these natural coordinates are
called \emph{jet variables}.  For convenience, we identify
$\E=\je{0}$.

Functions are replaced in the geometric framework by (local)
sections:\footnote{For notational simplicity, we almost always omit the
  domain of definition $\U$ and use a seemingly global notation.  However,
  all statements in this work are of a local nature.} maps
$\sigma:\U\subseteq\X\rightarrow\E$ such that $\pi\circ\sigma=\id{\U}$.
Locally, any section can be written in the form
$\sigma(x)=\bigl(x,s(x)\bigr)$ with a local holomorphic function
$s:\CC^{n}\rightarrow\CC^{m}$.  Given a section $\sigma:\X\rightarrow\E$,
\emph{prolongation} yields a section of the jet bundle
$j_{\ell}\sigma:\X\rightarrow\je{\ell}$ which is locally defined by
$j_{\ell}\sigma(x)=\bigl(x,s(x),s_{x}(x),\dots,s_{x\cdots x}(x)\bigr)$,
thus we simply add all partial derivatives of the function $s$ up to order
$\ell$.

The jet bundles of different orders form a natural hierarchy of fibrations
via the canonical projections
$\pi^{\ell+k}_{\ell}:\je{\ell+k}\rightarrow\je{\ell}$ which ``forget'' the
higher order Taylor coefficients.  Of particular interest are the
projections $\pi^{\ell}_{\ell-1}$ by just one order, as $\je{\ell}$ is an
affine bundle over $\je{\ell-1}$ modelled on the vector bundle
$S_{\ell}(T^{*}\X)\otimes V\pi$ \cite[Prop.~2.2.6]{wms:invol}.  The
\emph{fundamental identification} provides an isomorphism between this
vector bundle and the vertical bundle
$V\pi^{\ell}_{\ell-1}=\ker{T\pi^{\ell}_{\ell-1}}$.  In addition, every jet
bundle is fibred over the base space by the canonical projection
$\pi^{\ell}:\je{\ell}\rightarrow\X$ mapping each Taylor polynomial to its
expansion point.  This last projection is very important in our context:
whenever we speak without further details of a transversal or a vertical
vector field, it refers to this fibration $\pi^{\ell}$.

A crucial geometric structure on the jet bundle $\je{\ell}$ is the
\emph{contact distribution} $\C_{\ell}\subset T(\je{\ell})$.  In local
jet coordinates, it is generated by the following vector fields:
\begin{equation}\label{eq:confields}
\begin{aligned}
C_{i}^{(\ell)} &= \partial_{x^{i}}+ 
\sum_{\alpha}u^{\alpha}_{i}\partial_{u^{\alpha}}+
\sum_{0<|\mu|<\ell}\sum_{\alpha}u^{\alpha}_{\mu+1_{i}}
\partial_{u^{\alpha}_{\mu}}
\qquad (1\leq i\leq n)\;,\\
C^{\mu}_{\alpha} &= \partial_{u^{\alpha}_{\mu}}\qquad\qquad 
(|\mu|=\ell, 1\leq\alpha\leq m)\;.
\end{aligned}
\end{equation}
The first $n$ fields are transversal to the fibration $\pi^{\ell}$ and
encode the chain rule, whereas the remaining fields span the vertical
bundle $V\pi^{\ell}_{\ell-1}$.  Intuitively, the contact distribution
encodes the different roles played by the three different kinds of
coordinates: independent variables, dependent variables, and derivatives.
One way to express this intuition formally is given by the following
well-known result.

\begin{proposition}\label{prop:contact}
  A section $\gamma:\X\rightarrow\je{\ell}$ of the $\ell$th jet bundle is a
  prolongation, i.\,e.\ of the form $\gamma=j_{\ell}\sigma$ for a section
  $\sigma:\X\rightarrow\E$, if and only if
  $T(\im{\gamma})\subseteq\C_{\ell}$.
\end{proposition}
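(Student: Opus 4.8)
The plan is to prove both implications by a direct computation in local jet coordinates; since the assertion is purely local, it suffices to work in a single adapted chart, and we write $\partial_i:=\partial/\partial x^i$ throughout. A section $\gamma$ of the fibration $\pi^{\ell}$ has, in jet coordinates, its first $n$ components equal to the independent variables, so we may write $\gamma(x)=\bigl(x,v^{\alpha}(x),(v^{\alpha}_{\mu}(x))_{0<|\mu|\leq\ell}\bigr)$. The image $\im{\gamma}$ is then an $n$-dimensional submanifold of $\je{\ell}$ whose tangent space at the point $\gamma(x)$ is spanned by the $n$ pushforwards $\gamma_{*}(\partial_{x^{i}})$, which read $\gamma_{*}(\partial_{x^{i}})=\partial_{x^{i}}+\sum_{\alpha}(\partial_{i}v^{\alpha})\,\partial_{u^{\alpha}}+\sum_{0<|\mu|\leq\ell}\sum_{\alpha}(\partial_{i}v^{\alpha}_{\mu})\,\partial_{u^{\alpha}_{\mu}}$, all derivatives being evaluated at $x$.

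First I would analyse when such a vector lies in the contact distribution. By \eqref{eq:confields}, a general element of $\C_{\ell}|_{\gamma(x)}$ has the form $\sum_{j}a^{j}\,C_{j}^{(\ell)}|_{\gamma(x)}+\sum_{|\mu|=\ell}\sum_{\alpha}b^{\alpha}_{\mu}\,C^{\mu}_{\alpha}|_{\gamma(x)}$. Comparing the coefficients of $\partial_{x^{j}}$ with those in $\gamma_{*}(\partial_{x^{i}})$ forces $a^{j}=\delta^{j}_{i}$, so that $\gamma_{*}(\partial_{x^{i}})\in\C_{\ell}|_{\gamma(x)}$ if and only if the difference $\gamma_{*}(\partial_{x^{i}})-C_{i}^{(\ell)}|_{\gamma(x)}$ lies in the vertical bundle $V\pi^{\ell}_{\ell-1}$ spanned by the $C^{\mu}_{\alpha}$ with $|\mu|=\ell$. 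Reading off the coefficients of $\partial_{u^{\alpha}}$ and of $\partial_{u^{\alpha}_{\mu}}$ for $0<|\mu|<\ell$ yields the chain-rule relations $v^{\alpha}_{1_{i}}=\partial_{i}v^{\alpha}$ and $v^{\alpha}_{\mu+1_{i}}=\partial_{i}v^{\alpha}_{\mu}$, whereas the coefficients of $\partial_{u^{\alpha}_{\mu}}$ with $|\mu|=\ell$ impose no condition, since they can be freely absorbed into the parameters $b^{\alpha}_{\mu}$. Hence $T(\im{\gamma})\subseteq\C_{\ell}$ is equivalent to these recursion relations holding for all $i$, $\alpha$ and all multi-indices $\mu$ with $0\leq|\mu|<\ell$.

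To finish the forward implication, I would argue by induction on $|\mu|$ that the recursions force $v^{\alpha}_{\mu}=\partial^{\mu}v^{\alpha}$ for every $|\mu|\leq\ell$; setting $s:=(v^{1},\dots,v^{m})$ and $\sigma(x):=(x,s(x))$ then exhibits $\gamma$ as $j_{\ell}\sigma$. For the converse, if $\gamma=j_{\ell}\sigma$ then by definition $v^{\alpha}_{\mu}=\partial^{\mu}s^{\alpha}$, the recursion relations hold automatically, and the computation above shows $\gamma_{*}(\partial_{x^{i}})=C_{i}^{(\ell)}|_{\gamma(x)}+\sum_{|\mu|=\ell}\sum_{\alpha}(\partial_{i}\partial^{\mu}s^{\alpha})(x)\,C^{\mu}_{\alpha}|_{\gamma(x)}\in\C_{\ell}|_{\gamma(x)}$, so $T(\im{\gamma})\subseteq\C_{\ell}$. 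There is no genuine obstacle here: the argument is an elementary coordinate computation, and the only point that deserves a word is the consistency of the over-determined recursion in the forward direction — the two routes to $v^{\alpha}_{\mu+1_{i}+1_{j}}$ agree precisely because mixed partial derivatives commute (Schwarz's theorem), and the locality of the statement means no gluing issue arises.
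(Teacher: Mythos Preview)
Your argument is correct and is the standard local-coordinate proof of this classical fact. Note, however, that the paper does not actually supply its own proof of this proposition: it is stated in the appendix as a ``well-known result'' and left without demonstration, so there is nothing to compare your approach against.
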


The following intrinsic definition of a differential equation does not
distinguish between scalar equations and systems.  It automatically
excludes the appearance of singularities as studied in this work.

\begin{definition}\label{def:gde}
  A \emph{differential equation} of order $\ell$ is a fibred submanifold
  $\Jc{\ell}\subseteq\je{\ell}$ such that the restriction of the canonical
  projection $\pi^{\ell}:\je{\ell}\rightarrow\X$ to the set $\Jc{\ell}$ is
  a surjective submersion.
\end{definition}

Also the notion of a solution can be easily expressed in an intrinsic
manner.  Note that the above definition of a differential equation does not
yet entail the existence of solutions, as it does not exclude hidden
integrability conditions which may lead to an inconsistency.

\begin{definition}\label{def:sol}
  A \emph{(classical) solution} of the differential equation
  $\Jc{\ell}\subseteq\je{\ell}$ is a section $\sigma:\X\rightarrow\E$ such
  that its prolongation satisfies $\im{j_{\ell}\sigma}\subseteq\Jc{\ell}$.
\end{definition}

Let $\sigma:\X\rightarrow\E$ be a classical solution of the differential
equation $\Jc{\ell}\subseteq\je{\ell}$.  Then, by definition,
$\im{j_{\ell}\sigma}\subseteq\Jc{\ell}$ is a smooth submanifold.  Hence, we
find at any point $\rho\in\im{j_{\ell}\sigma}$ that
$T_{\rho}(\im{j_{\ell}\sigma})\subseteq T_{\rho}\Jc{\ell}$.  Furthermore,
for any prolonged section
$T_{\rho}(\im{j_{\ell}\sigma})\subseteq\C_{\ell}|_{\rho}$ by
Proposition~\ref{prop:contact}.  Thus the tangential part of the contact
distribution restricted to $\Jc{\ell}$ may be considered as the space of
all infinitesimal solutions (or integral elements).

\begin{definition}\label{def:vessp}
  The \emph{Vessiot space}\footnote{In particular in the Russian
    literature, the terminology \emph{Cartan space} is more common.  We
    follow here the argumentation of Fackerell \cite{edf:vessiot} that
    Vessiot put a much stronger emphasis on the vector field side whereas
    Cartan prefered to work with differential forms.}
  $\V_{\rho}[\Jc{\ell}]$ of the differential equation
  $\Jc{\ell}\subseteq\je{\ell}$ at a point $\rho\in\Jc{\ell}$ is the set
  $\V_{\rho}[\Jc{\ell}]=T_{\rho}\Jc{\ell}\cap\C_{\ell}|_{\rho}$.  The
  family of all Vessiot spaces is briefly denoted by $\V[\Jc{\ell}]$.
\end{definition}

One should note that generally $\V[\Jc{\ell}]$ does \emph{not} define a
smooth regular distribution, as the dimension of the Vessiot spaces may
differ at different points on $\Jc{\ell}$.  It is a standard assumption in
the geometric theory (related to the notion of a \emph{regular}
differential equation) that this should not happen.

The fibration $\pi^{\ell}_{\ell-1}:\je{\ell}\rightarrow\je{\ell-1}$ allows
us to define at any point $\rho\in\je{\ell}$ the vertical space
$V_{\rho}\pi^{\ell}_{\ell-1}=\ker{T_{\rho}\pi^{\ell}_{\ell-1}}$.  We call
the vertical part of the Vessiot space at a point $\rho\in\Jc{\ell}$ the
\emph{symbol space}
$\Nc{\ell}{\rho}=\V_{\rho}[\Jc{\ell}]\cap V_{\rho}\pi^{\ell}_{\ell-1}$.  It
is not difficult to show that the Vessiot space can be decomposed as a
direct sum of linear subspaces,
$\V_{\rho}[\Jc{\ell}]=\Nc{\ell}{\rho}\oplus\H_{\rho}$, with some
$\pi^\ell$-transversal complement $\H_{\rho}$ which is not uniquely
determined.

The relationship between solutions and the Vessiot distribution is recalled
in the following well-known assertion (see e.\,g.\
\cite[Prop.~9.5.7]{wms:invol}).  One may say that the basic idea of
Vessiot's approach to differential equations consists of studying certain
subdistributions of the Vessiot distribution---which can to a large extent
be analysed by elementary linear algebra---instead of solutions themselves
(in \cite{wms:vessconn2} these subdistributions are called \emph{Vessiot
  connections}).

\begin{proposition}\label{prop:sol}
  Let the section $\sigma:\X\rightarrow\E$ be a solution of the
  differential equation $\Jc{\ell}\subseteq\je{\ell}$.  Then
  $T(\im{j_{\ell}\sigma})$ is an $n$-dimensional, $\pi^\ell$-transversal,
  involutive, smooth subdistribution of
  $\V[\Jc{\ell}]|_{\im{j_{\ell}\sigma}}$.  Conversely, let
  $\H\subseteq\V[\Jc{\ell}]$ be an $n$-dimensional, transversal,
  involutive, smooth subdistribution defined on some open subset of
  $\Jc{\ell}$.  Then any $n$-dimensional integral manifold of $\H$ (and
  such manifolds always exist by the Frobenius Theorem
  \cite[Thm.~C.3.3]{wms:invol}) is locally of the form
  $\im{j_{\ell}\sigma}$ for a solution $\sigma$ of $\Jc{\ell}$.
\end{proposition}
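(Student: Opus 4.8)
The plan is to prove both implications by assembling standard facts about jet bundles, using Proposition~\ref{prop:contact} as the crucial bridge between ``section whose image is tangent to $\C_{\ell}$'' and ``prolonged section''.

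For the forward implication, I would start from a solution $\sigma:\X\rightarrow\E$ of $\Jc{\ell}$. Locally $j_{\ell}\sigma$ is an embedding of $\X$ into $\je{\ell}$ (it is a section of $\pi^{\ell}$, hence realised as a graph), so $\im{j_{\ell}\sigma}$ is a smooth $n$-dimensional submanifold and $T(\im{j_{\ell}\sigma})$ is a smooth distribution along it; it is involutive simply because the tangent bundle of any submanifold is closed under the Lie bracket. Transversality with respect to $\pi^{\ell}$ follows from $\pi^{\ell}\circ j_{\ell}\sigma=\id{\X}$, which forces $T\pi^{\ell}$ to restrict to an isomorphism on each $T_{\rho}(\im{j_{\ell}\sigma})$. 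Finally, Proposition~\ref{prop:contact} gives $T(\im{j_{\ell}\sigma})\subseteq\C_{\ell}$, and $\im{j_{\ell}\sigma}\subseteq\Jc{\ell}$ (Definition~\ref{def:sol}) gives $T_{\rho}(\im{j_{\ell}\sigma})\subseteq T_{\rho}\Jc{\ell}$; intersecting the two inclusions yields $T_{\rho}(\im{j_{\ell}\sigma})\subseteq T_{\rho}\Jc{\ell}\cap\C_{\ell}|_{\rho}=\V_{\rho}[\Jc{\ell}]$.

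For the converse, let $\H\subseteq\V[\Jc{\ell}]$ be as stated on an open subset $\U\subseteq\Jc{\ell}$ and let $\N$ be an $n$-dimensional integral manifold, which exists through each point by the Frobenius Theorem since $\H$ is smooth and involutive; note $\N\subseteq\U\subseteq\Jc{\ell}$ automatically. The key step is to observe that the restriction $\pi^{\ell}|_{\N}:\N\rightarrow\X$ is a local diffeomorphism: at each $\rho\in\N$ one has $T_{\rho}\N=\H_{\rho}$, and transversality (meaning $\H_{\rho}\cap V_{\rho}\pi^{\ell}=0$) together with $\dim\H_{\rho}=n=\dim\X$ forces $T_{\rho}\pi^{\ell}|_{\N}$ to be an isomorphism, so the inverse function theorem presents $\N$ locally as the image of a section $\gamma:\U'\subseteq\X\rightarrow\je{\ell}$, namely $\gamma=(\pi^{\ell}|_{\N})^{-1}$. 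Since $T(\im{\gamma})=T\N=\H|_{\N}\subseteq\V[\Jc{\ell}]\subseteq\C_{\ell}$, Proposition~\ref{prop:contact} yields $\gamma=j_{\ell}\sigma$ for some section $\sigma:\U'\rightarrow\E$, and $\im{j_{\ell}\sigma}=\N\subseteq\Jc{\ell}$ then means $\sigma$ is a solution of $\Jc{\ell}$ in the sense of Definition~\ref{def:sol}.

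The argument is essentially a chain of elementary observations, so I do not expect a serious obstacle. The only point requiring a little care is the one just highlighted in the converse: turning the abstract integral manifold $\N$ into the graph of a section, which rests on the transversality-plus-dimension argument and the local invertibility of $\pi^{\ell}|_{\N}$. Once $\N$ is written as $\im{\gamma}$, Proposition~\ref{prop:contact} does all the remaining work.
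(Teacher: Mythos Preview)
Your argument is correct and is precisely the standard proof of this well-known fact. Note, however, that the paper does not actually supply its own proof of Proposition~\ref{prop:sol}: it merely recalls the statement as a known result and refers the reader to \cite[Prop.~9.5.7]{wms:invol}. Your write-up is essentially what one finds behind that citation, so there is nothing to compare against here.
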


Given a smooth function $\Phi:\je{\ell}\rightarrow\CC$, its \emph{formal
  derivative} with respect to the independent variable $x^{i}$ yields a
function $D_{i}\Phi:\je{\ell+1}\rightarrow\CC$ which can be conveniently
defined via the contact fields \eqref{eq:confields}:
\begin{equation}\label{eq:fderiv}
D_{i}\Phi=C_{i}^{(\ell)}(\Phi)+
\sum_{|\mu|=\ell}\sum_{\alpha=1}^{m}
C^{\mu}_{\alpha}(\Phi)u^{\alpha}_{\mu+1_{i}}
\end{equation}
where $\mu+1_{i}$ denotes the multi-index obtained by raising the $i$th
entry of $\mu$ by one.

Assume that $\Phi$ depends on some jet variables other than only the
independent variables $x^{i}$ and that $k\geq0$ is the maximal order of
these jet variables.  Then $D_{i}\Phi$ depends on jet variables up to order
$k+1$ and is always linear in those of the maximal order (and thus
quasi-linear).  Let $\P=\CC[\xi^{1},\dots,\xi^{n}]$ be a polynomial ring in
$n=\dim{\X}$ variables and $\rho\in\je{\ell}$ an arbitrary point.  We
define the \emph{principal part} of $\Phi$ at the point $\rho$ as the
polynomial vector
\begin{equation}\label{eq:ppart}
\pp_{\rho}{\Phi}=
\sum_{|\mu|=k}\sum_{\alpha=1}^{m}
\frac{\partial\Phi}{\partial u^\alpha_\mu}(\rho)\,
\xi^{\mu}\ev_{\alpha}\in\P^{m}
\end{equation}
where $\ev_{\alpha}$ denotes the standard basis vectors in the free module
$\P^{m}$ over the polynomial ring $\P=\CC[x^{1},\dots,x^{n}]$ whose rank is
the fibre dimension $m$ of $\E$.  Note that the entries of
$\pp_{\rho}{\Phi}$ are homogeneous polynomials of degree $k$ .

Locally, the differential equation $\Jc{\ell}$ may be considered as the
zero set of some functions $\Phi_{i}:\je{\ell}\rightarrow\CC$.  We choose a
point $\rho\in\Jc{\ell}$ and let $\ell_i\leq\ell$ be the maximal order of
jet variables effectively appearing in $\Phi_{i}$ and
$\fv_{i}=\pp_{\rho}{\Phi_{i}}\in\P^{m}$ its principal part at $\rho$. The
\emph{(reduced) principal symbol module} at the point $\rho$ is now the
$\P$-module $\M[\rho]=\langle\fv_{1},\dots,\fv_{s}\rangle$ spanned by all
the principal parts.  The degree $\ell$ component of this module can be
identified with the annihilator of the symbol space $\Nc{\ell}{\rho}$ (see
\cite[Rem.~7.1.18]{wms:invol}).

\bibliographystyle{amsplain}
\bibliography{GSADE}

\end{document}